\numberwithin{equation}{section}
\newtheorem{theorem}[equation]{Theorem}
\newtheorem{lemma}[equation]{Lemma}
\newtheorem{proposition}[equation]{Proposition}
\newtheorem{corollary}[equation]{Corollary}
\newtheorem*{theorem*}{Theorem}
\theoremstyle{definition}
\newtheorem{definition}[equation]{Definition}
\newtheorem{remark}[equation]{Remark}
\newtheorem*{notation}{Notation}
\newtheorem*{question*}{Question}
\theoremstyle{remark}
\newtheorem{example}[equation]{Example}
\DeclareMathOperator{\rank}{rank}
\DeclareMathOperator{\inn}{in}
\DeclareMathOperator{\gr}{gr}
\DeclareMathOperator{\Sym}{Sym}
\DeclareMathOperator{\rord}{rord}
\DeclareMathOperator{\Soc}{Soc}
\DeclareMathOperator{\Gr}{Gr}
\DeclareMathOperator{\GL}{GL}
\DeclareMathOperator{\Spec}{Spec}
\DeclareMathOperator{\Ann}{Ann}
\DeclareMathOperator{\Apolar}{Apolar}
\DeclareMathOperator{\Hilb}{Hilb}
\DeclareMathOperator{\Diff}{Diff}
\newcommand{\hatR}{\hat{R}}
\newcommand{\frakp}{\mathfrak{p}}
\newcommand{\mm}{\mathfrak{m}}
\newcommand{\cU}{\mathcal{U}}
\newcommand{\PP}{\mathbb{P}}
\newcommand{\KK}{\mathbb{K}}
\renewcommand{\AA}{\mathbb{A}}
\newcommand{\Gmult}{\mathbb{G}_{\mathrm{m}}}%
\newcommand{\HilbFunc}[2]{\Hilb_{#1}(\AA^{#2},0)}
\newcommand{\HilbFuncSm}[2]{\Hilb_{#1}^{\mathrm{sm}}(\AA^{#2})}
\newcommand{\HilbFuncGr}[2]{\Hilb^{ \Gmult}_{#1}(\AA^{#2},0)}
\newcommand{\hook}{\:\lrcorner\:}
\newcommand{\CommMat}[2]{C_{#2}(\mathbb{M}_{#1})}
\newcommand{\cA}{\mathcal{A}}
\newcommand{\X}[3]{\mathcal{X}^{#1}_{1,#2,#1,1;#3}}
\begin{document}
\title{Components of Hilbert Schemes of low degree and smoothable algebras}
\author{Maciej Ga{\l}\k{a}zka\thanks{Daegu-Gyeongbuk Institute of Science and Technology, 333 Technojungang-daero, Hyeonpung-eup, Dalseong-gun, Daegu, 42988, Republic of Korea, and Warsaw University of Life Sciences, Nowoursynowska 166, 02-787 Warszawa, Poland},
Hanieh Keneshlou\thanks{Department of Mathematics and Statistics, Universitätsstraße 10, 78464 Konstanz, Germany.},
Klemen Šivic\thanks{Faculty of mathematics and physics, University of Ljubljana, Jadranska 19, 1000 Ljubljana, Slovenia, and Institute of mathematics, physics and mechanics, Jadranska 19, 1000 Ljubljana, Slovenia}}
\maketitle
\begin{abstract}
    In this article, we describe the irreducible components of the Hilbert scheme of $d$ points on $\mathbb{A}^n$ for $d=9,10$. The main techniques we use are the variety of commuting matrices and analyzing loci of local algebras with a specific Hilbert function. We further prove that any finite local algebra of degrees $9,10$ and the socle dimension $2$ is smoothable. As the main consequence, we establish the equality of the cactus Grassmann and the secant Grassmann variety in the corresponding cases.
\end{abstract}
\tableofcontents
\section*{Introduction}
Hilbert schemes are fundamental objects in algebraic geometry parametrizing families of ideals in a polynomial ring. Since being introduced by Grothendieck, they are in the center of study in algebraic geometry, and they have paved the way for the construction of most moduli spaces.

Among others, the Hilbert scheme of points $\Hilb_d(\mathbb{A}^n) $ parametrizing zero-dimensional subschemes of $\AA^n$ of degree $d$, occupies a special place and has found connection to various research lines such as combinatorics \cite{Mar, Mar2}, enumerative geometry \cite{enu1,enu2}, string theory \cite{string} and to motivic homotopy theory \cite{Motiv}. It has a rich geometry to be still understood.
Although $\Hilb_d(\mathbb{A}^n)$ is smooth and irreducible for $n\leq 2$, very little is known about the components of $\Hilb_d(\AA^n)$ as $n$ grows. There is a distinguished component $\Hilb_d^{sm}(\mathbb{A}^n)$, the so-called smoothable component, whose general points parametrize $d$-tuples of distinct points. This is a generically smooth component of dimension $nd$, yet the Hilbert scheme of points can have components of excess dimension.

Following \cite{Iarrobino73}, a component is called elementary if it parametrizes subschemes supported at a single point. Elementary components are building blocks of all components of $\Hilb_d(\mathbb{A}^n)$, see \cite[Lemma 1]{Iarrobino73}. By results in \cite{ccvv}, it is known that $\Hilb_d(\mathbb{A}^n)$ is irreducible for $d\leq 7$, and $\Hilb_8(\mathbb{A}^n)$ is union of the smoothable and an elementary component for $n\geq 4$.

In this article, we determine the elementary components of the Hilbert scheme of points for $d=9,10$ and $n\geq 4$. As the main result, we prove the following theorem.
\begin{theorem}
  The only elementary components of $\Hilb_9(\AA^n)$ and $\Hilb_{10}(\AA^n)$ are the components parameterizing algebras with Hilbert function $(1,5,3)$ and $(1,6,3)$, respectively. 
\end{theorem}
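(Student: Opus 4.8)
\noindent\textit{Sketch of the approach.}
The plan is to reduce to the classification of Hilbert-function strata in the punctual Hilbert scheme, dispose of all strata but one by smoothability, and analyse the remaining stratum directly. By Iarrobino's results every irreducible component of $\Hilb_d(\AA^n)$ other than $\Hilb_d^{sm}(\AA^n)$ is elementary, and an elementary component is the closure of the locus of schemes whose local algebra is isomorphic, up to translation and a coordinate change, to a fixed local Artinian $\KK$-algebra $A$ of length $d$ and embedding dimension $e=\dim_\KK\mm_A/\mm_A^2\le n$; whether such a locus is a component is independent of the ambient $n\ge e$. So one works in $\Hilb_d(\AA^e)$, stratifies its punctual part by Hilbert function, and for each stratum $\cH_h$ (the locus of subschemes, with their translates, having local Hilbert function $h$) asks whether $\overline{\cH_h}$ is an irreducible component. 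Using Macaulay's growth bound together with $e\le d-1$ I would first list the finitely many Hilbert functions $h=(1,h_1,h_2,\dots)$ of length $d=9$ and $d=10$. The proof then has two parts: \emph{(B)} for every such $h$ other than $(1,5,3)$ when $d=9$ and $(1,6,3)$ when $d=10$, show $\cH_h\subseteq\overline{\Hilb_d^{sm}(\AA^e)}$; and \emph{(C)} show that these two exceptional strata are elementary components.

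For part \emph{(B)} I would sort the strata by socle dimension and by $e$. Algebras with $e\le 2$ are smoothable classically, and Gorenstein algebras of length $\le 13$ are smoothable by known results, which removes all Gorenstein strata. The socle-dimension-$2$ strata are treated by the auxiliary theorem of the paper: a length-$d$ local algebra with $e$ generators corresponds to an $e$-tuple of commuting nilpotent $d\times d$ matrices with a cyclic vector, and one shows that in the relevant range such tuples lie on the distinguished component of the variety of commuting matrices, hence represent smoothable algebras; the case $e=3$ can likewise be handled through irreducibility of $C_3(\mathbb{M}_d)$ in the range $d\le 10$. What is left is a finite list of strata with $e\ge 4$ and socle dimension $\ge 3$ — for $d=9$ essentially $(1,4,4)$, $(1,4,3,1)$ and a few with longer tails, for $d=10$ correspondingly $(1,5,4)$, $(1,6,2)$, $(1,4,4,1)$, $(1,5,3,1)$, and so on — and for each I would exhibit an explicit smoothing, typically by degenerating the general $A$ to or from a decomposable algebra $A'\times\KK^{k}$ or a stratum already known to be smoothable, or once more through the commuting-matrix model, and I would check by a dimension count that $\dim\cH_h<\dim\Hilb_d^{sm}(\AA^e)$. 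Carrying out this finite but long case analysis — above all, proving the socle-dimension-$2$ statement — is the principal obstacle.

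For part \emph{(C)}, set $h=(1,e,3)$ with $d=e+4$, so $e=5$ for $d=9$ and $e=6$ for $d=10$. A general member is $A=S/\Ann(W)$ with $S=\KK[x_1,\dots,x_e]$ and $W\subseteq S_2$ a general $3$-dimensional space of quadrics, and $\Ann(W)$ contains no linear form; hence in $\AA^n$ (for $n\ge e$) the stratum $\cH_h$ is irreducible of dimension $\dim\Gr\bigl(3,\binom{e+1}{2}\bigr)+e(n-e)+n$, which equals $6n+11$ for $d=9$ and $7n+18$ for $d=10$ (so $41$ and $60$ at $n=e$). The key step is to bound the Zariski tangent space: computing $\dim_\KK\Hom_S(\Ann W,A)$ for general $W$ by Macaulay duality for algebras with $\mm^3=0$ — exactly as in the known case $h=(1,4,3)$, $d=8$, possibly with computer assistance — one finds it equals $\dim\cH_h$, so a general $[A]$ is a smooth point of $\Hilb_d(\AA^n)$ and $\overline{\cH_h}$ is an irreducible component. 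It is distinct from the smoothable one: for $d=9$ because $\dim\cH_{(1,5,3)}=6n+11<9n=\dim\Hilb_9^{sm}(\AA^n)$ for all $n\ge 5$, and for $d=10$ because $7n+18<10n$ when $n\ge 7$, while for $n=6$, where $\dim\cH_{(1,6,3)}=60=6\cdot 10$, the component $\overline{\cH_{(1,6,3)}}$ is $60$-dimensional and all of its points are non-reduced, whereas the general point of $\Hilb_{10}^{sm}(\AA^6)$ is reduced. Since a general point of $\cH_h$ is supported at a single point, $\overline{\cH_h}$ is elementary, and together with part \emph{(B)} this proves the theorem.
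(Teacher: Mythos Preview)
Your sketch contains a structural error that breaks Part~(B). The opening claim that ``by Iarrobino's results every irreducible component of $\Hilb_d(\AA^n)$ other than $\Hilb_d^{sm}(\AA^n)$ is elementary'' is false: Iarrobino's lemma says only that every component is \emph{built from} elementary ones, not that every non-smoothable component is itself elementary. For $d=9,10$ and $n\ge 4$ the Hilbert scheme has components such as $T^n_{(1,4,3)\bullet}$ and $T^n_{(1,4,3)\bullet\bullet}$ whose general point is the disjoint union of a length-$8$ scheme with Hilbert function $(1,4,3)$ and one or two reduced points; these are neither elementary nor equal to the smoothable component.

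Because of this, your target in Part~(B) --- proving \emph{smoothability} of every stratum other than $(1,5,3)$ and $(1,6,3)$ --- is unachievable. A concrete obstruction: a general local algebra with Hilbert function $(1,4,3,2)$ whose associated graded has inverse system of the shape $\langle x_1^3,x_2^3,q\rangle$ has socle dimension $3$, lies on the non-elementary component $T^4_{(1,4,3)\bullet\bullet}$, and is \emph{not} smoothable (a tangent-space computation excludes it from $\Hilb_{10}^{sm}(\AA^4)$). The correct, strictly weaker, target is to show that every such algebra lies on some \emph{non-elementary} component, i.e.\ admits a flat deformation to a scheme supported at more than one point. This is what the paper does, largely via the commuting-matrix model: one exhibits a one-parameter family $(A_i+\lambda X_i)_i$ inside $C_n(\mathbb{M}_d)$ with one matrix acquiring two distinct eigenvalues for $\lambda\neq 0$, without insisting that the family land in the principal component. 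Your Part~(C) is fine in outline; the paper shortcuts it by citing Shafarevich's theorem on the $(1,n,r)$ loci rather than computing tangent spaces directly.
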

This provides an answer to question VIII of \cite{openproblem}. See Subsection~\ref{subsection:mainTh} for a proof. In the following table, we summarize the data of the components of the Hilbert scheme for $8\leq d\leq 10$. We assume $n\geq 4$, as otherwise the Hilbert scheme is irreducible.
\vspace*{0.5cm}
\begin{small}
    \begin{center}
    \begin{tabular}{c c c c}\label{table1}
            \hfill 
            &  $d = 8$ & $d  = 9 $  & $d = 10$
            \\\toprule 
            $n = 4$ & $T_{sm},\ T_{(1,4,3)}^4$ & $T_{sm},\ T_{(1,4,3)\bullet}^4$  & $T_{sm}, \ T_{(1,4,3)\bullet\bullet}^4$  \\
            $n = 5$ & $T_{sm},\ T_{(1,4,3)}^5$ &$ T_{sm}, \ T_{(1,4,3)\bullet}^5,\ T_{(1,5,3)}^5$ & $T_{sm},\ T_{(1,4,3)\bullet\bullet}^5,\ T_{(1,5,3)\bullet}^5$ \\
            $n \ge 6$ & $T_{sm},\ T_{(1,4,3)}^n$ &$ T_{sm},\ T_{(1,4,3)\bullet}^n,\ T_{(1,5,3)}^n$ & $T_{sm}, \ T_{(1,4,3)\bullet\bullet}^n,\  T_{(1,5,3)\bullet}^n,\ T_{(1,6,3)}^n$  
        \end{tabular}\\\vspace{3mm}
            {Table~1. \small The components of $\Hilb_d(\mathbb{A}^n)$}
    \end{center}
\end{small}

In Table~1, $T_{sm}$ denotes the smoothable component. The symbol $T^{n}_H$ denotes an elementary component of $\Hilb(\AA^n)$, whose general point is the spectrum of a local algebra with Hilbert function $H$. The symbol $T^{n}_{H\bullet}$ denotes a non-elementary component of $\Hilb(\AA^n)$, whose general point is a union of the spectrum of a local algebra with Hilbert function $H$, and a point, and so on.

In Table~2, for every component, we give a point on it in the language of ideals. The ideals corresponding to $T^4_{(1,4,3)}, T^5_{(1,5,3)}, T^6_{(1,6,3)}$ are the main building blocks. See Subsections~\ref{subsection_socle} and \ref{Macaulay's Inverse System} for the explanation of the symbol $\Ann$.

\begin{small}
    \begin{center}
    \begin{tabular}{c c c c}
            \hfill 
              \text{component} & \text{ideal} \\
            \toprule 
              $T_{(1,4,3)}^4$ & $\Ann(x_1 x_2, x_3 x_4, x_1 x_3 + x_2 x_4)$ \\
              $T_{(1,4,3)}^5$ & $\Ann(x_1 x_2, x_3 x_4, x_1 x_3 + x_2 x_4,x_5)$ \\
              $T_{(1,4,3)}^6$ & $\Ann(x_1 x_2, x_3 x_4, x_1 x_3 + x_2 x_4,x_5,x_6)$ \\
              $T_{(1,4,3)\bullet}^4$ & $\Ann(x_1 x_2, x_3 x_4, x_1 x_3 + x_2 x_4) \cap (\alpha_1 -\lambda_1,\ldots,\alpha_4 - \lambda_4)$ \\
              $T_{(1,4,3)\bullet}^5$ & $\Ann(x_1 x_2, x_3 x_4, x_1 x_3 + x_2 x_4,x_5)\cap (\alpha_1 - \lambda_5,\ldots,\alpha_5-\lambda_5)$  \\
              $T_{(1,4,3)\bullet}^6$ & $\Ann(x_1 x_2, x_3 x_4, x_1 x_3 + x_2 x_4,x_5,x_6)\cap (\alpha_1-\lambda_1,\ldots, \alpha_6 - \lambda_6)$  \\
              $T_{(1,4,3)\bullet\bullet}^4$ & $\Ann(x_1 x_2, x_3 x_4, x_1 x_3 + x_2 x_4) \cap (\alpha_1 -\lambda_1,\ldots,\alpha_4 - \lambda_4)\cap(\alpha_1-\mu_1,\ldots,\alpha_4 - \mu_4)$ \\
              $T_{(1,4,3)\bullet\bullet}^5$ & $\Ann(x_1 x_2, x_3 x_4, x_1 x_3 + x_2 x_4,x_5)\cap (\alpha_1 - \lambda_5,\ldots,\alpha_5-\lambda_5)\cap(\alpha_1-\mu_1,\ldots,\alpha_5-\mu_5)$  \\
              $T_{(1,4,3)\bullet\bullet}^6$ & $\Ann(x_1 x_2, x_3 x_4, x_1 x_3 + x_2 x_4,x_5,x_6)\cap (\alpha_1-\lambda_1,\ldots, \alpha_6 - \lambda_6)\cap(\alpha_1 - \mu_1,\ldots,\alpha_6 - \mu_6) $  \\
              $T^5_{(1,5,3)}$ & $\Ann(-\frac{1}{2}x_1^2 + x_1x_4 + x_2x_5, x_1x_3 +  x_3x_5, x_1x_2 + x_2x_3 +  x_4x_5)$ \\
              $T^6_{(1,5,3)}$ & $\Ann(-\frac{1}{2}x_1^2 + x_1x_4 + x_2x_5, x_1x_3 +  x_3x_5, x_1x_2 + x_2x_3 +  x_4x_5,x_6)$ \\
              $T^5_{(1,5,3)\bullet}$ & $\Ann(-\frac{1}{2}x_1^2 + x_1x_4 + x_2x_5, x_1x_3 +  x_3x_5, x_1x_2 + x_2x_3 +  x_4x_5)\cap(\alpha_1-\lambda_1,\ldots,\alpha_5 - \lambda_5)$ \\
              $T^6_{(1,5,3)\bullet}$ & $\Ann(-\frac{1}{2}x_1^2 + x_1x_4 + x_2x_5, x_1x_3 +  x_3x_5, x_1x_2 + x_2x_3 +  x_4x_5,x_6)\cap(\alpha_1-\lambda_1,\ldots,\alpha_6 - \lambda_6)$ \\
              $T^6_{(1,6,3)}$ & $\Ann(x_1x_4 + x_2x_5 +x_3 x_6, x_1x_3 + x_2 x_4 + x_3 x_5 + x_4 x_6, x_1 x_2 + x_2 x_3 + x_3 x_4 + x_4 x_5 + x_5 x_6)$
              
        \end{tabular} \\\vspace{3mm}
        {Table~2. \small Example of ideal of a point on each component in Table 1}
    \end{center}
\end{small}

We exploit the counterpart of the Hilbert scheme, the so-called variety of commuting matrices and the correspondence between their irreducible components worked out in \cite{JS} to show that the $n$-tuple $(A_1,\ldots,A_n)$ of $d\times d$ commuting matrices corresponding to each local algebra of a Hilbert function different from $(1,5,3)$ and $(1,6,3)$ comes as degeneration of tuples lying on a non-elementary component. 

In connection of the Hilbert scheme of points to so-called Grassmann secant and cactus varieties, parameterizing tensors and polynomials of bounded border rank, we analyze further the open subscheme of the Hilbert scheme, parameterizing algebras of socle dimension at most $2$, showing that this subscheme is inside the smoothable component. As a crucial consequence, we establish the irreducibility of Grassmann cactus variety of pencils, extending results in \cite{gmr}.

The organization of this article is as follows. In Section \ref{Pre} we collect important algebraic ingredients such as socle dimension, Macaulay's inverse system, and the general known results on Hilbert scheme of points and the variety of commuting matrices. Section \ref{DescriptionOfComponents} is devoted to determining the components of $\HilbFunc{H}{n}$ parameterizing local algebras with Hilbert functions $H=(1,n,3,2)$ and $H=(1,n,2,2,1)$, using the description of the components of the corresponding multigraded Hilbert schemes. This will be later used in Section \ref{Local1432}. In Section \ref{RayMethod}, exploiting the ray degeneration method, we prove that algebras with Hilbert functions of the form $(1,H_1,\ldots,H_c,1,\ldots, 1)$ and socle degree $ s\geq 2c$ are in a non-elementary component. Moreover, all those algebras of socle dimension $2$ lie on the smoothable component.
In Section \ref{Components}, we establish that all the components of $\Hilb_9(\mathbb{A}^n)$ and $\Hilb_{10}(\mathbb{A}^n)$ other than the components parameterizing algebras with Hilbert functions $(1,5,3)$ and $(1,6,3)$ are non-elementary, in particular the treated algebras of socle dimension $2$ are smoothable.  As an important follow-up of our results, in Section \ref{GrasCactus} we demonstrate that the Grassmann cactus variety of pencils in degree $9$ and $10$ is irreducible.

\section*{Acknowledgment}
We would like to thank Joachim Jelisiejew for valuable comments and remarks, and reminding us of the linkage technique elaborated in Remark \ref{linkage}. We are grateful to Tomasz Ma\'ndziuk whose comments improved the presentation of our article. We also thank Michele Graffeo and Joseph Landsberg for informing us about the exception $(1,5,4)$, remarked in \ref{exception}. The first author is supported by the project ``Global Basic Research Laboratory: Algebra and Geometry of Spaces of Tensors, and Applications'', RS-2024-00414849, awarded by the NRF of Korea, and also by the European Union under NextGenerationEU. PRIN 2022 Prot. n. 2022ZRRL4C\_004. The third author is partially supported by the Slovenian Research and Innovation Agency program P1-0222 and grants J1-3004, J1-60011 and J1-50001.

Views and opinions expressed are however those of the authors only and do not necessarily reflect those of the European Union or European Commission. Neither the European Union nor the granting authority can be held responsible for them.
\subsection*{Notation}
Here we summarize all the loci appearing throughout this article:\\

\begin{tabular}{c c}
    object & symbol\\
    \toprule
    The Hilbert scheme of $d$ points & $\Hilb_d(\mathbb{A}^n)$\\
    The smoothable component & $\Hilb_d^{sm}(\mathbb{A}^n)$\\
    The punctual Hilbert scheme & $\Hilb_d(\mathbb{A}^n, 0)$\\
    The locus of algebras with socle dimension $\leq \tau$ & $\Hilb_d^{\leq \tau}(\mathbb{A}^n)$\\
    The locus of homogeneous ideals & $\Hilb_d^{\Gmult}(\mathbb{A}^n,0)$\\
    The locus of homogeneous ideals with fixed Hilbert function $H$ &
    $\Hilb_H^{\Gmult}(\mathbb{A}^n,0)$\\
    The locus with fixed Hilbert function $H$ & $\HilbFunc{H}{n}$\\
    The locus of Gorenstein algebras &
    $\Hilb_d^{Gor}(\mathbb{A}^n)$
\end{tabular}

\section{Preliminaries}\label{Pre}
In this section, we collect facts and notation regarding the
Hilbert scheme, variety of commuting matrices, the Hilbert function, Macaulay's inverse systems and ray families. Throughout this article, we work over an
algebraically closed field $\KK$ of characteristic zero. 
\subsection{Socle and Hilbert function}\label{subsection_socle}
Let $\hat{R}=\mathbb{K}[[\alpha_1,\ldots,\alpha_n]]$ be the power series ring with the maximal ideal $\mathfrak{m}=(\alpha_1,\ldots,\alpha_n)$. Let $\cA=\hat{R}/I$ be a local Artin ring for an $\mm$-primary ideal $I\subset \hat{R}$,
with the unique maximal ideal $\mathfrak{n}=\mm/I$ and residue field $\mathbb{K}$.
Recall that an \textit{$\mm$-primary ideal} is just an ideal $I$ such that
$I\supset \mm^r$ for some $r$. Let $R= \KK[\alpha_1, \ldots ,\alpha_n]\subset \hat{R}$ be the polynomial ring. Since for every $r$ we have
canonically
\begin{equation}\label{eq:ringchange}
    \frac{\hat{R}}{\mm^r}  \simeq
    \frac{R}{(\alpha_1, \ldots,\alpha_n)^r},
\end{equation}
we could view $\mathcal{A}$ as a quotient of $R$ by an ideal $I$
satisfying $I\supset (\alpha_1, \ldots ,\alpha_n)^r$ for some $r$.
\textit{The socle} $\Soc(\cA)$ of $\cA$ is the annihilator of the maximal ideal in $\cA$.
The socle is a $\KK$-vector space. \textit{The socle dimension} of $\cA$ is
defined by $\tau(\cA):=\dim_{\KK}\Soc(\cA)$. The ring $\cA$ is
\textit{Gorenstein} if
$\tau(\cA)=1$, see ~\cite[Chapter~21]{EisView}. The \textit{associated graded ring of $\cA$}, denoted $\gr(\cA)$, is the vector space
$\bigoplus_{i\geq
0}\mathfrak{n}^i/\mathfrak{n}^{i+1}$ with natural multiplication. When $\cA
\simeq
\KK[\alpha_1, \ldots ,\alpha_n]/I$, where $I\supset (\alpha_1, \ldots ,\alpha_n)^r$, then $\gr(\cA)
\simeq \KK[\alpha_1, \ldots ,\alpha_n]/\inn(I)$, where $\inn(I)$ is the
ideal
generated by the smallest degree forms of elements of $I$.
We have
$\tau(\gr \cA) \geq \tau(\cA)$ and typically strict inequality occurs.
\begin{definition}
    The Hilbert function $H\colon\mathbb{N}\rightarrow \mathbb{N}$ of a local ring $\cA$
with a maximal ideal $\mathfrak{n}$ is defined by $H(i):=\dim_{\KK}
\mathfrak{n}^i/\mathfrak{n}^{i+1}$. It is the Hilbert function of its
associated graded ring.
\end{definition}
Let $s$ denote the largest integer such that $\mathfrak{n}^s\neq 0$, the
so-called \textit{socle degree} of $\cA$. The Hilbert function of $\cA$ can be
then represented by a vector $ H=(1,H(1),\ldots, H(s))$ or a series
$\sum_{i=0}^s H(i)T^i$. Moreover, since $\mathfrak{n}^{s}\subset \Soc(\cA)$, we
get $H(s)\leq \tau(\cA)$. We define the Hilbert function of an
$\cA$-module similarly.\\
Throughout this article, for an algebra with a given Hilbert function, we assume $H(1)$ is the embedding dimension of the corresponding scheme, so that the scheme is embedded in $\AA^{H(1)}$ and not in an affine space of higher dimension.
\subsection{Macaulay's Inverse System}\label{Macaulay's Inverse System}
Let $S=\KK[x_1,\ldots,x_n]$ be the polynomial ring with $n$ variables which we
view as a vector space.
Recall the rings $R = \KK[\alpha_1, \ldots ,\alpha_n]\subset \hat{R} = \KK[[\alpha_1, \ldots
,\alpha_n]]$.
The
ring $\hat{R}$ acts on $S$ by the partial derivation map $\lrcorner\colon\hat{R}\times S \rightarrow S$
defined as follows:
\[
    \alpha^\textbf{a}\lrcorner  x^\textbf{b}:=\begin{cases}
        \frac{\textbf{b}!}{(\textbf{b}-\textbf{a})!} x^{\textbf{b}-\textbf{a}} & \textbf{b}\geq  \textbf{a},\\
        0 & \mbox{otherwise},
    \end{cases}
\]
where $\textbf{a}=(a_1,\ldots,a_n)$ and $\textbf{b}=(b_1,\ldots,b_n)$
are vectors in $\mathbb{N}^n$, $\textbf{d}!  = \prod_{i=1}^n
d_i!$ for $\textbf{d} = (d_1,\ldots,d_n) \in \mathbb{N}^n$, and $\textbf{b}\geq \textbf{a}$ if  $b_i\geq
a_i$ for all $1\leq i\leq n$. With this action, $S$ can be viewed as an
$\hat{R}$-module. In fact, the partial derivation map is up to scalars equivalent to the
action of $\hat{R}$ on $S$ by contraction, see for
example~\cite[Appendix~A]{iakanev}, because we are over a field of
characteristic zero.
We will mostly work with the polynomial ring $R$ rather than the whole
$\hat{R}$, thus below we restrict to the $R$-action on $S$. This is mostly a
formal choice.
\begin{definition}
Let $I\subset R$ be an $(\alpha_1, \ldots ,\alpha_n)$-primary ideal of $R$. The
Macaulay \emph{inverse system of $I$} is
\[
    I^{\perp}=\{ g\in S: \ I\lrcorner g= 0\}.
\]
This is an $R$-submodule of $S$. Its generators are called
\emph{dual generators} of $I$.
Given a subset $E \subset
S$, the annihilator of $E$ is the ideal
\[
    \Ann_{R}(E)=\{f\in R: \ f\lrcorner E=0\},
\]
which is called the \textit{apolar ideal} of $E$.

We also define 
\begin{equation*}
    \Apolar_{R}(E) := R/\Ann_{R}(E).
\end{equation*}
\end{definition}

Note that $\Ann_{R}(E) = \Ann_{R}(M)$, where $M = R\lrcorner E$ is the
$R$-submodule
of $S$ generated by $E$. If $I\subseteq R$ is a homogeneous ideal, then
$I^{\perp}$ is spanned by homogeneous polynomials, and if $E$ is spanned by
homogeneous elements, then $\Ann_R(E)\subseteq R$ is homogeneous. Also, if $I\supset (\alpha_1, \ldots ,\alpha_n)^r
$,  then $I^{\perp} \subset S_{<r}$.
We will often abbreviate $\Ann_R(-)$ to $\Ann(-)$ when the ring is clear from
the context.\\

The above constructions are
justified by the following theorem due to Macaulay \cite{macauly}. 
\begin{theorem}[Macaulay's duality]
    For every $(\alpha_1, \ldots ,\alpha_n)$-primary ideal $I$ of $R$ and finitely generated
    $R$-submodule $M\subset S$ we have $\Ann_{R}(I^{\perp}) = I$ and
    $(\Ann_{R}(M))^{\perp} = M$. In this way, the operations $\perp$ and
    $\Ann_{R}(-)$ give
    an inclusion-reversing bijection between finitely generated $R$-submodules of
    $S$ and $(x_1, \ldots ,x_n)$-primary ideals of $R$. Moreover, the $R$-module
    $I^\perp$ is minimally generated by $\tau(R/I)$ elements.
    A zero-dimensional $\cA=R/I$ is Gorenstein of socle degree $s$ if and only
    if $I^\perp$ is a principal $R$-module generated by a polynomial of degree $s$.
\end{theorem}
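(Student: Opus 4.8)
The plan is to reduce everything to one observation about the apolarity action — that it induces a perfect $\KK$-bilinear pairing in each degree — and then proceed by routine bookkeeping. Write $\mm=(\alpha_1,\ldots,\alpha_n)\subseteq R$, and let $\langle f,g\rangle$ denote the constant term of $f\lrcorner g$. From $\alpha^{\textbf a}\lrcorner x^{\textbf b}=\tfrac{\textbf b!}{(\textbf b-\textbf a)!}x^{\textbf b-\textbf a}$ one gets $\langle\alpha^{\textbf a},x^{\textbf b}\rangle=\textbf a!\,\delta_{\textbf a,\textbf b}$, so $\langle\,\cdot\,,\,\cdot\,\rangle$ restricts, for every $r$, to a perfect pairing $R/\mm^r\times S_{<r}\to\KK$. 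The extra input is the adjunction $\langle fg,h\rangle=\langle g,f\lrcorner h\rangle$, immediate from $(fg)\lrcorner h=g\lrcorner(f\lrcorner h)$ and evaluation at $0$. First I would record the bookkeeping step: for an ideal $J\subseteq R$ with $\mm^r\subseteq J$ one has $J^{\perp}=\{h\in S_{<r}:\langle J,h\rangle=0\}$, i.e. $J^{\perp}$ is the orthogonal complement of $J/\mm^r$ inside $S_{<r}$. The inclusion $\subseteq$ is clear; for $\supseteq$, if $\langle J,h\rangle=0$ then $\langle g,f\lrcorner h\rangle=\langle gf,h\rangle=0$ for all $g\in R$, $f\in J$, whence $f\lrcorner h=0$ by nondegeneracy — exactly where being an ideal is used. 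In particular $\dim_{\KK}J^{\perp}=\dim_{\KK}R/J$. The same argument applied to an $R$-submodule $N\subseteq S$ (using $R\lrcorner N=N$) gives $\Ann_R(N)=\{f\in R:\langle f,N\rangle=0\}$.

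Now $\Ann_R(I^{\perp})=I$: for $I$ $\mm$-primary choose $r$ with $\mm^r\subseteq I$, so $I^{\perp}\subseteq S_{<r}$ and $\mm^r\subseteq\Ann_R(I^{\perp})$; by the last sentence above $\Ann_R(I^{\perp})/\mm^r$ is the orthogonal complement of $I^{\perp}$ in $R/\mm^r$, and since $I^{\perp}$ is itself the orthogonal complement of $I/\mm^r$, double orthogonality in the perfect pairing $R/\mm^r\times S_{<r}$ yields $\Ann_R(I^{\perp})=I$. For the reverse identity, let $M\subseteq S$ be a finitely generated submodule; its generators lie in some $S_{<r}$ and the action does not raise degrees, so $M\subseteq S_{<r}$ is finite-dimensional and $\Ann_R(M)\supseteq\mm^r$ is $\mm$-primary with $\Ann_R(M)/\mm^r=M^{\perp}$. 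Applying the bookkeeping step to $J=\Ann_R(M)$ gives $(\Ann_R(M))^{\perp}=(M^{\perp})^{\perp}=M$. These two identities show that $\perp$ and $\Ann_R(-)$ are mutually inverse bijections — manifestly inclusion-reversing — between the finitely generated (equivalently, finite-dimensional) $R$-submodules of $S$ and the $\mm$-primary ideals of $R$.

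It remains to count generators and read off the Gorenstein case. The perfect pairing $R/I\times I^{\perp}\to\KK$ together with $\langle gf,h\rangle=\langle f,g\lrcorner h\rangle$ shows that $h\mapsto(\bar f\mapsto\langle f,h\rangle)$ is an isomorphism of $R$-modules $I^{\perp}\cong\Hom_{\KK}(R/I,\KK)$. By Nakayama, the minimal number of generators of $I^{\perp}$ is $\dim_{\KK}I^{\perp}/\mm I^{\perp}=\dim_{\KK}\big(\Hom_{\KK}(R/I,\KK)\otimes_R\KK\big)$, and dualizing once more over $\KK$ identifies the latter with $\Hom_{\KK}(\Soc(R/I),\KK)$, of dimension $\tau(R/I)$; this is the asserted generator count. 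In particular $\tau(R/I)=1$ iff $I^{\perp}$ is principal, say $I^{\perp}=Rg$, and then $\mm^t\subseteq I\iff\mm^t\lrcorner I^{\perp}=0\iff\mm^t\lrcorner g=0\iff t>\deg g$, so the socle degree of $R/I$ — the largest $t$ with $\mathfrak n^t\neq 0$ — equals $\deg g$ (and every generator of $Rg$ has that same degree). This is the final assertion. The result is classical (it is Macaulay's theorem) and nothing in it is deep; the points that want a little care are the reduction in the bookkeeping step, where multiplying by all of $R$ lets one test $f\lrcorner h=0$ on constant terms only — precisely where the ideal/submodule hypothesis enters — and the Matlis-duality identification $\Hom_{\KK}(R/I,\KK)\otimes_R\KK\cong\Hom_{\KK}(\Soc(R/I),\KK)$, which should be run through the Artinian quotient $R/\mm^r$ since $R$ itself is not complete.
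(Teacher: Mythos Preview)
The paper does not supply a proof of Macaulay's duality: it is stated as a classical result and attributed to Macaulay with a citation, so there is nothing in the paper to compare your argument against. Your proof is correct and is essentially the standard modern proof via the perfect pairing $R/\mm^r\times S_{<r}\to\KK$; the two points that require care --- using the ideal/submodule condition to pass from vanishing of $\langle f,h\rangle$ to vanishing of $f\lrcorner h$, and identifying $I^{\perp}/\mm I^{\perp}$ with $\Soc(R/I)^\vee$ via Matlis duality over the Artinian quotient $R/I$ --- are both handled properly. One small remark: when you invoke Nakayama for the generator count, it is worth saying explicitly that $I^{\perp}$ is a module over the \emph{local} ring $R/I$ (since $\Ann_R(I^{\perp})=I$), as $R$ itself is not local and Nakayama over $R$ would need justification.
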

\begin{notation}
We say that a graded algebra $\cA$ is of socle type $1^{a_1}2^{a_2}\ldots s^{a_s}$ if the inverse system of $\cA$ is minimally generated by $a_i$ generators in each degree $i$, for $i=1,\ldots, s$.
\end{notation}
\subsection{Ray families}\label{subsection_ray_family}
In this subsection, we mainly follow \cite{casnati_jelisiejew_notari}.
\begin{definition}\label{iray}
    Let $I\subset \hatR$ be an ideal of finite colength and $\pi_i:\hatR\longrightarrow \mathbb{K}[[\alpha_i]]$ be the $i$-th projection map defined by $\pi_i(\alpha_j)=0$ for $j\neq i$ and $\pi_i(\alpha_i)=\alpha_i$. The $i$-th ray order of $I$ is a non-negative integer $\nu=\rord_i(I)$ such that $\pi_i(I)=(\alpha_i^\nu)$.
\end{definition}
Since $\KK[[ \alpha_i]]$ is a discrete valuation ring, and all its ideals are of the form $(\alpha_i^\nu)$ for some $\nu\geq 0$, the ray order is well-defined. Let $\mathfrak{p}_i$ denote the kernel of the map $\pi_i$, the ideal generated by all $\alpha_j$ for $j\neq i$. Note that the equality $\nu =\rord_i(I)$ in particular implies that $I$ contains an element of the form $\alpha_i^\nu-q$ where $q\in \mathfrak{p}_i$.
\begin{definition}\label{raydec}
    Let $I\subset \hatR$ be an ideal of finite colength. A ray decomposition of $I$ with respect to $\alpha_i$ is an ideal $J\subset \hatR$ such that $J\subset I\cap \frakp_i$ together with an element $q\in \frakp_i$ and $\nu\in \mathbb{N}$ such that $I=J+(\alpha_i^\nu-q)\hatR$.
\end{definition}
By Definition \ref{iray}, it follows that for every ideal $I$ and $i$, a ray decomposition of $I$ with $J=I\cap \frakp_i$ and $\nu=\rord_i(I)$ exists. Moreover, for every ray decomposition of $I$ we may assume that $q\in R$, as the ideals $I$ and $J$ are $\mathfrak{m}$-primary. One can associate two families of schemes with a ray decomposition of an ideal as follows:
\begin{definition}\label{rayfamily}
    Let $I=J+(\alpha_i^\nu-q)\hatR$ be a ray decomposition of an ideal of finite colength, and let $J_R=J\cap R$. 
    The associated lower ray family is 
    \begin{equation*}
  \mathbb{K}[t] \to \frac{R[t]}{J_R[t] + (\alpha_i^{\nu} - t \alpha_i- q) R[t]}\text{,}
\end{equation*}
and the associated upper ray family is \begin{equation*}
  \mathbb{K}[t] \to \frac{R[t]}{J_R[t] + (\alpha_i^{\nu} - t \alpha_i^{\nu-1} - q) R[t]}\text{.}
\end{equation*}
When it is flat, the lower (upper) ray family is called a lower (upper) ray degeneration.
\end{definition}
\subsection{Hilbert schemes}
In this subsection we recall known results on smoothability of algebras and irreducibility of Hilbert schemes that will be used in the proofs of our results. 

\begin{theorem}[\cite{ccvv}]\label{8points}
    The scheme $\Hilb_d(\mathbb{A}^n)$ is irreducible for $d\le 7$ and has exactly two components for $d=8$ and $n\ge 4$.
\end{theorem}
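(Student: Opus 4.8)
The plan is to combine the local-to-global reduction of Iarrobino with a stratification of the punctual Hilbert scheme by Hilbert function. By \cite[Lemma 1]{Iarrobino73} it suffices to understand the punctual Hilbert schemes: $\Hilb_d(\AA^n)$ is irreducible exactly when every length-$d$ local algebra $\cA = \KK[[\alpha_1,\dots,\alpha_n]]/I$ is smoothable, and an extra component appears as soon as some local algebra of length $\le d$ fails to be smoothable. Since for fixed $d$ the Hilbert function $H$ of $\cA$ takes only finitely many values, one argues stratum by stratum, using the embedding dimension $H(1)$ and the socle dimension $\tau(\gr\cA)$ to select the appropriate tool; via the correspondence between $\Hilb_d(\AA^n,0)$ and $n$-tuples of commuting nilpotent $d\times d$ matrices with a cyclic vector, each stratum can alternatively be turned into a statement about the variety of commuting matrices that is amenable to an explicit, possibly computer-assisted, check.

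For $d\le 7$ every stratum yields to one of three devices. If $H(1)\le 2$ then $\cA$ is a quotient of $\KK[[x_1,x_2]]$, hence a point of $\Hilb_d(\AA^2)$, which is smooth and irreducible by Fogarty's theorem, so $\cA$ is smoothable already inside $\AA^2\subseteq\AA^n$. If $\cA$ is Gorenstein then for these short lengths smoothability is classical (for instance through the codimension $\le 3$ structure theorems, or the smoothability of Gorenstein algebras of length $\le 13$). The remaining finitely many strata — embedding dimension $3$, or embedding dimension $\ge 4$ with socle dimension $\ge 2$, e.g. $(1,3,3)$, $(1,3,2,1)$, $(1,4,2)$, $(1,4,1,1)$, $(1,6)$ — are treated one at a time by exhibiting an explicit flat family (a ray degeneration as in Definition~\ref{rayfamily}, or a direct perturbation of the dual generators) that degenerates $\cA$ to a strictly shorter, already-treated algebra, and inducting on $d$. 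This proves irreducibility for $d\le 7$.

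For $d=8$ the genuinely new stratum is $H=(1,4,3)$. Fix a generic three-dimensional space $W\subseteq\Sym^2(\KK^4)$ of quadrics in four variables, put $\cA_W=\Apolar(W)$, of Hilbert function $(1,4,3)$ and length $8$ (which forces $n\ge 4$), and let $\mathcal H\subseteq\Hilb_8(\AA^n)$ be the locus of subschemes isomorphic to $\Spec\cA_W$ for generic $W$; being fibred over the Grassmannian of such $W$ and the affine positions, $\mathcal H$ is irreducible. The key computation is that of the tangent space $T_{[\Spec\cA_W]}\Hilb_8(\AA^n)=\Hom_{\cA_W}(I/I^2,\cA_W)$: one verifies that its dimension equals $\dim\mathcal H$, so $[\Spec\cA_W]$ is a smooth point of $\Hilb_8(\AA^n)$ and therefore lies on a unique component $T^n_{(1,4,3)}:=\overline{\mathcal H}$; since that common value is seen to differ from $8n=\dim\Hilb_8^{sm}(\AA^n)$, this point cannot lie on the smoothable component, so $\Hilb_8(\AA^n)$ has at least two components for $n\ge 4$, while for $n\le 3$ the previous paragraph still applies and it is irreducible. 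To exclude a third component one invokes \cite[Lemma 1]{Iarrobino73} once more: it suffices to show that every length-$8$ local algebra is smoothable or lies in $T^n_{(1,4,3)}$. The locus of graded algebras with Hilbert function $(1,4,3)$ is irreducible with $[\Spec\cA_W]$ as its generic point, hence is contained in $T^n_{(1,4,3)}$, and the non-graded algebras with this Hilbert function are checked directly to be smoothable or to specialize into $T^n_{(1,4,3)}$; every other length-$8$ Hilbert function — among them $(1,7)$, $(1,6,1)$, $(1,5,2)$, $(1,5,1,1)$, $(1,4,2,1)$, $(1,4,1,1,1)$, $(1,3,3,1)$, $(1,3,2,2)$, and the embedding-dimension-$\le 2$ ones — yields only smoothable algebras, again by the three devices above. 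Hence $\Hilb_8(\AA^n)$ has exactly two components for $n\ge 4$.

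The step I expect to be the main obstacle is the tangent-space computation at $[\Spec\cA_W]$ together with the evaluation of $\dim\mathcal H$: this is what actually detects the extra component, since it simultaneously certifies that $(1,4,3)$ is an elementary component and that it is distinct from the smoothable one, and without it there is no leverage on the problem. A secondary and more tedious difficulty is the ``no third component'' step, where no uniform smoothing is available: each of the finitely many borderline length-$8$ strata — notably the embedding-dimension-$\ge 4$, socle-dimension-$\ge 2$ cases such as $(1,4,2,1)$, the embedding-dimension-$3$ cases such as $(1,3,2,2)$, and the non-graded algebras with Hilbert function $(1,4,3)$ — must be smoothed by its own explicit flat family, so completeness of the list of Hilbert functions and correctness of each individual degeneration is where the real bookkeeping lies.
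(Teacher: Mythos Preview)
The paper does not prove this theorem: it is quoted from \cite{ccvv} in the preliminaries section (``In this subsection we recall known results\ldots''), with no argument given, and is used only as input to the paper's own results. So there is no ``paper's own proof'' to compare your proposal against.

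That said, your sketch is broadly the strategy of the original source \cite{ccvv}: reduce to local algebras, stratify by Hilbert function, smooth every stratum for $d\le 7$ by hand, and for $d=8$ show that the $(1,4,3)$ locus is a second component via a tangent-space (equivalently, trivial-negative-tangents) computation while smoothing all other strata. A few points of drift worth flagging. First, the commuting-matrices viewpoint you invoke is the machinery of the present paper, not of \cite{ccvv}; it is a legitimate alternative, but you are mixing toolkits. Second, the Gorenstein length $\le 13$ smoothability you cite postdates \cite{ccvv}, so cannot be used if you are reconstructing their proof. Third, your reduction ``$\Hilb_d(\AA^n)$ is irreducible exactly when every length-$d$ local algebra is smoothable'' via \cite{Iarrobino73} is not literally what that lemma says; the actual argument in \cite{ccvv} proceeds more directly through explicit deformations of dual socle generators rather than through an if-and-only-if criterion. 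Finally, your parenthetical that non-graded $(1,4,3)$ algebras might be ``smoothable or specialize into $T^n_{(1,4,3)}$'' is off: none of them are smoothable, and the point is rather that the whole $(1,4,3)$ stratum is irreducible, so once its generic point is shown to be a smooth Hilbert-scheme point off the smoothable component, the entire stratum lies in $T^n_{(1,4,3)}$.
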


\begin{theorem}[\cite{DJNT}]\label{11points}
    The scheme $\Hilb_d(\mathbb{A}^3)$ is irreducible for $d\le 11$.
\end{theorem}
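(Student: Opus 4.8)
The plan is to reduce the statement to a finite verification about local algebras and then dispatch the resulting strata. By Iarrobino's lemma \cite{Iarrobino73}, $\Hilb_d(\AA^3)$ is irreducible if and only if no $\Hilb_{d'}(\AA^3)$ with $d'\le d$ carries an elementary component, equivalently if and only if every finite local $\KK$-algebra $\cA=\KK[[\alpha_1,\alpha_2,\alpha_3]]/I$ of length $d'\le 11$ is smoothable (length $1$ being trivial). I would index such $\cA$ by their Hilbert function $H=(1,H_1,\dots,H_s)$ with $H_1\le 3$ the embedding dimension and $\sum_i H_i\le 11$. Macaulay's bound on the growth of Hilbert functions \cite{macauly}, together with these constraints, leaves only a short explicit list of admissible $H$, and it suffices to treat each stratum $\HilbFunc{H}{3}$ separately.

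The structurally easy strata come first. If $H_1\le 2$ then $\Spec\cA$ lies in $\AA^2$ and is smoothable because $\Hilb_{d'}(\AA^2)$ is smooth and irreducible. If the socle degree is $s\le 2$ then $H=(1,3,k)$ with $k\le 6$ and $d'\le 10$: for $k=6$ one checks directly that $\cA=\KK[\alpha_1,\alpha_2,\alpha_3]/\mm^3$ is forced, which is smoothable as a flat limit of ten general points, while the cases $k\le 5$ are covered by Theorem \ref{8points} for $d'\le 7$ and by a direct deformation for $k=4,5$. The Gorenstein strata ($\tau(\cA)=1$, with $H$ symmetric, e.g.\ $(1,3,3,1)$ and $(1,3,3,3,1)$) are smoothable by the known results on short Gorenstein algebras. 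Finally, the strata with $\tau(\cA)\le 2$ — Hilbert functions of the shape $(1,3,k,2)$, $(1,3,2,2,1,\dots)$ and the like — are placed on the smoothable component by the ray-degeneration arguments of Section \ref{RayMethod} together with the socle-dimension-$\le 2$ analysis, adapted from $d=9,10$ to $d\le 11$.

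What is left is a short list in two flavours. The long-tail functions $(1,H_1,\dots,H_c,1,\dots,1)$ with socle degree $s\ge 2c$ are dispatched directly by the ray-degeneration theorem of Section \ref{RayMethod}, which puts them on a non-elementary component. The remaining ``fat'' or compressed functions, such as $(1,3,4,3)$, $(1,3,4,2,1)$, or the non-Gorenstein models with Hilbert function $(1,3,3,3,1)$, where $\gr(\cA)$ is an a priori candidate for a new elementary component, I would attack in one of three ways: (i) show the graded locus $\Hilb_H^{\Gmult}(\AA^3,0)$ is irreducible and exhibit a single smoothable algebra in it, then transport smoothability to every $\cA$ with $\gr(\cA)$ of that type via a relative Macaulay inverse system or ray family in the sense of \cite{casnati_jelisiejew_notari}; (ii) compute, with computer algebra, that $\dim T_{[\cA]}\Hilb_{d'}(\AA^3)=3d'$ at a general point of the stratum, so that $[\cA]$ lies on a unique component of dimension $3d'$, necessarily $\Hilb^{sm}_{d'}(\AA^3)$; or (iii) via the correspondence of \cite{JS}, degenerate the commuting triple of $d'\times d'$ matrices attached to $\cA$ onto a triple lying on the non-elementary component, in the style of Section \ref{Components}.

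I expect these last cases to be the real obstacle. Smoothability is not a combinatorial property, so once $\cA$ fails to be a visible flat limit of split algebras one must produce an actual deformation, and near the boundary of the smoothable locus the governing obstruction computations (comparing $T^1$ with $T^2$, or the relevant $\mathrm{Ext}$ groups) are delicate and stratum-dependent. In practice the borderline Hilbert functions for $d=11$ in three variables are precisely where the soft arguments stop working and one is forced either into a substantial Macaulay2 computation or into an ad hoc geometric smoothing family; everything else is bookkeeping against Macaulay's bound and the cited small-length and Gorenstein smoothability results.
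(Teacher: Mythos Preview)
The paper does not prove Theorem \ref{11points}; it is stated as a citation of \cite{DJNT} and used as a black box throughout. There is therefore no ``paper's own proof'' to compare against.

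Your proposal is not a proof but an outline, and as written it is circular: you repeatedly invoke the ray-degeneration results of Section \ref{RayMethod} and the case analyses of Section \ref{Components}, but several of those results (e.g.\ Proposition \ref{smooth1nr1}, Proposition \ref{HF:1432}, and the proof of Theorem \ref{mainTh}) themselves \emph{use} Theorem \ref{11points} as an input to dispose of the embedding-dimension-$\le 3$ cases. Beyond the circularity, the sketch leaves exactly the hard part undone: you correctly flag the compressed strata such as $(1,3,4,3)$ or $(1,3,6,1)$ as ``the real obstacle'' and then offer three possible attack routes without carrying any of them out. That is precisely where the content of \cite{DJNT} lies, and it is not something one can recover by bookkeeping against Macaulay's bound plus the small-degree and Gorenstein smoothability results you cite.
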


\begin{theorem}[\cite{casnati_jelisiejew_notari}]\label{Gorenstein}
    All Gorenstein algebras of degree $d\le 13$ are smoothable.
\end{theorem}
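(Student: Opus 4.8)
The plan is to reduce the statement to local Gorenstein algebras, translate it via Macaulay's duality into a question about apolar algebras of single polynomials, and then carry out a finite case analysis organized by the socle degree. Since any finite $\KK$-algebra is a product of local algebras whose degrees sum to $d$, and a disjoint union of smoothable schemes is smoothable, it suffices to treat a local Gorenstein $\cA$ with $\dim_\KK\cA=d\le 13$; write $n$ for its embedding dimension and $s$ for its socle degree. By Macaulay's duality $\cA\cong\Apolar_R(F)=R/\Ann_R(F)$ for some $F$ of degree $s$ in $n$ variables, with $n=H(1)$. As $\gr\cA$ is graded, generated in degree $1$, with $H(0)=H(s)=1$ and $\sum_i H(i)=d\le 13$, only finitely many pairs $(n,s)$ and finitely many Hilbert functions arise---symmetric in the graded case, though one must also keep track of the possibly non-symmetric Hilbert function of $\gr\cA$ in general---and the theorem reduces to checking each.

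For $s\le 1$ the algebra is $\KK$ or $\KK[\alpha]/(\alpha^2)$, trivially smoothable. For $s=2$ we have $\cA=\Apolar(Q)$ with $Q$ a quadric of rank $r\le n$; after a linear change of coordinates $Q=x_1^2+\dots+x_r^2$, and $\cA$, of Hilbert function $(1,r,1)$, is classically a flat limit of $r+2$ reduced points. The first substantial case is $s=3$, cubic forms: the compressed stratum $(1,n,n,1)$ has $d=2n+2$, forcing $n\le 5$; the subcases $n\le 4$ have $d\le 10$ and follow from the corresponding known smoothability facts, so the crux is $n=5$, $d=12$. I would handle that stratum by first proving it irreducible---using the Buchsbaum--Eisenbud structure of codimension-$3$ Gorenstein ideals, or by exhibiting the generic such $F$ as apolar to a generic cubic and smoothing it through its Waring/cactus decomposition---and then combining ``the generic member is smoothable'' with irreducibility and closedness of the smoothable component to conclude the whole stratum lies in $\Hilb_d^{sm}(\AA^n)$. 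Non-compressed socle-degree-$3$ algebras either genuinely use fewer variables or have a Hilbert function ending in a run of $1$'s, handled next.

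When $s\ge 4$ the bound $\sum_i H(i)\le 13$ forces the Hilbert function to taper: away from a short central block it equals $1$, so $H=(1,H_1,\dots,H_c,1,\dots,1)$, and over the admissible vectors of sum $\le 13$ one verifies the inequality $s\ge 2c$ in the spread-out cases. For these I would apply the ray-family machinery of Subsection~\ref{subsection_ray_family}: pick a variable $\alpha_i$ realizing a large ray order $\nu=\rord_i(\Ann_R(F))$, take a ray decomposition $\Ann_R(F)=J+(\alpha_i^{\nu}-q)$, and run the associated lower or upper ray degeneration to exhibit $\cA$ as a flat limit of an algebra that either splits off a reduced point or has strictly smaller degree with the same shape. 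Iterating and bottoming out at degree $\le 7$, which is irreducible by Theorem~\ref{8points}, settles smoothability in this range.

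The difficulty is concentrated in the handful of ``fat middle'' strata where neither shortcut applies---the compressed cubic $(1,5,5,1)$ above, and the moderately compressed socle-degree-$4$ and $5$ cases such as $(1,4,3,4,1)$, $(1,3,b,3,1)$, $(1,2,3,2,1)$, $(1,2,3,3,2,1)$, together with their non-symmetric local analogues. There the socle degree is too small for a ray degeneration to lower $d$, so one must classify the dual generator $F$ up to the action of $\GL_n\ltimes(\text{translations})$ into finitely many normal forms and, for each, either recognize $\cA$ as a quotient or hyperplane section of an algebra already known to be smoothable, or exhibit an explicit one-parameter family smoothing it; checking flatness (constancy of $\dim_\KK$) along each such family is the lengthy but routine last step.
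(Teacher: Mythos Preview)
The paper does not prove this theorem at all: it is stated as a result of \cite{casnati_jelisiejew_notari} and used as a black box throughout (for instance in Proposition~\ref{Sm14221} and Corollary~\ref{s=r}). So there is no proof in the paper to compare your proposal against.

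That said, your outline is in the spirit of the cited reference --- reduce to local Gorenstein, dualize to a single polynomial, and stratify by socle degree, with the easy and tapering cases handled by ray-type degenerations and the compressed ones by explicit analysis. But as a proof it has a genuine gap precisely where all the content lies. The ``fat middle'' strata you list at the end (compressed $(1,5,5,1)$, and cases such as $(1,4,4,3,1)$, $(1,3,3,3,1)$, $(1,3,4,3,1)$, $(1,3,5,3,1)$, etc.) are not handled: you only announce a plan to classify dual generators into normal forms and then either embed into known smoothable algebras or write down explicit deformations. That is exactly the long, delicate part of \cite{casnati_jelisiejew_notari}; without carrying it out --- including the non-graded local algebras whose $\gr\cA$ has a non-symmetric Hilbert function, controlled via Iarrobino's symmetric decomposition rather than the naive symmetry you invoke --- the argument is incomplete. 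Also, your claim that for $s\ge 4$ one verifies $s\ge 2c$ ``in the spread-out cases'' is doing a lot of unexamined work: several degree-$13$ Hilbert functions fail this (e.g.\ $(1,4,3,4,1)$ has $s=4$, $c=3$), and you correctly push them to the final bin, but then that bin contains most of the theorem. In short: your reduction steps are fine, but the proposal ends by restating the hard cases rather than proving them.
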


\begin{proposition}[{\cite[Proposition 4.9]{ccvv}}]\label{Hf:1n1...1}
    Algebras with Hilbert functions $(1,n,1,\ldots ,1)$ are smoothable.
\end{proposition}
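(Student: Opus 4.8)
The plan is to argue by induction on the degree $d=\deg\cA$, equivalently on the socle degree $s$, since $d=n+s$. For the base of the induction I would take $d\le 7$, handled by Theorem~\ref{8points}, together with the case $s=1$: here $\cA\cong\hatR/\mm^2$ has Hilbert function $(1,n)$, and $\Spec\cA$ is the flat limit as $t\to 0$ of the reduced scheme $\{0,te_1,\dots,te_n\}\subset\AA^n$, whose ideal $(\alpha_i\alpha_j:i\ne j)+(\alpha_i^2-t\alpha_i:i)$ degenerates to $\mm^2$; so $\cA$ is smoothable for every $n$ in this case.

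For the inductive step ($s\ge 2$) I would first normalise coordinates. Since $\Char\KK=0$ and $\mathfrak n^s$ is one-dimensional and nonzero, a polarisation argument produces a linear form whose $s$-th power generates $\mathfrak n^s$; choose $\alpha_1$ to be such a form, so that $\bar\alpha_1^{\,k}$ generates $\mathfrak n^k$ for $2\le k\le s$. Replacing each $\alpha_j$ ($j\ge 2$) in turn by $\alpha_j$ minus a suitable multiple of $\alpha_1$, and then of $\alpha_1^{\,s-1}$, one arranges $\bar\alpha_1\bar\alpha_j=0$ for all $j\ge 2$; multiplying by $\bar\alpha_1$ then forces $\bar\alpha_i\bar\alpha_j\in\mathfrak n^s$ for $i,j\ge 2$. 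Hence $I$ is generated by the $\alpha_1\alpha_j$ ($j\ge 2$), by $\alpha_1^{\,s+1}$, and by elements $\alpha_i\alpha_j-d_{ij}\alpha_1^{\,s}$ ($i,j\ge 2$) for a symmetric matrix $(d_{ij})$; in particular $\mathfrak n^2=\bar\alpha_1^{\,2}\cA$ is a principal ideal, and $\rord_1(I)$ equals $s+1$ if all $d_{ij}=0$ and $s$ otherwise.

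The heart of the argument is to split off a single reduced point by an upper ray degeneration along $\alpha_1$. Set $\nu=\rord_1(I)$ and take the canonical ray decomposition $I=J+(\alpha_1^{\,\nu}-q)$ with $J=I\cap\mathfrak p_1$: here $q=0$ when all $d_{ij}=0$, and $q$ is a scalar multiple of some $\alpha_{i_0}\alpha_{j_0}$ with $i_0,j_0\ge 2$ otherwise, so that $q\in\mathfrak p_1$ and $\alpha_1 q\in J_R$ (because $\alpha_1\alpha_{i_0}\in J_R$). The associated upper ray family $\KK[t]\to R[t]/(J_R[t]+(\alpha_1^{\,\nu}-t\alpha_1^{\,\nu-1}-q)R[t])$ is flat by \cite{casnati_jelisiejew_notari} — this uses $\nu=\rord_1(I)$ — has special fibre $\Spec\cA$, and for $t\ne 0$ the relation $\alpha_1^{\,\nu-1}(\alpha_1-t)=q$ separates the fibre into a reduced point at $(t,0,\dots,0)$ and $\Spec\cA'$, where $\cA'=R/(J_R+(\alpha_1^{\,\nu-1}-q'))$ with $q'$ a scalar multiple of $q$. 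A length count yields $\deg\cA'=d-1$; when $\nu-1\ge 2$ the relation $\alpha_1^{\,\nu-1}-q'$ involves $\alpha_1$ only in degrees $\ge 2$, so $\cA'$ still has embedding dimension $n$ and, since then every $\bar\alpha_i\bar\alpha_j$ is proportional to $\bar\alpha_1^{\,\nu-1}$, Hilbert function $(1,n,1,\dots,1)$ of socle degree $s-1$; when $\nu-1=1$, which forces $\nu=s=2$, it eliminates $\alpha_1$ and $\cA'$ has Hilbert function $(1,n-1,1)$. In every case $\cA'$ is again of the form $(1,n',1,\dots,1)$ and has strictly smaller degree, so it is smoothable by induction (or is a base case); then $\Spec\cA'\sqcup\{\mathrm{pt}\}$ is smoothable, and being the general fibre of the above flat family it exhibits $\Spec\cA$ as a limit of reduced schemes, i.e.\ on the smoothable component.

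The step I expect to be the main obstacle is verifying that the residual algebra $\cA'$ has precisely the asserted Hilbert function — most delicately, that $\bar\alpha_1^{\,\nu-1}\ne 0$ in $\cA'$, which I would get from flatness (were it zero, all $\bar\alpha_i\bar\alpha_j$ would vanish and the length of $\cA'$ would drop below $d-1$) rather than by explicit computation — and confirming the flatness of the upper ray family, for which one relies on the choice $\nu=\rord_1(I)$ and the results of \cite{casnati_jelisiejew_notari}. If one wishes to avoid the flatness of the upper ray family, the lower ray family serves the same purpose, at the cost of a somewhat less transparent induction in which the residual algebra has embedding dimension $n-1$.
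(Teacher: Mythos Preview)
The paper does not give its own proof of this proposition; it simply imports the result from \cite[Proposition~4.9]{ccvv}. What you have written is, up to presentation, the special case $c=1$ of the paper's own Theorem~\ref{rayflat} in Section~\ref{RayMethod}: normalise so that $\bar\alpha_1$ generates $\mathfrak n^2$, take the upper ray decomposition along $\alpha_1$, and peel off a single reduced point to drop the socle degree by one. Your ideal-side normalisation $\alpha_1\alpha_j\in I$ and $\alpha_i\alpha_j-d_{ij}\alpha_1^s\in I$ is exactly what the paper obtains on the dual side by writing the inverse system as $\langle x_1^{(s)}+g,\,W\rangle$ with $\alpha_1\lrcorner g=0$ and $W$ consisting of linear forms in $x_2,\dots,x_n$.

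Your argument is correct. The one place where the justification is thin is the appeal to \cite{casnati_jelisiejew_notari} for flatness: this does not follow merely from $\nu=\rord_1(I)$, and that reference does not provide it as a black box. What actually makes the upper ray family flat here is your normalisation $\alpha_1\alpha_j\in J$ for $j\ge 2$, which (together with $\alpha_1 q\in J$, as you note) gives $\gamma\cdot(\alpha_1^{\nu}-t\alpha_1^{\nu-1}-q)\in J[t]$ for every $\gamma\in\mathfrak p_1$; one then invokes \cite[Proposition~2.12 and Remark~2.13]{casnati_jelisiejew_notari} exactly as in the paper's Proposition~\ref{ref:flat:prop}. Your handling of the boundary case $\nu=s=2$, where the linear relation $\alpha_1^2-\lambda\alpha_1-q$ pushes $\bar\alpha_1$ into $\mathfrak n^2$ and the residual has Hilbert function $(1,n-1,1)$ rather than $(1,n)$, is also correct and worth keeping explicit.
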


\begin{proposition}[{\cite[Propositions 4.11, 4.12 and remark on the top of page 774]{ccvv}}]\label{Hf:1n21}
    Algebras with Hilbert functions $(1,n,2,1)$ are smoothable.
\end{proposition}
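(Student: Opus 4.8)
The plan is to use Macaulay's duality to exhibit an arbitrary local $\KK$-algebra $\cA$ with Hilbert function $(1,n,2,1)$ as an iterated fibre product over $\KK$ of local algebras each embedded in at most two variables, and then to prove that such fibre products are smoothable. Since $H(3)=1$, the inverse system $I^\perp\subset S$ is minimally generated by a single cubic $F=F_3+(\text{lower order terms})$ together with minimal generators of degree at most $2$; since $\dim(I^\perp)_2=2$, a short analysis (inspecting $R_1\hook F_3$) shows that after a linear change of coordinates exactly one of the following holds: (a) $F_3$ has two essential variables $x_1,x_2$, there is no quadric generator, and the $n-2$ linear generators may be taken to be $x_3,\dots,x_n$; or (b) $F_3=x_1^3$ up to derivatives, there is exactly one quadric generator $q$ (with at most two essential variables), and $n-2$ linear generators.

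In either case, clearing the lower order terms of $F$ by a further coordinate change and splitting off the linear generators as independent coordinate directions, one obtains a decomposition
\[
  \cA\;\cong\;\cA_1\times_\KK\cA_2\times_\KK\cdots\times_\KK\cA_r,
\]
where the factors occupy pairwise disjoint sets of variables and each $\cA_j$ is a local algebra in at most two variables: in case (a), the apolar algebra of a binary cubic (of degree $6$) together with $n-2$ copies of $\KK[\alpha]/(\alpha^2)$; in case (b), at most one apolar algebra of a binary quadric, at most one copy of $\KK[\alpha_1]/(\alpha_1^4)$, and copies of $\KK[\alpha]/(\alpha^2)$, where if $q$ shares its essential variable with $F$ the two are merged into a single binary factor. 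Each such factor is smoothable, since the Hilbert scheme of points on $\AA^2$ is smooth and irreducible (and a fortiori by Theorem~\ref{11points}). The delicate bookkeeping here is to verify, for inhomogeneous $\cA$, that the lower order terms of $F$ (or of $q$) cannot absorb one of the split-off coordinate directions — which would force $H(1)<n$ — so that the displayed decomposition really is a fibre product in disjoint variables.

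It remains to prove the gluing lemma: if $B\subset\KK[x_1,\dots,x_b]$ and $C\subset\KK[y_1,\dots,y_c]$ are smoothable local algebras supported at the respective origins, then $B\times_\KK C\subset\KK[x,y]$ is smoothable; iterating this then finishes the proof. The key observation is that a smoothable $\Spec B$ admits a \emph{pinned} smoothing, i.e.\ a flat family over a smooth curve whose general fibre is reduced and contains the origin: the incidence variety $\Phi=\{([Z],p):p\in Z\}$ over $\Hilb_{\deg B}^{sm}(\AA^b)$ has an open dense subset (the pairs with $Z$ reduced) isomorphic to an open subset of $\Sym^{\deg B-1}(\AA^b)\times\AA^b$, hence $\Phi$ is irreducible, so $([\Spec B],0)\in\Phi$ is a limit of pairs $([Z_s],p_s)$ with $Z_s$ reduced and $p_s\to 0$; translating by $-p_s$ yields the pinned smoothing $Z'_s\to\Spec B$ with $0\in Z'_s$. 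Now place pinned smoothings $Z'_s$ of $B$ inside $\AA^b\times\{0\}$ and $W'_s$ of $C$ inside $\{0\}\times\AA^c$. For general $s$ these are reduced and meet precisely in the single reduced point at the origin, so $Z'_s\cup W'_s$ is reduced of length $\deg B+\deg C-1=\deg(B\times_\KK C)$; passing to scheme-theoretic closures over $s=0$, the flat limit of $Z'_s\cup W'_s$ contains both $\Spec B\times\{0\}$ and $\{0\}\times\Spec C$, hence contains $\Spec(B\times_\KK C)$, and since the lengths agree the containment is an equality. Thus $B\times_\KK C$ is a flat limit of reduced zero-dimensional schemes, i.e.\ smoothable.

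The part I expect to be the main obstacle is making the reduction of the first two paragraphs rigorous for \emph{all} local algebras with Hilbert function $(1,n,2,1)$ — not only graded or generic ones — together with the scheme-theoretic points in the gluing lemma: that $Z'_s\cup W'_s$ is reduced at the origin (using that the origin is a reduced isolated point of each of $Z'_s$, $W'_s$ and that the two families lie in transverse coordinate subspaces), and that constancy of length propagates the equality $\Spec(B\times_\KK C)=\lim(Z'_s\cup W'_s)$ to the central fibre.
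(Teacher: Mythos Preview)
The paper does not give its own proof of this statement; it is simply cited from \cite{ccvv}. Your proposal, however, has a genuine gap that goes beyond the bookkeeping issue you flag at the end: the fibre-product decomposition is false for the Gorenstein algebras in this family, and no amount of care will repair it.

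Take $F = x_1^2 x_2 + x_3^2 + \cdots + x_n^2$ for any $n\ge 3$. Then $\cA = \Apolar(F)$ has Hilbert function $(1,n,2,1)$ and leading form $F_3 = x_1^2 x_2$ depending on two variables, so this lands in your case~(a). But $I^\perp$ is generated by $F$ \emph{alone}: each $x_i$ with $i\ge 3$ is already $\tfrac12\,\alpha_i\hook F$, so there are no separate linear generators to split off, and $\tau(\cA)=1$. On the other hand, for any nontrivial fibre product of local $\KK$-algebras one has $\Soc(B\times_\KK C)=\Soc(B)\oplus\Soc(C)$ inside $\mathfrak{m}_B\times\mathfrak{m}_C$, hence $\tau(B\times_\KK C)=\tau(B)+\tau(C)\ge 2$. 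So $\cA$ is not a fibre product over $\KK$ of smaller local algebras, and no coordinate change produces one: the step ``clearing the lower order terms of $F$'' is precisely where the argument breaks, because those quadratic tails are what weld the extra variables to the binary cubic and force the socle to be one-dimensional. Your gluing lemma is correct and handles the algebras of socle dimension $\ge 2$, but the Gorenstein stratum needs an entirely separate argument; in \cite{ccvv} it is treated directly via explicit deformations (their Proposition~4.11), not by any splitting.
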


\begin{proposition}[{\cite[Proposition 4.13]{ccvv}}]\label{Hf:1n22}
    Algebras with Hilbert functions $(1,n,2,2)$ are smoothable.
\end{proposition}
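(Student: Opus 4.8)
\noindent\emph{Proof strategy.} As this is a cited result, I only outline a possible proof. The plan is to induct on the embedding dimension $n$. Write $\cA=\hatR/I$ with maximal ideal $\mathfrak{n}$, so that $\mathfrak{n}^4=0$, $\lng\cA=n+5$, and the embedding dimension is exactly $n$; the Macaulay bound $H(2)\le\binom{H(1)+1}{2}$ forces $n\ge2$. The base cases are $n=2$, where $\lng\cA=7$ so $\cA$ is smoothable by Theorem~\ref{8points}, and $n=3$, where $\Spec\cA\subset\AA^3$ has length $8\le11$, hence is smoothable by Theorem~\ref{11points}.

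For the inductive step ($n\ge4$) I would first record the multiplicative structure of $\gr\cA$: multiplication induces a pencil of quadratic forms $q\colon\mathfrak{n}/\mathfrak{n}^2\times\mathfrak{n}/\mathfrak{n}^2\to\mathfrak{n}^2/\mathfrak{n}^3$ with two-dimensional target, together with a surjection $\mathfrak{n}/\mathfrak{n}^2\otimes\mathfrak{n}^2/\mathfrak{n}^3\twoheadrightarrow\mathfrak{n}^3$. Using this, after a suitable linear change of coordinates I would pick a variable $\alpha_i$ of small ray order $\nu=\rord_i(I)$ and form the ray decomposition $I=J+(\alpha_i^{\nu}-q)\hatR$ with $J=I\cap\frakp_i$ and $q\in\frakp_i$, as in Definition~\ref{raydec}. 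Next I would verify, via \cite{casnati_jelisiejew_notari}, that the associated lower ray family of Definition~\ref{rayfamily} is flat; its general fibre is then $\Spec\cB\sqcup\{\,(\nu-1)\ \text{reduced points}\,\}$, where $\cB=\hatR/(J+\alpha_i\hatR)$ has embedding dimension $\le n-1$ and length $n+5-(\nu-1)$. One then checks that the Hilbert function of $\cB$ lies in a short list already known to be smoothable: $(1,n-1,2,2)$ (inductive hypothesis), $(1,m,2,1)$ (Proposition~\ref{Hf:1n21}), $(1,m,1,\dots,1)$ (Proposition~\ref{Hf:1n1...1}), or a Hilbert function of length $\le8$ in at most $3$ variables (Theorems~\ref{8points},~\ref{11points}). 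Hence $\cB$, and therefore $\Spec\cB\sqcup\{\text{points}\}$, is smoothable; since the smoothable locus is closed and $\cA$ is a limit of such schemes, $\cA$ is smoothable.

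The hard part will be the choice of the ray variable and the control of the residual Hilbert function. One must show that, possibly after a linear change of coordinates, some coordinate $\alpha_i$ has ray order $\nu\le3$, and — crucially — that the sibling algebra $\cB$ does \emph{not} fall into the non-smoothable regime, e.g.\ Hilbert functions $(1,m,3)$ with $m\ge4$, which the rest of this article shows to carry elementary components. The delicate configuration is when the pencil $q$ has trivial common radical: then no variable is annihilated by $\mathfrak{n}$, the coordinate ray orders are $3$ or $4$, and I would expect to need two successive ray degenerations — first lowering the socle degree to reach an algebra whose Hilbert function ends in $1$, then applying Proposition~\ref{Hf:1n21} — while tracking the Hilbert function through each step, with the linkage technique of Remark~\ref{linkage} kept in reserve for sporadic cases. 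The Gorenstein shortcut of Theorem~\ref{Gorenstein} is not available, since $\tau(\cA)\ge2$.
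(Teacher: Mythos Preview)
The paper does not give a proof of this proposition: it is quoted from \cite[Proposition~4.13]{ccvv} and used as a black box. There is therefore no proof in the present paper to compare your outline against.

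Your sketch is a plausible strategy within the framework of this paper, but it is anachronistic relative to the original source --- the ray-degeneration machinery of \cite{casnati_jelisiejew_notari} and Theorem~\ref{11points} from \cite{DJNT} both postdate \cite{ccvv}, so the original argument necessarily proceeds by other means. As an outline in the present setting your approach could in principle be made to work, but as you yourself acknowledge, the control of the residual Hilbert function in the ``delicate configuration'' is not carried out, and your description of the general fibre of the lower ray family (as $\Spec(\hatR/(J+\alpha_i\hatR))$ together with $\nu-1$ reduced points) would itself need justification: the fibre structure depends on $J$ and $q$, and flatness is not automatic.
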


\begin{theorem}[{\cite[Proposition 4.10]{ccvv}, \cite[Problem XVI]{openproblem}, \cite{Shafarevich}]}]\label{Shafarevich}
    Algebras with Hilbert functions $(1,n,r)$ are smoothable for $r\le 2$, while they form a generically reduced elementary component of $\Hilb_{1+n+r}(\mathbb{A}^n)$ if $3\le r\le \frac{(n-1)(n-2)}{6}+2$, with the possible exception of $(1,5,4)$.
\end{theorem}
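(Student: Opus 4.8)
We split the statement into the smoothability claim for $r\le 2$ and the component claim for $3\le r\le\frac{(n-1)(n-2)}{6}+2$. For $r\le 2$ the relevant Hilbert functions are $(1,n,1)$, covered by Proposition~\ref{Hf:1n1...1}, and $(1,n,2)$, whose smoothability follows from the analysis in \cite{ccvv}: a general such algebra is defined by a pencil of quadrics in a normal form exhibiting it as a flat limit of reducible zero-dimensional schemes. The substance of the theorem, and the part I will outline, is that the algebras with $H=(1,n,r)$ and $3\le r\le\frac{(n-1)(n-2)}{6}+2$ form a generically reduced elementary component of $\Hilb_d(\AA^n)$ with $d=1+n+r$.

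First I would identify the relevant locus. Since $H$ has socle degree $2$, any $\cA=R/I$ with Hilbert function $H$ satisfies $\mm^3\subseteq I$; and since the embedding dimension of $\cA$ equals $n$, the ideal $I$ contains no element of order $1$, so $I=\langle I\cap R_2\rangle+\mm^3$ is a \emph{graded} ideal after translating the support of $\cA$ to the origin. By Macaulay duality such graded ideals are exactly the $\Ann_R(W)$ with $W$ in the open subscheme $U\subseteq\Gr(r,S_2)$ of those subspaces whose first partial derivatives span $S_1$. Hence the locus $Z_H\subseteq\Hilb_d(\AA^n)$ of these algebras is the image of $\AA^n\times U$ under the injective morphism sending $(p,W)$ to the translate by $p$ of $\Ann_R(W)$; it is therefore irreducible with
\[
\dim Z_H=n+\dim\Gr(r,S_2)=r\binom{n+1}{2}-r^2+n.
\]
Moreover every point of $Z_H$ parametrizes a scheme supported at a single point, so $Z_H$ is elementary, and it meets $\Hilb_d^{sm}(\AA^n)$ in at most a proper closed subset, the general member of the smoothable component being reduced.

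The heart of the argument is to show that for a general $W\in U$, with support at $0$, the algebra $\cA=R/I$ with $I=\Ann_R(W)$ satisfies $\dim_\KK T_{[\cA]}\Hilb_d(\AA^n)=\dim Z_H$; then $\overline{Z_H}$ is a generically reduced component, and, being a component different from $\Hilb_d^{sm}(\AA^n)$, it is not contained in it, so the algebras are non-smoothable. I would compute $T_{[\cA]}\Hilb_d(\AA^n)=\Hom_R(I,\cA)$ via the grading $\Hom_R(I,\cA)=\bigoplus_{j}\Hom_R(I,\cA)_j$, in three steps. (i) In the stated range the multiplication $R_1\otimes I_2\to R_3$ is surjective for general $W$ (a numerical check, using that $\Ann_R(W)_2$ is then a general subspace of $R_2$ of dimension $\binom{n+1}{2}-r$), so $I$ is generated in degree $2$, and hence $\Hom_R(I,\cA)_j=0$ for $j\ge 1$ because $\cA_{\ge 3}=0$. (ii) $\Hom_R(I,\cA)_0$ is the tangent space at $[\cA]$ of the graded Hilbert scheme $\HilbFuncGr{H}{n}$, which is isomorphic to $\Gr(r,S_2)$ and hence smooth of dimension $r\binom{n+1}{2}-r^2$. (iii) $\Hom_R(I,\cA)_{-1}$ is exactly $n$-dimensional, spanned by the infinitesimal translations, and $\Hom_R(I,\cA)_{\le -2}=0$. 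Adding (i)--(iii) gives $\dim T_{[\cA]}\Hilb_d(\AA^n)=r\binom{n+1}{2}-r^2+n=\dim Z_H$; since $\overline{Z_H}$ is then a component whose general point is non-reduced, it is a generically reduced elementary component and is not the smoothable one.

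The main obstacle is step (iii). Controlling $\Hom_R(I,\cA)_{-1}$ and $\Hom_R(I,\cA)_{-2}$ amounts to showing that the multiplication and syzygy maps built from $I_2$, its space of linear syzygies, and $\cA_1,\cA_2$ attain their generic rank for a general choice of $r$ quadrics, and that no negative-degree tangent vector beyond the $n$ translations survives. The bound $r\le\frac{(n-1)(n-2)}{6}+2$ is precisely the numerical threshold guaranteeing this: beyond it the negative-degree part of $\Hom_R(I,\cA)$ becomes strictly larger than $n$, so $\overline{Z_H}$ fails to be maximal, and in fact the algebras become smoothable. The value $(1,5,4)$ is genuinely borderline --- there a coincidence of secant-variety type (the exception recorded in~\ref{exception}) produces extra, integrable tangent directions, so a general algebra with that Hilbert function is actually smoothable and must be excluded; pinning down this case and certifying the threshold in general is the delicate point of the proof.
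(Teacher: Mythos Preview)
The paper does not prove this theorem; it is quoted from the literature (\cite{ccvv}, \cite{Shafarevich}) in Section~\ref{Pre} as background, so there is no ``paper's proof'' to compare against. Your outline is, in broad strokes, the strategy of Shafarevich's original argument: parametrize the locus $Z_H$ by $\AA^n\times\Gr(r,S_2)$, compute $\dim Z_H$, and then show that for general $[I]$ the tangent space $\Hom_R(I,\cA)$ has the same dimension by splitting it into graded pieces.

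That said, the proposal has gaps and a few inaccuracies. First, step~(iii) is the entire content of the theorem and you do not prove it; you only name it and assert that the bound $r\le\frac{(n-1)(n-2)}{6}+2$ is ``precisely the numerical threshold''. Establishing that $\Hom_R(I,\cA)_{-1}$ is exactly $n$-dimensional for general $W$ requires a nontrivial analysis of the linear syzygies of a generic space of quadrics, and this is where Shafarevich's work lies; a reader cannot reconstruct it from your description. Second, your step~(i) is not needed to kill $\Hom_R(I,\cA)_{\ge 1}$ (that vanishes for degree reasons alone, since $I_{\ge 2}$ maps into $\cA_{\ge 3}=0$), but generation in degree~$2$ \emph{is} needed to control $\Hom_R(I,\cA)_{-1}$, so the logical dependence is misstated. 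Third, the sentence ``a component whose general point is non-reduced'' is a slip; you mean the general point is smooth on $\Hilb_d(\AA^n)$, hence the component is generically reduced.

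Finally, two factual points. The exception $(1,5,4)$ is not excluded because the algebras are known to be smoothable from Shafarevich's work; rather, his lemmas assume $n\ne 5$ (see Remark~\ref{exception}), so his method simply does not apply. That these algebras are in fact smoothable is a \emph{new} result of the present paper (Proposition~\ref{HF:144,154}). And your claim that ``beyond [the threshold] \dots\ the algebras become smoothable'' is not part of the theorem and is not known in general; the theorem is one-directional.
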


\begin{remark}\label{exception}
    The main theorem of \cite{Shafarevich} is stated without the exception $(1,5,4)$, but note that it is mentioned in the introduction of \cite{Shafarevich}, and that all preliminary lemmas needed to prove \cite[Theorem 2]{Shafarevich} assume $n\ne 5$. See also \cite[Remark 7.2]{GGGL25}.
\end{remark} 

\subsection{Variety of commuting matrices}
We recall the relation between varieties of commuting matrices and Hilbert schemes. We refer to \cite{HJ,JS} for details.

Let $\mathbb{M}_d$ denote the set of all $d\times d$ matrices and let
$$\CommMat{d}{n}=\{(A_1,\ldots ,A_n)\in \mathbb{M}_d^n:\ A_iA_j=A_jA_i\,\  \mathrm{for}\, \mathrm{all}\  i,j\}$$
be the variety of $n$-tuples of $d\times d$ commuting matrices. More precisely, we define $\CommMat{d}{n}$ as the scheme defined by quadratic equations that entry-wise describe commutativity.

Let $\cU=\CommMat{d}{n}\times \KK^d$. We say that a point $(A_1,\ldots ,A_n,v)\in \cU$ is stable if $v$ generates $\KK^d$ as a $\KK[A_1,\ldots ,A_n]$-module. In this case we also say that $v$ is a cyclic vector for the $n$-tuple $(A_1,\ldots ,A_n)$. The set of stable tuples in $\cU$ is denoted by $\cU^{\mathrm{st}}$, and it is an open subscheme of $\cU$.

The vector space $\KK^d$ becomes a module over $R=\KK[\alpha_1,\ldots ,\alpha_n]$ for the action defined by $\alpha_i\cdot u:=A_iu$. If $(A_1,\ldots ,A_n,v)\in \cU^{\mathrm{st}}$, then the map $f\mapsto f(A_1,\ldots ,A_n)\cdot v$ is an $R$-module homomorphism $R\to \KK^d$. Moreover, it is surjective, as the tuple $(A_1,\ldots ,A_n,v)$ is stable. Let $I$ be the kernel of this map. Then $\KK^d$ is isomorphic to $R/I$ as an $R$-module, while $\KK[A_1,\ldots ,A_n]$ is isomorphic to $R/I$ as an algebra.

\begin{lemma}[{\cite[Proposition 3.7]{JS}}]
    The map $(A_1,\ldots ,A_n,v)\mapsto R/I$ gives a morphism of schemes $\cU^{\mathrm{st}}\to \Hilb_d(\mathbb{A}^n)$. Moreover, $\cU^{\mathrm{st}}$ is a principal $\GL_n$-bundle over $\Hilb_d(\mathbb{A}^n)$.
\end{lemma}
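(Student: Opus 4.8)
The plan is to construct the morphism via the functor of points of $\Hilb_d(\AA^n)$, and then to identify $\cU^{\mathrm{st}}$ with the bundle of frames of the universal rank-$d$ sheaf. First, over $\cU^{\mathrm{st}}$ there is a tautological tuple $\widetilde A_1,\ldots,\widetilde A_n$ of $d\times d$ matrices and a tautological vector $\widetilde v$; since the $\widetilde A_i$ commute, letting $\alpha_i$ act on $\OO_{\cU^{\mathrm{st}}}^{\oplus d}$ through $\widetilde A_i$ makes it a sheaf of $\OO_{\cU^{\mathrm{st}}}[\alpha_1,\ldots,\alpha_n]$-algebras, and $1\mapsto\widetilde v$ extends to a homomorphism $\phi\colon\OO_{\cU^{\mathrm{st}}}[\alpha_1,\ldots,\alpha_n]\to\OO_{\cU^{\mathrm{st}}}^{\oplus d}$. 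One checks that stability is an open condition: a tuple $(A,v)$ is stable iff the vectors $m(A)v$ for monomials $m$ of degree $<d$ span $\KK^d$ (the cyclic submodule generated by $v$ stabilises in at most $d$ steps, since all intermediate dimensions are $\le d$), i.e.\ iff some $d\times d$ minor of the matrix of these vectors is nonzero; and that on this locus $\phi$ is fibrewise surjective, hence surjective as a map of sheaves by Nakayama. Its kernel $\mathcal{I}$ is then a sheaf of ideals whose quotient is $\OO_{\cU^{\mathrm{st}}}^{\oplus d}$, locally free of rank $d$ over $\OO_{\cU^{\mathrm{st}}}$, hence $\OO_{\cU^{\mathrm{st}}}$-flat and finite; it cuts out a closed subscheme $Z\subset\AA^n\times\cU^{\mathrm{st}}$, finite and flat of degree $d$ over $\cU^{\mathrm{st}}$. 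By the universal property of $\Hilb_d(\AA^n)$ this family defines a morphism $\cU^{\mathrm{st}}\to\Hilb_d(\AA^n)$, which on closed points sends $(A_1,\ldots,A_n,v)$ to $\Spec(R/I)$ with $I=\Ker\!\big(R\to\KK^d,\ f\mapsto f(A)v\big)$, as wanted.

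For the bundle statement, let $Z^{\mathrm{univ}}\subset\AA^n\times\Hilb_d(\AA^n)$ be the universal family and $\mathcal{F}=\pi_*\OO_{Z^{\mathrm{univ}}}$ its pushforward, a locally free $\OO_{\Hilb_d(\AA^n)}$-module of rank $d$ with a distinguished global section (the image of $1$), and let $\mathrm{Fr}(\mathcal{F})$ be its frame bundle, the principal $\GL_d$-bundle whose fibre over $[I]$ is the set of $\KK$-bases of $R/I$. The claim is that $\cU^{\mathrm{st}}\cong\mathrm{Fr}(\mathcal{F})$ over $\Hilb_d(\AA^n)$, equivariantly for the natural $\GL_d$-actions, where $\GL_d=\GL(\KK^d)$ acts on $\cU^{\mathrm{st}}$ by simultaneous conjugation of the $A_i$ together with its tautological action on $v$. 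Indeed, a point $(A,v)\in\cU^{\mathrm{st}}$ over $[I]$ determines an isomorphism $R/I\xrightarrow{\sim}\KK^d$, namely $\bar f\mapsto f(A)v$ (well-defined and bijective since its kernel is $I$, the dimensions agree, and $v$ is cyclic), hence a basis of $R/I$; conversely a basis of $R/I$, viewed as an isomorphism $R/I\xrightarrow{\sim}\KK^d$, recovers the $A_i$ as the matrices of multiplication by $\alpha_i$ and $v$ as the preimage of $\bar 1$ (note that $\bar 1$ is automatically cyclic). Carrying out these constructions in families gives mutually inverse $\GL_d$-equivariant morphisms between $\cU^{\mathrm{st}}$ and $\mathrm{Fr}(\mathcal{F})$ over $\Hilb_d(\AA^n)$. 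Finally, the $\GL_d$-action on $\cU^{\mathrm{st}}$ is free (if $g$ fixes $(A,v)$ then $g$ commutes with every $A_i$ and fixes $v$, so it fixes the spanning set $\{m(A)v\}$ and equals $\mathrm{id}$) and the map to $\Hilb_d(\AA^n)$ is invariant since $f(gAg^{-1})(gv)=g\,f(A)v$; combined with the isomorphism $\cU^{\mathrm{st}}\cong\mathrm{Fr}(\mathcal{F})$ and the Zariski-local triviality of $\mathrm{Fr}(\mathcal{F})$ (as $\mathcal{F}$ is locally free), this exhibits $\cU^{\mathrm{st}}$ as a principal $\GL_d$-bundle over $\Hilb_d(\AA^n)$.

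The main work is in the first paragraph: verifying that $\cU^{\mathrm{st}}$ is precisely the open locus where $\phi$ is surjective and that the quotient sheaf is locally free of rank $d$, so that the universal property of $\Hilb_d(\AA^n)$ applies; after that the bundle assertion is essentially formal through the frame-bundle identification. The only remaining point to confirm is that the two mutually inverse correspondences of the second paragraph are genuine morphisms of schemes, which holds because all the linear algebra involved (products of the tautological matrices, solving for the coordinates of $\bar 1$, inverting a change-of-basis matrix) is regular in the input data — equivalently, because both $\cU^{\mathrm{st}}$ and $\mathrm{Fr}(\mathcal{F})$ represent the same functor on $\Hilb_d(\AA^n)$-schemes, that of choosing a trivialisation of the universal rank-$d$ algebra.
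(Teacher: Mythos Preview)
The paper does not prove this lemma; it merely cites it from \cite[Proposition~3.7]{JS}, so there is no proof here to compare against. Your argument is correct and is the standard one: build a flat family over $\cU^{\mathrm{st}}$ to invoke the universal property of the Hilbert scheme, then identify $\cU^{\mathrm{st}}$ with the frame bundle of the pushforward of the universal family.

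Two remarks. First, the statement as printed in the paper has a typo: the group should be $\GL_d$, not $\GL_n$. You have (correctly) proved the $\GL_d$ version, and this is consistent with the dimension formula in the corollary immediately following, $\dim \mathcal{Z}^H = d - d^2 + \dim \mathcal{Z}^C$, since the fibre of $\cU^{\mathrm{st}}\to\Hilb_d(\AA^n)$ then has dimension $d^2 = \dim\GL_d$ (the extra $d$ coming from the factor $\KK^d$ in $\cU$). Second, a small slip in your inverse construction: given an isomorphism $\psi\colon R/I\xrightarrow{\sim}\KK^d$, the vector $v$ is the \emph{image} $\psi(\bar 1)$, not the preimage.
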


\begin{corollary}[{\cite[\S 3.4]{JS}}]\label{dim_matrices-Hilbert_schemes}
    The irreducible components of $\Hilb_d(\mathbb{A}^n)$ are in bijection with irreducible components of $\cU^{\mathrm{st}}$, which are in bijection with those irreducible components of $\CommMat{d}{n}$ that contain an $n$-tuple of matrices having a cyclic vector.

    Moreover, if $\mathcal{Z}^H$ is an irreducible component of $\Hilb_d(\mathbb{A}^n)$ and $\mathcal{Z}^C$ is the corresponding irreducible component of $\CommMat{d}{n}$, then $\dim \mathcal{Z}^H=d-d^2+\dim \mathcal{Z}^C$.
\end{corollary}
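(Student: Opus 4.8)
The plan is to deduce the corollary by composing two elementary correspondences between irreducible components, one for each of the two maps linking the three spaces involved: the principal $\GL_d$-bundle $\phi\colon\cU^{\mathrm{st}}\to\Hilb_d(\AA^n)$ supplied by the previous lemma, and the (restriction of the) first projection $\cU^{\mathrm{st}}\hookrightarrow\cU=\CommMat{d}{n}\times\KK^d\to\CommMat{d}{n}$.

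First I would analyse $\phi$. As a principal $\GL_d$-bundle it is smooth and surjective, each of its fibres is isomorphic to $\GL_d$ and hence irreducible of dimension $d^2$, and it is open because it is flat and locally of finite type. Since $\GL_d$ is connected and acts on $\cU^{\mathrm{st}}$ by automorphisms permuting the components, every component of $\cU^{\mathrm{st}}$ is $\GL_d$-stable, hence a union of fibres of $\phi$, i.e.\ of the form $\phi^{-1}(Z)$ for a closed $Z\subseteq\Hilb_d(\AA^n)$; as $\phi$ is open, $Z=\phi(\phi^{-1}(Z))$ is closed, and being the image of an irreducible set it is irreducible. Conversely, for any irreducible closed $Z\subseteq\Hilb_d(\AA^n)$ the preimage $\phi^{-1}(Z)$ is irreducible: cover $Z$ by opens over which the bundle trivialises, observe that $Z$ irreducible forces these opens to meet pairwise, and conclude that their fibre-wise irreducible preimages glue to an irreducible space. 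Since $\phi^{-1}$ is order-preserving, it gives a bijection between the components of $\Hilb_d(\AA^n)$ and those of $\cU^{\mathrm{st}}$, with $\dim\phi^{-1}(Z)=\dim Z+d^2$.

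Next I would analyse the open subscheme $\cU^{\mathrm{st}}\subseteq\cU$. The components of $\cU=\CommMat{d}{n}\times\KK^d$ are exactly the products $C\times\KK^d$ with $C$ a component of $\CommMat{d}{n}$, so the components of $\cU^{\mathrm{st}}$ are precisely the nonempty traces $(C\times\KK^d)\cap\cU^{\mathrm{st}}$, each open and dense in its $C\times\KK^d$, hence irreducible of dimension $\dim C+d$. Such a trace is nonempty exactly when $C$ contains an $n$-tuple admitting a cyclic vector, which identifies the components of $\cU^{\mathrm{st}}$ with those components of $\CommMat{d}{n}$ having a cyclic tuple. Composing with the bijection of the previous paragraph yields the asserted correspondence of components; and if $\mathcal{Z}^H$ and $\mathcal{Z}^C$ are corresponding components with intermediate component $\widetilde{\mathcal{Z}}\subseteq\cU^{\mathrm{st}}$, chaining the two dimension counts gives $\dim\mathcal{Z}^C+d=\dim\widetilde{\mathcal{Z}}=\dim\mathcal{Z}^H+d^2$, that is, $\dim\mathcal{Z}^H=d-d^2+\dim\mathcal{Z}^C$.

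I expect the only genuine obstacle to be the bookkeeping in the second paragraph, namely checking cleanly that a $\GL_d$-bundle (more generally, a faithfully flat morphism with irreducible, equidimensional fibres) transports irreducible components bijectively with a uniform dimension shift. One should also keep in mind that $\CommMat{d}{n}$ carries its natural scheme structure cut out by the commutator equations, so that ``irreducible component'' refers to that possibly non-reduced scheme; since irreducibility and dimension are insensitive to nilpotents this is harmless, and the remainder is routine manipulation of open dense subsets of products.
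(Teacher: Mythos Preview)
The paper does not give its own proof of this corollary; it is simply cited from \cite[\S 3.4]{JS} and stated without argument. Your proposal is correct and is the natural way to deduce the result from the preceding lemma: exploit the principal $\GL_d$-bundle $\cU^{\mathrm{st}}\to\Hilb_d(\AA^n)$ to match components (with a $d^2$ dimension shift), then use that $\cU^{\mathrm{st}}$ is open in $\CommMat{d}{n}\times\KK^d$ to match components of $\cU^{\mathrm{st}}$ with components of $\CommMat{d}{n}$ meeting the cyclic locus (with a $d$ dimension shift). The bookkeeping you flag in your final paragraph is indeed the only point requiring care, and you handle it correctly: connectedness of $\GL_d$ forces each component of $\cU^{\mathrm{st}}$ to be $\GL_d$-saturated, and then order-preservation of $\phi^{-1}$ transports maximality. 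There is nothing to compare against in the present paper, and nothing to correct in your argument.
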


We now describe the relation between the support of the algebra in $\Hilb_d(\mathbb{A}^n)$ and eigenvalues of the corresponding commuting matrices, see \cite[\S 3.3 - 3.4]{JS}. Let $R/I\in \Hilb_d(\mathbb{A}^n)$ and let $(A_1,\ldots ,A_n)$ be the corresponding commuting matrices. If the maximal ideal $(\alpha_1-\lambda_1,\ldots ,\alpha_n-\lambda_n)$ is in the support of $R/I$, then $\lambda_i$ is an eigenvalue of $A_i$ for each $i$. Moreover, the support of $R/I$ contains only one point if and only if every matrix $A_i$ has only one eigenvalue. In particular, algebras supported at zero correspond exactly to nilpotent commuting matrices. Consequently, the elementary components of $\Hilb_d(\mathbb{A}^n)$, that is those parametrizing algebras supported at one point, correspond to components of $\CommMat{d}{n}$ containing only $n$-tuples of matrices with a single eigenvalue. On the other hand, the smoothable component of $\Hilb_d(\mathbb{A}^n)$ corresponds to the principal component of $\CommMat{d}{n}$, that is the closure of the locus of $n$-tuples of simultaneously diagonalizable matrices. We note that the dimension of the principal component of $\CommMat{d}{n}$ is $d^2+(n-1)d$.\\

\begin{remark}\label{3.20}
There is a close relation between Hilbert function of a local algebra supported at zero and the dimension of the common cokernel (i.e.\ the common kernel of transposes) of corresponding commuting matrices. Similarly, the socle dimension can be realized as the dimension of the common kernel of the matrices. We will frequently use this correspondence, so we recall it now, and refer to \cite[Lemma 3.19 and Equations (3.20)]{JS}.

Let $(\cA,\mathfrak{n})$ be a local algebra supported at zero, and $(A_1,\ldots ,A_n)\in \CommMat{d}{n}$ the corresponding $n$-tuple of nilpotent commuting matrices. If $k$ is any positive integer, and $\{i_1,\ldots ,i_k\}$ a subset of indices, then
$$\dim_{\KK}\cA/\mathfrak{n}^k=\dim_{\KK}\bigcap_{i_1,\ldots ,i_k}\ker (A_{i_1}\cdots A_{i_k})^T$$ 
and $$\tau (\cA)=\dim_{\KK}\bigcap_{i=1}^n\ker A_i.$$
In particular, the common cokernel of the matrices $A_1,\ldots ,A_n$ is 1-dimensional, and the algebra $\cA$ is Gorenstein if and only if the common kernel of matrices is 1-dimensional.
\end{remark}

In the proofs of the main results we will frequently compute the tangent spaces to $\CommMat{d}{n}$. We recall the following two facts.

\begin{lemma}[{\cite[Lemma 3.1]{JS}}]\label{tangent_space_commuting_marices}
    If $(A_1,\ldots ,A_n)\in \CommMat{d}{n}$, then
    $$T_{(A_1,\ldots ,A_n)}\CommMat{d}{n}=\{(Z_1,\ldots ,Z_n)\in \mathbb{M}_d^n:\ \ A_iZ_j+Z_iA_j=A_jZ_i+Z_jA_i\,\  \mathrm{for}\, \mathrm{all}\,\  i,j\}.$$
\end{lemma}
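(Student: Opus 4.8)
The plan is to recognize $\CommMat{d}{n}$, with the scheme structure fixed in the text, as the zero fibre of an explicit polynomial map and to read off its Zariski tangent space as the kernel of the differential of that map. Concretely, let
$$
F\colon \mathbb{M}_d^n \longrightarrow \prod_{1\le i<j\le n}\mathbb{M}_d,\qquad
F(A_1,\ldots ,A_n)=\bigl(A_iA_j-A_jA_i\bigr)_{i<j},
$$
so that $\CommMat{d}{n}=F^{-1}(0)$ as schemes, since the $\binom{n}{2}d^2$ entries of the matrices $A_iA_j-A_jA_i$ are precisely the quadratic equations cutting out $\CommMat{d}{n}$. For a closed point $p=(A_1,\ldots ,A_n)\in \CommMat{d}{n}$ one then has the standard identification $T_p\CommMat{d}{n}=\ker\bigl(dF_p\bigr)\subset \mathbb{M}_d^n$.

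Next I would compute $dF_p$ by linearization over the dual numbers $\KK[\varepsilon]/(\varepsilon^2)$. Substituting $A_k\mapsto A_k+\varepsilon Z_k$ gives, for each pair $i<j$,
$$
(A_i+\varepsilon Z_i)(A_j+\varepsilon Z_j)-(A_j+\varepsilon Z_j)(A_i+\varepsilon Z_i)=\bigl(A_iA_j-A_jA_i\bigr)+\varepsilon\bigl(A_iZ_j+Z_iA_j-A_jZ_i-Z_jA_i\bigr),
$$
the $\varepsilon^2$-terms vanishing. Since $p\in \CommMat{d}{n}$, the constant term is zero, so the $(i,j)$-component of $dF_p(Z_1,\ldots ,Z_n)$ equals $A_iZ_j+Z_iA_j-A_jZ_i-Z_jA_i$. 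Hence $(Z_1,\ldots ,Z_n)\in\ker(dF_p)$ exactly when $A_iZ_j+Z_iA_j=A_jZ_i+Z_jA_i$ for all $1\le i<j\le n$. I would then note that this relation in fact holds for \emph{all} indices $i,j$: for $i=j$ it is the tautology $2A_iZ_i=2A_iZ_i$, and it is invariant under swapping $i\leftrightarrow j$, so the family indexed by $i<j$ is equivalent to the family indexed by all $(i,j)$. This yields precisely the asserted description of $T_p\CommMat{d}{n}$.

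There is essentially no obstacle here; the only point requiring a little care is conceptual rather than computational, namely that we compute the tangent space to the (possibly non-reduced) scheme $\CommMat{d}{n}$ defined by the naive commutator equations, as fixed in the text, and not to its reduction — the linearization above automatically produces the tangent space to that scheme structure. Equivalently, one may argue without Jacobians: a $\KK[\varepsilon]/(\varepsilon^2)$-point of $\CommMat{d}{n}$ lifting $p$ is by definition a tuple $(A_k+\varepsilon Z_k)$ all of whose pairwise commutators vanish, and the displayed expansion shows this is exactly the stated condition on $(Z_1,\ldots ,Z_n)$.
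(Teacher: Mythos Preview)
Your proof is correct and is exactly the standard argument: realize $\CommMat{d}{n}$ as the zero fibre of the commutator map, linearize over $\KK[\varepsilon]/(\varepsilon^2)$, and read off the kernel of the differential. The paper does not prove this lemma at all but simply quotes it from \cite[Lemma~3.1]{JS}; your argument is essentially the one any reader would supply, and presumably the one in the cited reference.
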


\begin{lemma}[{\cite[Lemma 3.9]{JS}}]\label{dim_tangent_spaces}
    Let $R/I\in \Hilb_d(\mathbb{A}^n)$ be arbitrary and let $(A_1,\ldots ,A_n)\in \CommMat{d}{n}$ be the corresponding tuple of commuting matrices. Then
    $$\dim T_{R/I}\Hilb_d(\mathbb{A}^n)=d-d^2+\dim T_{(A_1,\ldots ,A_n)}\CommMat{d}{n}.$$
\end{lemma}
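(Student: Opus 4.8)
The plan is to deduce the identity from the fact, recalled just before the statement, that the forgetful map
\[
p\colon \cU^{\mathrm{st}}\longrightarrow \Hilb_d(\mathbb{A}^n),\qquad (A_1,\ldots ,A_n,v)\longmapsto R/I,
\]
realizes $\cU^{\mathrm{st}}$ as a principal $\GL_d$-bundle, the group $\GL_d$ acting by simultaneous conjugation of the tuple $(A_1,\ldots ,A_n)$ together with its tautological action on the cyclic vector $v$. (The relevant relative dimension is $d^2$, consistently with the formula $\dim\mathcal Z^H=d-d^2+\dim\mathcal Z^C$ of Corollary~\ref{dim_matrices-Hilbert_schemes}.) A torsor under the smooth connected group $\GL_d$ is a smooth morphism whose fibres are $\GL_d$-torsors, hence smooth of dimension $\dim\GL_d=d^2$; so $p$ is smooth of relative dimension $d^2$. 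The two ingredients I will combine are: (i) smoothness of $p$ gives an exact tangent sequence, and (ii) $\cU^{\mathrm{st}}$ is open in $\cU=\CommMat{d}{n}\times\KK^d$, so its tangent spaces split off a copy of $\KK^d$.

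Concretely, fix $x=(A_1,\ldots ,A_n,v)\in\cU^{\mathrm{st}}$ over $[R/I]\in\Hilb_d(\mathbb{A}^n)$. Since $p$ is smooth at $x$, the differential $dp_x\colon T_x\cU^{\mathrm{st}}\to T_{[R/I]}\Hilb_d(\mathbb{A}^n)$ is surjective with kernel $T_x\bigl(p^{-1}([R/I])\bigr)$; the fibre $p^{-1}([R/I])$ is a $\GL_d$-torsor, in particular smooth of dimension $d^2$, so this kernel has dimension $d^2$, giving $\dim T_x\cU^{\mathrm{st}}=d^2+\dim T_{[R/I]}\Hilb_d(\mathbb{A}^n)$. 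On the other hand, openness of $\cU^{\mathrm{st}}$ in $\CommMat{d}{n}\times\KK^d$ yields $T_x\cU^{\mathrm{st}}=T_{(A_1,\ldots ,A_n)}\CommMat{d}{n}\oplus T_v\KK^d$, hence $\dim T_x\cU^{\mathrm{st}}=\dim T_{(A_1,\ldots ,A_n)}\CommMat{d}{n}+d$. Equating the two expressions and rearranging gives precisely
\[
\dim T_{[R/I]}\Hilb_d(\mathbb{A}^n)=d-d^2+\dim T_{(A_1,\ldots ,A_n)}\CommMat{d}{n}.
\]

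The one delicate point is step (i): that a $\GL_d$-torsor is a smooth morphism of relative dimension exactly $d^2$, so that the fibrewise tangent contribution is $\dim\GL_d=d^2$ irrespective of the possibly non-reduced scheme structures on $\CommMat{d}{n}$ and $\Hilb_d(\mathbb{A}^n)$; this rests only on $\GL_d$ being smooth of dimension $d^2$ and on the freeness of the action (an element of $\GL_d$ fixing $x$ commutes with all $A_i$ and fixes $v$, hence fixes $\KK[A_1,\ldots ,A_n]v=\KK^d$). If one prefers to avoid the bundle language, the same conclusion follows by hand: by Lemma~\ref{tangent_space_commuting_marices}, a tangent vector $(Z_1,\ldots ,Z_n)\in T_{(A_1,\ldots ,A_n)}\CommMat{d}{n}$ is exactly a first-order deformation over $\KK[\varepsilon]/(\varepsilon^2)$ of the cyclic $R$-module structure on $\KK^d$; keeping $v$ fixed, this is a flat deformation of $R\twoheadrightarrow R/I$, i.e.\ a tangent vector to $\Hilb_d(\mathbb{A}^n)$. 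The resulting linear map $T_{(A_1,\ldots ,A_n)}\CommMat{d}{n}\to T_{[R/I]}\Hilb_d(\mathbb{A}^n)$ is surjective, since any infinitesimal change of the cyclic vector lies in $\KK[A_1,\ldots ,A_n]\,v$ and is absorbed by the $\GL_d$-action, and its kernel is $\{(A_iM-MA_i)_{i=1}^n : M\in\mathbb{M}_d\}$, of dimension $d^2-\dim\mathrm{End}_R(R/I)=d^2-d$; rank–nullity then yields the stated formula.
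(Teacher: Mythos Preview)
Your argument is correct. The paper itself does not supply a proof of this lemma: it is stated with a citation to \cite[Lemma 3.9]{JS} and used as a black box. Your derivation from the $\GL_d$-torsor structure of $\cU^{\mathrm{st}}\to\Hilb_d(\mathbb{A}^n)$ together with the openness of $\cU^{\mathrm{st}}$ in $\CommMat{d}{n}\times\KK^d$ is the natural way to see the formula, and your alternative rank--nullity computation is also sound (the only point worth spelling out is why the kernel can be described as $\{([M,A_i])_i:M\in\mathbb{M}_d\}$ rather than only those $M$ with $Mv=0$; this holds because any $M$ differs from one annihilating $v$ by an element of the centraliser $\KK[A_1,\ldots,A_n]$, using that $v$ is cyclic).
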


We get an immediate corollary of Lemmas \ref{dim_matrices-Hilbert_schemes} and \ref{dim_tangent_spaces}.

\begin{corollary}
    A point $R/I$ is smooth in $\Hilb_d(\mathbb{A}^n)$ if and only if the corresponding point $(A_1,\ldots ,A_n)$ is smooth in $\CommMat{d}{n}$.
\end{corollary}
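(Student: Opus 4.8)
The plan is to deduce the statement from the standard criterion that a closed point $p$ of a scheme $X$ of finite type over the algebraically closed field $\KK$ is smooth (equivalently, $\OO_{X,p}$ is a regular local ring) if and only if $\dim_{\KK}T_pX=\dim_pX$, where $\dim_pX$ denotes the maximum of the dimensions of the irreducible components of $X$ through $p$; recall also that regularity of $\OO_{X,p}$ already forces $p$ to lie on a single component. Granting this, the corollary reduces to comparing the two numbers $\dim T_p$ and $\dim_p$ on the two sides, and the two cited statements are designed to do exactly that, provided one first upgrades the correspondence of Corollary~\ref{dim_matrices-Hilbert_schemes} to a statement about components through a fixed point.

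First I would make that upgrade. Write $(A_1,\dots,A_n)$ for the tuple attached to $R/I$, and pick a cyclic vector $v$, so $(A_1,\dots,A_n,v)\in\cU^{\mathrm{st}}$. Every irreducible component of $\CommMat{d}{n}$ through $(A_1,\dots,A_n)$ contains the cyclic tuple $(A_1,\dots,A_n)$, hence is one of the components occurring in the bijection of Corollary~\ref{dim_matrices-Hilbert_schemes}. Since $\cU^{\mathrm{st}}$ is open in $\CommMat{d}{n}\times\AA^d$ and a principal bundle over $\Hilb_d(\AA^n)$ — in particular a smooth surjection with irreducible fibres — the components of $\cU^{\mathrm{st}}$ are precisely the preimages of the components of $\Hilb_d(\AA^n)$, and they are of the form $\cU^{\mathrm{st}}\cap(\mathcal{Z}^C\times\AA^d)$ for $\mathcal{Z}^C$ a (cyclic) component of $\CommMat{d}{n}$. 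Hence $R/I$ lies on $\mathcal{Z}^H$ if and only if $(A_1,\dots,A_n)$ lies on the matched component $\mathcal{Z}^C$, and in particular $R/I$ lies on a unique component of $\Hilb_d(\AA^n)$ exactly when $(A_1,\dots,A_n)$ lies on a unique component of $\CommMat{d}{n}$. Taking maxima over these matched families and invoking the component-dimension formula $\dim\mathcal{Z}^H=d-d^2+\dim\mathcal{Z}^C$ gives the pointwise identity
\begin{equation*}
\dim_{R/I}\Hilb_d(\AA^n)=d-d^2+\dim_{(A_1,\dots,A_n)}\CommMat{d}{n}.
\end{equation*}

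Next I would combine this with Lemma~\ref{dim_tangent_spaces}, which gives $\dim T_{R/I}\Hilb_d(\AA^n)=d-d^2+\dim T_{(A_1,\dots,A_n)}\CommMat{d}{n}$. Subtracting the two identities, the shift $d-d^2$ cancels, so
\begin{equation*}
\dim T_{R/I}\Hilb_d(\AA^n)-\dim_{R/I}\Hilb_d(\AA^n)=\dim T_{(A_1,\dots,A_n)}\CommMat{d}{n}-\dim_{(A_1,\dots,A_n)}\CommMat{d}{n},
\end{equation*}
and the left-hand side vanishes precisely when the right-hand side does. By the smoothness criterion applied on each side, this says exactly that $R/I$ is a smooth point of $\Hilb_d(\AA^n)$ if and only if $(A_1,\dots,A_n)$ is a smooth point of $\CommMat{d}{n}$.

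The only step that is not purely formal is the first one: one has to be careful that the bijection of Corollary~\ref{dim_matrices-Hilbert_schemes} is genuinely local around the chosen point (so that it matches exactly the components through $R/I$, with the right local dimensions) and that "smooth point'' is interpreted as "regular local ring'', which is what makes the single-component bookkeeping come out. A slicker route that avoids this is to observe that regularity is both preserved and reflected by the smooth morphism $\cU^{\mathrm{st}}\to\Hilb_d(\AA^n)$, by the open immersion $\cU^{\mathrm{st}}\hookrightarrow\CommMat{d}{n}\times\AA^d$, and by the projection $\CommMat{d}{n}\times\AA^d\to\CommMat{d}{n}$ (using that $\AA^d$ is smooth); however, since the paper isolates the two dimension formulas, I would present the dimension-counting version above.
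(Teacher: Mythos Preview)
Your argument is correct and follows exactly the route the paper intends: the paper offers no proof at all beyond the sentence ``We get an immediate corollary of Lemmas~\ref{dim_matrices-Hilbert_schemes} and~\ref{dim_tangent_spaces}'', so your dimension-counting deduction from those two results is precisely the intended reasoning, just made explicit. The one substantive point you unpack --- that the component bijection of Corollary~\ref{dim_matrices-Hilbert_schemes} restricts to a bijection between components through $R/I$ and components through $(A_1,\dots,A_n)$, so that the local dimensions match --- is exactly what is needed to make ``immediate'' rigorous, and your use of the smooth surjection $\cU^{\mathrm{st}}\to\Hilb_d(\AA^n)$ together with the open inclusion into $\CommMat{d}{n}\times\AA^d$ handles it cleanly.
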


\section{Description of graded Hilbert schemes}\label{DescriptionOfComponents}
Fix a one-dimensional torus $\Gmult$ and its action on
    $\mathbb{A}^n$ by $t\cdot (x_1, \ldots ,x_n) = (tx_1, \ldots ,
    tx_n)$. The
    points of $\Hilb_d^{\Gmult}(\mathbb{A}^n,0)$ are just subschemes $Z
    \subset \mathbb{A}^n$ whose ideals are homogeneous with respect to the
    standard grading. Let $\HilbFunc{H}{n}$ be the locus of local algebras supported at the origin and $\Hilb_H^{\Gmult}(\mathbb{A}^n,0)$ be the locus of local graded algebras with Hilbert function $H$, respectively. There is a natural map
$$\pi_H:\HilbFunc{H}{n}\longrightarrow  \Hilb_H^{\Gmult}(\mathbb{A}^n,0)$$
mapping an algebra to its associated graded algebra or an ideal $I$ to its initial ideal with respect to the weight vector $(-1,\ldots,-1)$.
In this section, we describe the irreducible components of $\HilbFunc{H}{n}$ for $H=(1,n,3,2)$ and $H=(1,n,2,2,1)$, using the description of the components of $\Hilb_H^{\Gmult}(\mathbb{A}^n,0)$ and the fibres of the map $\pi_H$.

\begin{definition}
    We say that a polynomial $f \in \mathbb{K}[x_1,\ldots,x_n]$ depends on $k$ variables if there exist linear forms $y_1,\ldots,y_k \in \mathbb{K}[x_1,\ldots,x_n]$ such that $f \in \mathbb{K}[y_1,\ldots,y_k]$. We say that $f$ depends essentially on $k$ variables if it depends on $k$ variables but does not depend on $k-1$ variables.
\end{definition}

\subsection{Hilbert function $(1,n,3,2)$}
In order to analyze the scheme $\HilbFuncGr{H}{n}$ for Hilbert function $H = (1,n,3,2)$, we need to classify spaces of polynomials with special properties and use some classical algebraic geometry (secant varieties). We do so in Lemma~\ref{better_pencil_form}, Proposition~\ref{buchsbaum_eisenbud}, and Lemma~\ref{annoying_cubics}.

\begin{lemma}
    \label{better_pencil_form}
    Suppose $v_3 : \PP(\KK^2) \hookrightarrow \PP(\Sym^3 \KK^2)$ is the Veronese embedding and $L \subseteq   \PP^3=\PP(\Sym^3\KK^2) $ is a line not intersecting $v_3(\PP^1)$. Then $L$ is spanned by two points of the form
    \begin{equation*}
        [x^3 + a y^3] \text{ and } [x^3 + b(x+y)^3]
    \end{equation*}
    for some coordinates $x,y$ on $\PP^1$ and some $a,b \in \mathbb{K}$.
\end{lemma}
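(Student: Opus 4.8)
The plan is to reduce the statement to a configuration of chords of the twisted cubic and then realize that configuration by projecting $\PP^3$ from $L$. Work in $\PP^3=\PP(\Sym^3\KK^2)$, writing points of $\PP^1=\PP(\KK^2)$ through linear forms, so that $C:=v_3(\PP^1)$ is the twisted cubic $\{[\ell^3]\}$, a nondegenerate curve of degree $3$, and the line joining two distinct points $[x^3],[y^3]$ of $C$ is $\{[sx^3+ty^3]:[s:t]\in\PP^1\}$. In this language the lemma asks, for a suitable choice of coordinates, for a chord of $C$ through $[x^3]$ and $[y^3]$ and another chord through $[x^3]$ and $[(x+y)^3]$, each meeting $L$, and for $L$ to be spanned by a point of the first and a point of the second. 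So I would first produce three pairwise distinct points $q,r_1,r_2\in C$ such that the chords $\overline{q\,r_1}$ and $\overline{q\,r_2}$ both meet $L$; then choose coordinates $x,y$ on $\PP^1$ with $q=[x^3]$, $r_1=[y^3]$, $r_2=[(x+y)^3]$, which is possible since three distinct points of $\PP^1$ form a projective frame (pick representatives and rescale $x,y$); and finally set $p_i:=\overline{q\,r_i}\cap L$.

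The key step is producing $q,r_1,r_2$. Since $L\cap C=\emptyset$, the linear projection $\pi_L\colon\PP^3\dashrightarrow\PP^1$ away from $L$ is defined on all of $C$ and so restricts to a morphism $\psi\colon C\to\PP^1$, whose fibre over a plane $H\supseteq L$ is $C\cap H$; because $C$ is nondegenerate of degree $3$, every fibre has length $3$, so $\deg\psi=3$. As $\Char\KK=0$, the map $\psi$ is separable, hence its ramification locus is a finite subset of $C$; choose $q$ outside it, so that $\psi^{-1}(\psi(q))=\{q,r_1,r_2\}$ with the three points distinct. For $i=1,2$ the points $q$ and $r_i$ lie in a common plane $H\supseteq L$, hence the chord $\overline{q\,r_i}\subseteq H$ meets $L\subseteq H$. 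Moreover $\overline{q\,r_1}\ne\overline{q\,r_2}$: a line meets the twisted cubic in at most two points, so $\overline{q\,r_1}\cap C=\{q,r_1\}$ and thus $r_2\notin\overline{q\,r_1}$; consequently the two chords meet only at $q$.

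The remaining normalization is routine. Each $p_i:=\overline{q\,r_i}\cap L$ is a single point, since $\overline{q\,r_i}\ne L$ (the former meets $C$, the latter does not) and both lie in the plane $H$; moreover $p_i\notin C$ because $p_i\in L$, and $p_1\ne p_2$ because otherwise the unique common point of the two chords would be $q\in C$. After choosing $x,y$ with $q=[x^3]$, $r_1=[y^3]$ and rescaling so that $r_2=[(x+y)^3]$, the point $p_1$ lies on $\overline{[x^3]\,[y^3]}$ and is not $[y^3]$, hence has the form $[x^3+ay^3]$; similarly $p_2=[x^3+b(x+y)^3]$; and since $p_1\ne p_2$ these two classes span $L$, as required. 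The one ingredient beyond linear algebra — and the point I would be most careful about — is that $\psi$ has only finitely many ramification points, equivalently that a generic plane through $L$ cuts $C$ in three distinct points; this is exactly the separability of $\psi$, which holds because $\Char\KK=0$, and it is what forces the choice of $q$ to be generic.
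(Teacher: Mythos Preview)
Your proof is correct and follows essentially the same approach as the paper: both take a general plane through $L$ (you phrase this via the projection $\pi_L$ and an unramified fibre of $\psi$), obtain three distinct points $[x^3],[y^3],[(x+y)^3]$ of the twisted cubic in that plane, and then intersect the two chords through $[x^3]$ with $L$. Your version is more explicit about why the fibre is reduced (separability in characteristic zero) and why the two intersection points are distinct and span $L$, details the paper leaves implicit.
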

\begin{proof}
   A general plane through $L$ cuts $v_3(\PP^1)$ in $3$ points. We can choose the points to be $[x^3], [y^3]$ and $[(x+y)^3]$. The lines $\langle x^3, y^3 \rangle $ and $L$ intersect (two lines on $\PP^2$), and lines $\langle x^3, (x+y)^3\rangle$ and $L$ also intersect. The lemma follows.
\end{proof}

\begin{proposition}
    \label{buchsbaum_eisenbud}
    Consider the Veronese embedding $v_3 : \PP^2 \hookrightarrow \PP^9$. Let $U \subseteq \PP (\Sym^3 \KK^3)$ be the set of cubics $F$ depending essentially on $3$ variables whose annihilator $\Ann(F)$ has at least one cubic minimal generator. Then
    \begin{enumerate}
        \item For every cubic $[F]$ in $U$ the ideal $\Ann(F)$ has exactly two minimal cubic generators.
        \item The closure of $U$ is the secant variety $\sigma_3(v_3(\PP^2))$. In particular, $\overline{U}$ is irreducible of dimension $8$.
    \end{enumerate}
\end{proposition}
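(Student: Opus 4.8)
The plan is to study the apolar algebra $A_F=\KK[\alpha_1,\alpha_2,\alpha_3]/\Ann(F)$, which for $[F]\in U$ is an Artinian Gorenstein algebra of codimension $3$ and socle degree $3$; since $F$ depends essentially on three variables, its Hilbert function is $(1,3,3,1)$, so $(\Ann F)_1=0$ and $\dim_{\KK}(\Ann F)_2=3$. The main tools are the Buchsbaum--Eisenbud structure theorem for codimension-$3$ Gorenstein ideals, the Hilbert--Burch theorem, and the apolarity lemma.

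For part (1): since $(\Ann F)_1=0$, the three independent quadrics spanning $(\Ann F)_2$ are always minimal generators, and self-duality of the (length-$3$) minimal free resolution forces $\beta_{1,j}=\beta_{2,6-j}$, which vanishes for $j\geq 4$ (the first syzygies of $\Ann(F)$ have degree $\geq 3$, so $\beta_{2,i}=0$ for $i\leq 2$); hence the minimal number of generators of $\Ann(F)$ is $3+c$, where $c\geq 1$ is the number of cubic minimal generators. By the Buchsbaum--Eisenbud structure theorem, $3+c$ is odd, so $c$ is even and $c\geq 2$. To show $c=2$, I would put the alternating matrix $\Phi$ of the resolution into block form: a $3\times 3$ alternating block $A$ of quadrics in the rows and columns indexed by the quadratic generators, a $3\times c$ block $B$ of linear forms, and a $c\times c$ constant block indexed by the cubic generators, which vanishes by minimality. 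The quadratic generators of $\Ann(F)$ are the submaximal pfaffians $\mathrm{Pf}_i(\Phi)$ obtained by deleting a row and column indexed by a quadratic generator; the remaining $(2+c)\times(2+c)$ alternating matrix then has a $c\times c$ zero block indexed by the cubic generators. In any nonzero term of the pfaffian expansion, each of these $c$ indices must be matched to one of the only two remaining indices; for $c\geq 4$ this is impossible, so $\mathrm{Pf}_i(\Phi)=0$, contradicting that it is a nonzero quadric. Hence $c=2$.

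For part (2): fix $[F]\in U$, so $c=2$ by part (1). With $c=2$ the block matrix is $\Phi=\left(\begin{smallmatrix}A&B\\-B^{\top}&0\end{smallmatrix}\right)$ with $B$ of size $3\times 2$, and the pfaffian computation above identifies the three quadratic generators of $\Ann(F)$ with the three $2\times 2$ minors of $B$. These minors have no common linear factor: otherwise $(\Ann F)_2$ would equal $\ell$ times the space of linear forms for some $\ell$, so $\ell\alpha_i\in\Ann(F)$ for all $i$, forcing the quadric $\ell\lrcorner F$ to be annihilated by every $\alpha_i$ and hence to vanish, i.e.\ $\ell\in\Ann(F)$, contradicting $(\Ann F)_1=0$. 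Therefore $I_2(B)$ has codimension $2$, and by the Hilbert--Burch theorem it is a saturated Cohen--Macaulay ideal with Hilbert function $(1,3,3,3,\ldots)$; that is, $I_2(B)=I(\Gamma)$ for a length-$3$ subscheme $\Gamma\subseteq\PP^2$. Since $I(\Gamma)=I_2(B)\subseteq\Ann(F)$ and $\Gamma$ is smoothable, the apolarity lemma yields $F\in\langle v_3(\Gamma)\rangle\subseteq\sigma_3(v_3(\PP^2))$, whence $\overline U\subseteq\sigma_3(v_3(\PP^2))$. For the reverse inclusion, any $F=\ell_1^3+\ell_2^3+\ell_3^3$ with $\ell_1,\ell_2,\ell_3$ linearly independent is, after a linear change of coordinates, the Fermat cubic $x_1^3+x_2^3+x_3^3$, whose annihilator $(\alpha_1\alpha_2,\,\alpha_1\alpha_3,\,\alpha_2\alpha_3,\,\alpha_1^3-\alpha_2^3,\,\alpha_1^3-\alpha_3^3)$ has two cubic minimal generators, so $[F]\in U$; such cubics are dense in $\sigma_3(v_3(\PP^2))$, giving $\sigma_3(v_3(\PP^2))\subseteq\overline U$. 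Finally $\sigma_3(v_3(\PP^2))$ is irreducible of the expected dimension $8$, since $v_3(\PP^2)$ is not $3$-defective by the Alexander--Hirschowitz theorem (equivalently, it is the Aronhold quartic hypersurface in $\PP^9$).

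The crux is the bound $c\leq 2$ in part (1): this pfaffian-vanishing argument genuinely relies on the Buchsbaum--Eisenbud normal form and on the degree count that kills the constant block of $\Phi$. Once $c=2$ is established, the determinantal presentation of $(\Ann F)_2$, the Hilbert--Burch theorem, and the apolarity lemma make the identification $\overline U=\sigma_3(v_3(\PP^2))$ essentially formal.
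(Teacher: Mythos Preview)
Your proof is correct and follows a genuinely different route from the paper's.

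For Part~1, the paper bounds the cubic Betti number $b_{13}$ via a Boij--S\"oderberg decomposition of the Betti table: writing it as a nonnegative rational combination of three specific pure tables and comparing entries forces $b_{13}\le 2$, and then Buchsbaum--Eisenbud parity gives $b_{13}=2$. You instead argue directly with the alternating matrix $\Phi$ of the resolution: minimality kills the $c\times c$ constant block indexed by the cubic generators, and the perfect-matching expansion of a submaximal Pfaffian (deleting a quadratic row/column) vanishes identically once $c\ge 4$, contradicting that this Pfaffian is one of the nonzero quadratic generators. Your argument is more elementary and self-contained; the paper's is more mechanical and would adapt more readily to other Hilbert functions. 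For Part~2, the paper cites an external result to the effect that such $F$ is a limit of direct sums and hence lies in $\sigma_3(v_3(\PP^2))$; you instead read off that the three quadratic Pfaffians are the $2\times 2$ minors of the $3\times 2$ linear block $B$ of $\Phi$, rule out a common linear factor (else $(\Ann F)_1\neq 0$), and invoke Hilbert--Burch plus the apolarity lemma to produce an explicit length-$3$ apolar scheme. This is constructive and also makes the reverse inclusion $\sigma_3(v_3(\PP^2))\subseteq\overline U$ explicit via the Fermat cubic, which the paper leaves implicit.
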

\begin{proof}
  Let us begin with Point~1. The Betti table of minimal resolution of $R/\Ann(F)$ is symmetric. It has four columns (since we are over the ring $\KK[\alpha_1,\alpha_2, \alpha_3]$) and four rows (by the structure of the Buchsbaum-Eisenbud resolution \cite[Theorem~B.2]{iakanev}), so it is of the form
  \begin{equation}
      \label{betti_table}
      \begin{matrix}
          b_{00} & - & - & - \\
          - & b_{12} & b_{23} & - \\
          - & b_{13} & b_{24} & - \\
          - & - & - & b_{36} 
      \end{matrix}
  \end{equation}
  From \cite[Section~1.4]{boij_sorderberg} it follows that the possible pure Betti tables of shape \eqref{betti_table} for Cohen-Macaulay modules of codimension $3$ are
  \begin{equation*}
      \begin{matrix}
          1 & - & - & - \\
          - & 3 & - & - \\
          - & - & 3 & - \\
          - & - & - & 1 
      \end{matrix}\hspace{1cm}
      \begin{matrix}
          2 & - & - & - \\
          - & 9 & 8 & - \\
          - & - & - & - \\
          - & - & - & 1 
      \end{matrix}\hspace{1cm}
      \begin{matrix}
         1 & - & - & - \\
         - & - & - & - \\
         - & 8 & 9 & - \\
         - & - & - & 2 
      \end{matrix}
  \end{equation*} 
  The Betti table of $R/\Ann(F)$ is a positive rational combination of those tables (see \cite[Section~1.5]{boij_sorderberg}). We know that $\Ann(F)$ has three minimal quadratic generators, so we get that
  \begin{equation*}
  \begin{pmatrix}
          1 & - & - & - \\
          - & 3 & b_{23} & - \\
          - & b_{13} & 3 & - \\
          - & - & - & 1 
  \end{pmatrix}
  = a\cdot 
      \begin{pmatrix}
          1 & - & - & - \\
          - & 3 & - & - \\
          - & - & 3 & - \\
          - & - & - & 1 
      \end{pmatrix}
      + b\cdot 
      \begin{pmatrix}
          2 & - & - & - \\
          - & 9 & 8 & - \\
          - & - & - & - \\
          - & - & - & 1 
      \end{pmatrix}
      + c \cdot
      \begin{pmatrix}
         1 & - & - & - \\
         - & - & - & - \\
         - & 8 & 9 & - \\
         - & - & - & 2 
      \end{pmatrix}
  \end{equation*}
  for some $a,b,c \geq 0$. From this we obtain $b = c$, and then $b \leq \frac{1}{3}$, and this implies $b_{13} = b_{23} \in \{0,1,2\}$. By \cite[Theorem~B.2]{iakanev}, the ideal $\Ann(F)$ has an odd number of generators, therefore it must have exactly two minimal cubic generators.

  As for Point~2., by \cite[Theorem~1.7]{apolarity_direct_sums}, we know that $F$ must be a limit of direct sums, which in this case means that $F$ is in the third secant variety.
\end{proof}

\begin{lemma}
  \label{annoying_cubics}
  Let $n \geq 3$. Let $W\subseteq \Sym^3 \KK^n$ be a pencil of cubics such that for every $f \in W$ there exist linear forms $x,y$ with $f \in \Sym^3 \langle x, y \rangle$. 
  Then either all the cubics in $W$ depend on the same set of $2$ variables, or we can write $W = \langle x^2 y , x^2 z\rangle$ for some linearly independent forms $x,y,z$.
\end{lemma}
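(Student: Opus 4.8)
The plan is to parametrize the pencil $W$ and exploit the rigidity of the condition "every member is a cube-of-a-binary-form times nothing" — i.e.\ every $f\in W$ lies in $\Sym^3\langle x,y\rangle$ for some $2$-dimensional space $\langle x,y\rangle$ depending on $f$. First I would pick a basis $f_0,f_1$ of $W$, so a general member is $f_t=f_0+tf_1$. By hypothesis each $f_t$ depends on (at most) $2$ variables; equivalently, the space of first partials $\partial(f_t):=\langle \alpha_i\hook f_t : i=1,\dots,n\rangle\subseteq \Sym^2\KK^n$ has dimension $\le 2$, and in fact the apolar algebra $\Apolar(f_t)$ has Hilbert function $(1,k,\dots)$ with $k\le 2$. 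So the whole analysis reduces to understanding pencils of binary cubics sitting inside $\PP(\Sym^3\KK^n)$, and how the underlying $2$-plane $\langle x,y\rangle$ is allowed to vary.

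The key dichotomy comes from asking whether the spaces $\langle x,y\rangle$ attached to the members of $W$ are all the same. If $f_0$ and $f_1$ both genuinely depend on $2$ variables, consider the associated planes $P_0,P_1\subseteq \KK^n$ (spanned by the linear forms each genuinely involves). If $P_0=P_1$ we are in the first case (all cubics in $W$ live in a common $\Sym^3$ of a fixed $2$-plane) — here I'd need the small observation that then every $f_0+tf_1$ also lies in $\Sym^3 P_0$, which is immediate. If $P_0\ne P_1$, I would look at $P_0\cap P_1$: generically this is a line $\langle x\rangle$, and the claim is that $f_0,f_1$ are then forced to be, up to scalars and change of basis, $x^2y$ and $x^2z$. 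To see this: write $f_0=a_0x^3+\dots$ as a binary cubic in coordinates on $P_0$, similarly $f_1$ on $P_1$; a general member $f_t$ must \emph{also} be a binary cubic, hence its three linear factors (over $\bar\KK$) must span a $2$-plane. Tracking how the three roots of $f_t$ move as $t$ varies, and using that for three generic values of $t$ the three planes $P_t$ would have to pairwise intersect and in fact be coplanar in the Grassmannian sense, pins down $f_0,f_1$ up to the stated normal form; the case where one of $f_0,f_1$ depends on only $1$ variable (a pure cube $x^3$) is a degenerate sub-case that still lands in $\langle x^2y,x^2z\rangle$ after renaming.

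I expect the main obstacle to be the careful case analysis of how the $2$-planes $P_t$ vary: one must rule out "pathological" pencils where each individual member is binary but the family sweeps out something larger, and reduce everything cleanly to the two stated outcomes. A clean way to organize this is to use Lemma~\ref{better_pencil_form}: if $W$ does not lie in a single $\Sym^3$ of a $2$-plane, then after intersecting with a suitable $\PP^3=\PP(\Sym^3\langle x,y\rangle)$ one sees a line $L$; if $L$ missed the twisted cubic $v_3(\PP^1)$, Lemma~\ref{better_pencil_form} would give two honestly binary generators in a \emph{fixed} plane, contradiction — so $L$ must meet $v_3(\PP^1)$, which forces the common linear form $x$ and hence the shape $\langle x^2y,x^2z\rangle$. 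The bookkeeping to make "suitable $\PP^3$" precise, and to handle the low-dimensional degeneracies (e.g.\ when $W$ itself, viewed projectively, meets the cubic scroll), is where the real work sits; the rest is linear algebra.
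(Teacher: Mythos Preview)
Your setup is right: choose a basis $f_0,f_1$ of $W$, let $P_i\subseteq\KK^n$ be the span of the variables $f_i$ essentially depends on, and split into the cases $P_0=P_1$ versus $P_0\ne P_1$. The first case is immediate, and when $P_0\ne P_1$ you correctly expect $P_0\cap P_1=\langle x\rangle$ (the case $P_0\cap P_1=0$ is ruled out because then $f_0+f_1$ would essentially depend on four variables; you should make this explicit rather than calling the one-dimensional intersection ``generic'').

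The genuine gap is in how you propose to finish the case $P_0\cap P_1=\langle x\rangle$. Your invocation of Lemma~\ref{better_pencil_form} does not work: that lemma lives entirely inside $\PP(\Sym^3\KK^2)$ for a \emph{fixed} $2$-plane, so it only applies once $W$ already sits inside some $\Sym^3\langle x,y\rangle$~--- which is precisely the first branch of your dichotomy, not the second. There is no ``suitable $\PP^3=\PP(\Sym^3\langle x,y\rangle)$'' whose intersection with $W$ is a line when $W$ is spanned by a cubic in $x,y$ and a cubic in $x,z$; such an intersection is at most a point. Likewise, the ``tracking how the three roots of $f_t$ move'' paragraph is a sketch of an intuition, not an argument: you never use the hypothesis for more than two values of $t$, and you do not extract any concrete constraint on $f_0,f_1$ from it.

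What the paper actually does in this case is a short, explicit classification. Once $P_0=\langle x,y\rangle$ and $P_1=\langle x,z\rangle$, a binary cubic with the line $\langle x\rangle$ distinguished has only a handful of normal forms (e.g.\ $x^3+y^3$, $\lambda y^3+(x+y)^3$, $x^2y$, $y^2x$, $y^2(x+y)$), and similarly for $f_1$ in $x,z$. One then checks, pair by pair (five discrete cases plus one one-parameter family, by symmetry), that $f_0+f_1$ essentially depends on all three variables $x,y,z$ unless $f_0=x^2y$ and $f_1=x^2z$. This is elementary: compute the first partials of $f_0+f_1$ and verify they are linearly independent. Your plan would be rescued by replacing the appeal to Lemma~\ref{better_pencil_form} with exactly this finite check.
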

\begin{proof}
    We consider a basis of $W$ given by two cubics $f_1$ and $f_2$, both of them essentially in two variables. Let $V_i\subseteq \KK^n$ be the vector space of variables, on which $f_i$ depends, for $i = 1,2$. If $V_1 \cap V_2 = 0$, then the cubic $f_1 + f_2$ depends essentially on $4$ variables, a contradiction. 
    If $V_1 \cap V_2 = 2$, then all the cubics in $W$ depend on the same set of $2$ variables.
    
    Suppose $\dim (V_1 \cap V_2) = 1$. We may assume $V_1 = \langle x,y\rangle$ and $V_2 =\langle x, z\rangle$. The cubic $f_1$ can be either of rank $2$ or $3$. If it is of rank two, then either $f_1 = x^3 + y^3$ or $f_1 = \lambda y^3 + (x+y)^3$, depending on whether $x$ appears in the decomposition of $f_1$. If it is of rank three, then $f_1 = u^2 v$ for some linear forms $u, v$. We get that $ f_1 \in \{x^2 y, y^2 x, y^2(x+y)\}$, depending on whether $x$ is one of the forms $u$, $v$ (first two cases) or not (third case). We obtain that
    \begin{equation*}
      f_1 \in \{ x^3 + y^3, \lambda y^3 + (x+y)^3, x^2y, y^2 x, y^2 (x+y) \}
    \end{equation*}
    for some $\lambda \in \mathbb{K}$.
    Similarly
    \begin{equation*}
      f_2 \in \{ x^3 + z^3, \mu z^3 + (x+z)^3, x^2z, z^2 x, z^2 (x+z) \}
    \end{equation*}
    for some $\mu \in \mathbb{K}$. This is a finite number of cases to check, plus an infinite number of cases parametrized by $\mu \in \KK$ when $f_2 = \mu z^3 + (x+z)^3$ (or $f_1 = \lambda y^3 + (x+y)^3$, but this is symmetric). Here we check the cases coming as a family:
    \begin{enumerate}[label=\arabic*.]
    \item $f_1 = x^3 + y^3, f_2 = \mu z^3 + (x+ z)^3$

      \begin{equation*}
        (a \alpha + b \beta + c \gamma) \lrcorner (f_1 + f_2) 
        = 3x^2(2a + c) + 3y^2 b + 3z^2(a + \mu c + c) + 6xz(a+c)\text{,}
      \end{equation*} 
    \item $f_1 = \lambda y^3 + (x+y)^3, f_2 = \mu z^3 + (x + z)^3$
      \begin{multline*}
         (a \alpha + b\beta + c \gamma) \lrcorner(f_1 + f_2)  \\= 3x^2(2a + b + c) + 3y^2(a + \lambda b + b) + 3z^2(a + c\mu + c) + 6xy(a+b) + 6xz(a+c)\text{,}
      \end{multline*}
      \item $f_1 = x^2 y, f_2 = \mu z^3 + (x+ z)^3$
      \begin{equation*}
          (a\alpha + b\beta + c \gamma) \lrcorner (f_1 + f_2) = x^2(3a + b + 3c) + z^2(3a + 3c\mu + 3c) + xy \cdot 2a + xz (6a + 6c),
      \end{equation*}
      \item $f_1 = y^2 x, f_2 = \mu z^3 + (x+ z)^3$
        \begin{equation*}
            (a\alpha + b\beta + c\gamma) \lrcorner (f_1 + f_2) = x^2(3a + 3c) + y^2 a + z^2 (3a + 3\mu c + 3c) + xy \cdot 2b + xz(6a + 6c),
        \end{equation*}
      \item $f_1 = y^2(x+y), f_2 =\mu z^3 + (x + z)^3$
      \begin{equation*}
          (a\alpha + b\beta + c\gamma) \lrcorner (f_1 + f_2) = x^2 (3a + 3c)  + y^2 (a + 3b) + z^2 (3a + 3\mu c + 3c) + xy\cdot 2b + (6a + 6c)xz.
      \end{equation*}
    \end{enumerate}
      In each of these cases, from $(a\alpha + b\beta +c\gamma)  \lrcorner (f_1 + f_2) = 0$ it follows that $a = b = c = 0$, in other words, $f_1 + f_2$ depends essentially on $3$ variables. After checking the remaining finitely many cases, we can see that the only pair, in which $f_1 + f_2$ depends on two variables, is $f_1 = x^2 y, f_2 = x^2 z$.
\end{proof}

The following proposition is along \cite[Proposition~5.12]{JK22}, though we further describe the irreducible components.

\begin{proposition}\label{HF1m32graded}
  Let $H = (1,n,3,2)$ and $n\geq 4$. Then $\Hilb_H^{\mathbb{G}_m}(\AA^n, 0)$ consists of the following irreducible loci:
  \begin{enumerate}[label=(\roman*)]
    \item ideals orthogonal to two rank one cubics $x^3, y^3$ and one quadric,
    \item ideals whose inverse system contains a cubic form depending essentially on $3$ variables,
    \item ideals orthogonal to a pencil of cubics, each depending on the same set of $2$ variables, and
      exactly one of those cubics is of rank one,
    \item ideals orthogonal to a pencil of cubics, each depending essentially on the same set of $2$ variables,
    \item ideals orthogonal to the pencil $\langle x^2 y, x^2 z\rangle$ in some coordinates.
  \end{enumerate}
  The dimensions of the loci are given in the following table:
    \begin{center}
    \begin{tabular}{|c|c|}
      \hline
      (i) &  $2(n-1) + \binom{n+1}{2}-3$  \\
      \hline 
      (ii) & $3n -1$ \\
      \hline
      (iii)  & $2n -1$ \\
      \hline
      (iv) & $2n$  \\
      \hline
      (v) & $3n -5 $ \\
      \hline

    \end{tabular}
  \end{center}
  Items (iii), (iv), (v) deform to item (ii). The closures of loci (i) and (ii) are the two irreducible components of
  $\Hilb_H^{\mathbb{G}_m}(\AA^n,0)$.
\end{proposition}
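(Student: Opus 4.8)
The plan is to rephrase everything in terms of Macaulay inverse systems and then proceed in three stages: (1) stratify $\Hilb_H^{\Gmult}(\AA^n,0)$, (2) establish irreducibility and compute the dimensions of the five loci, (3) determine the containments among their closures. Throughout, a point of $\Hilb_H^{\Gmult}(\AA^n,0)$ corresponds to a graded submodule $M=I^{\perp}\subseteq S$ with $\dim M_1=n$, $\dim M_2=3$, $\dim M_3=2$; writing $W:=M_3$ for the pencil of cubics, one has $M=(R\lrcorner W)+S_1$ when $\dim(R_1\lrcorner W)=3$ and $M=(R\lrcorner W)+\langle q\rangle+S_1$ for one quadric $q$ when $\dim(R_1\lrcorner W)=2$ (these are the only options, since $2\le\dim(R_1\lrcorner W)\le\dim M_2=3$), so $M$ is determined by $W$ together with a quadric in the exceptional case.

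For the stratification (largely following \cite[Prop.~5.12]{JK22}) I would first note that a cubic $f$ depending essentially on $k\ge 4$ variables has $\dim(R_1\lrcorner f)\ge 4$, incompatible with $\dim M_2=3$; so every cubic of $W$ depends on at most $3$ variables. If some $f\in W$ is essentially trivariate, then $\dim(R_1\lrcorner f)=3$ forces $M_2=R_1\lrcorner f$ and, after a change of coordinates, $W\subseteq\KK[x_1,x_2,x_3]$, the other $n-3$ variables being supplied by degree-$1$ generators of $M$; this is locus (ii). Otherwise every cubic of $W$ lies in some $\Sym^3\langle x,y\rangle$, and Lemma~\ref{annoying_cubics} gives two cases: $W=\langle x^2y,x^2z\rangle$ (locus (v)), or $W$ is a line in a fixed $\PP(\Sym^3\langle x,y\rangle)\cong\PP^3$. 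In the latter case I would stratify that line by its intersection with the twisted cubic $v_3(\PP^1)$: two points give a secant line $W=\langle\ell_1^3,\ell_2^3\rangle$, where $\dim(R_1\lrcorner W)=2$ forces the exceptional quadric generator (locus (i)); exactly one point gives locus (iii); no point gives, via Lemma~\ref{better_pencil_form}, a pencil $\langle x^3+ay^3,\,x^3+b(x+y)^3\rangle$ with $a,b\ne 0$ whose members all have rank $2$ (locus (iv)).

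Irreducibility and the dimension table then come from explicit parametrizations. Locus (i): an unordered pair of points of $\PP^{n-1}$ together with a quadric modulo $\langle\ell_1^2,\ell_2^2\rangle$, dimension $2(n-1)+\binom{n+1}{2}-3$. Loci (iii) and (iv): fiber over $\Gr(2,n)$ with fibers the variety of lines in $\PP^3$ meeting $v_3(\PP^1)$ in exactly one point (irreducible of dimension $3$), respectively the open locus of lines disjoint from it (dimension $4$), giving $2n-1$ and $2n$. Locus (v): the $\GL_n$-orbit of $\langle x^2y,x^2z\rangle$, whose stabilizer consists of the linear maps preserving $\langle x,y,z\rangle\subset\KK^n$ and, on it, the decomposition $\langle x\rangle\oplus\langle y,z\rangle$, of dimension $5+3(n-3)+(n-3)^2$, so $\dim(\text{v})=n^2-5-3(n-3)-(n-3)^2=3n-5$. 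Locus (ii): fibers over $\Gr(3,n)$ with fiber the $n=3$ locus of pencils containing a trivariate cubic; this fiber is dominated by the $\PP^1$-bundle $\{(F,W):F\in U,\ F\in W,\ R_1\lrcorner W=R_1\lrcorner F\}$ over the set $U$ of Proposition~\ref{buchsbaum_eisenbud} — which is irreducible of dimension $8$ — with one-dimensional fibers, hence is irreducible of dimension $8$; thus $\dim(\text{ii})=3(n-3)+8=3n-1$.

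For the component structure I would show (iii), (iv), (v) $\subseteq\overline{(\text{ii})}$ by constructing, in suitable coordinates, explicit one-parameter families $W_t$ with $W_t$ essentially trivariate and $\dim(R_1\lrcorner W_t)=3$ for $t\ne 0$ and flat limit the given pencil at $t=0$. This is the technical heart of the proof and, in my view, the main obstacle: one cannot merely perturb the given (bivariate) pencil while holding $\dim(R_1\lrcorner W)=3$, since the space $R_1\lrcorner W$ of second derivatives is then rigid and forces $W$ bivariate; instead the families must let $R_1\lrcorner W_t$ move and let the number of degree-$1$ generators of the inverse system jump in the limit (alternatively, one extracts these degenerations from the associated multigraded Hilbert scheme). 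Granting this, $\Hilb_H^{\Gmult}(\AA^n,0)=\overline{(\text{i})}\cup\overline{(\text{ii})}$, and these are distinct irreducible components: $\dim(\text{i})-\dim(\text{ii})=\tfrac12(n^2-n-8)>0$ for $n\ge 4$ rules out $\overline{(\text{i})}\subseteq\overline{(\text{ii})}$, while a general pencil of locus (ii), e.g.\ $\langle x^3+y^3,\,y^3+z^3\rangle$, contains no cube, whereas every pencil parametrized by $\overline{(\text{i})}$ is a (possibly tangential) secant line of $v_3(\PP^{n-1})$ and hence meets $v_3(\PP^{n-1})$, ruling out $(\text{ii})\subseteq\overline{(\text{i})}$.
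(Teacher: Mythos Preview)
Your overall strategy matches the paper's: stratify by the shape of the cubic pencil $W=(I^\perp)_3$, invoke Lemma~\ref{annoying_cubics} for the purely bivariate case, use Proposition~\ref{buchsbaum_eisenbud} and an incidence variety over $\Gr(3,n)$ for locus~(ii), and separate (i) from (ii) by a semicontinuous invariant. Your dimension counts differ cosmetically in places (orbit--stabilizer for~(v), a lines-meeting-a-curve count for~(iii)) but yield the same numbers, and your ``no cube in the pencil'' argument for $(\mathrm{ii})\not\subseteq\overline{(\mathrm{i})}$ is a valid alternative to the paper's socle-type argument.

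The one genuine gap is exactly the step you flag with ``granting this'': you assert the existence of the families degenerating (ii) to (iii), (iv), (v) but do not produce them. These families are short and explicit, and writing them down \emph{is} the content of that step. For $(\mathrm{ii})\to(\mathrm{iv})$, a general point of~(ii) is $\Ann(x^3-ay^3,\,x^3-bz^3,\,\ell_4,\ldots,\ell_n)$; sending $z\mapsto x+y$ lands in~(iv), where by Lemma~\ref{better_pencil_form} one obtains a general point. For $(\mathrm{ii})\to(\mathrm{v})$, take $W_\lambda=\langle (x+\lambda y)^3-x^3,\ (x+\lambda z)^3-x^3\rangle$: for $\lambda\ne 0$ this pencil contains an essentially trivariate cubic and $R_1\lrcorner W_\lambda=\langle x^2,(x+\lambda y)^2,(x+\lambda z)^2\rangle$ is $3$-dimensional, while the projective limit as $\lambda\to 0$ is $\langle x^2y,\,x^2z\rangle$. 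Finally $(\mathrm{iv})\to(\mathrm{iii})$ is immediate by letting one of the two rank-$2$ cubics in the pencil degenerate to a cube. Your diagnosis that one must ``let $R_1\lrcorner W_t$ move'' and allow the number of linear dual generators to jump is correct for the passage to~(iv) (it goes from $n-3$ to $n-2$), and the first family above realizes this by collapsing $\langle x,y,z\rangle$ onto $\langle x,y\rangle$; but there is no obstruction beyond actually writing the families down.
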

\begin{proof}
We know that in this case the inverse system of an ideal in degree three $(I^{\perp})_3$ is generated by two cubic polynomials. The cubics in the pencil must depend on the same set of 3 variables, as otherwise their derivatives would span at least a 4-dimensional subspace of $(I^{\perp})_2$, contradicting $H(2)=3$. We consider the following cases: 

\textbf{Case 1.} Assume the inverse systems contains a cubic $F$ essentially in three variables, i.e.\ we analyze Case (ii). We have to pick one more cubic generator of the inverse system or, dually, choose a codimension one subspace in the space of minimal cubic generators of $\Ann(F)$. This means that
    $\Ann(F)$ has at least one minimal cubic generator, but by Proposition \ref{buchsbaum_eisenbud}, Item 1., it must have exactly
    two minimal cubic generators. Let 
    \begin{equation*}
        Z = \{[F] \in  \PP(\Sym^3 \KK^n) | \Ann(F) \text{ has two minimal cubic generators and} \dim (R/\Ann(F))_1 = 3\}.
    \end{equation*}
    The set $Z$ can be described as a fibration over $\Gr(3, n)$ with every fiber isomorphic to an non-empty
    open subset of $\sigma_3(v_3(\PP^2))$ (by Proposition \ref{buchsbaum_eisenbud}, Item 2). Thus, $Z$ is irreducible of dimension $3(n-3) + 8$. Let $Y$ be the incidence
    variety
    \begin{equation*}
      Y = \{(I, [F]) \in \Hilb_{H}^{\mathbb{G}_m}(\AA^n, 0) \times Z\ :\ I \subseteq \Ann(F)\}.
    \end{equation*}
    We denote by $\pi_1$ the projection $Y \to \Hilb_{H}^{\mathbb{G}_m}(\AA^n, 0)$ and by $\pi_2$ the  projection $Y \to Z$. Every fiber
    of $\pi_2$ is isomorphic to $\PP^1$, which follows from the fact that, if we fix $F$, a cubic essentially in three variables, we have to choose a codimension one subspace in the $2$-dimensional space of minimal cubic generators of $\Ann(F)$.  Hence, we get that $Y$ is irreducible and of dimension $\dim Z  + 1 = 3n$.
    Every fiber of $\pi_1$ is isomorphic to an open subset of $\PP^1$, hence the
    locus (ii) is irreducible and of dimension $3n -1$. 
    
    A general element of this locus is of the form $\Ann(x^3 - a y^3, x^3 - b z^3,l_1,\ldots,l_{n-3})$ for some $a,b \in \KK$ and some linear forms $l_i$. To see this, first assume $n = 3$. Then, by Proposition~\ref{buchsbaum_eisenbud}, Item~2, a general cubic  $F$ in the considered inverse system is of the form $x^3 + y^3 + z^3$, meaning that $\Ann(F) = (\alpha \beta, \beta \gamma, \alpha \gamma, \alpha^3 - \beta^3, \alpha^3 - \gamma^3)$. Choosing a one dimensional subspace of cubic generators amounts to picking an ideal of the form $(\alpha \beta, \beta \gamma, \alpha \gamma, a \alpha^3 + b \beta^3 + (-a - b) \gamma^3)$, whose inverse system is $\Ann(bx^3 - a y^3, (-a-b) x^3 - a z^3)$. What is left is rescaling $x,y,z$ and adding some linear forms in the inverse system if we increase $n$.
    \\

   \textbf{Case 2.} Now assume that there is no cubic depending essentially on $3$ variables in the pencil. We have three cases:
    \begin{enumerate}[label = \arabic*.]
      \item There are two cubics of rank one in the pencil. In this case, we get Locus (i). We have $2.\dim \Gr(1,n)=2(n-1)$ parameters for the choice of two linear forms $x,y$, and $\binom{n+1}{2}-3$ parameters for the choice of a
	quadric modulo $x^2,y^2$.
      \item There is exactly one cubic of rank one in the pencil.

	By Lemma \ref{annoying_cubics}, the cubics depend on the same set of $2$ variables. Note that their derivatives span a 3-dimensional space, so they already span $(I^{\perp})_2$. We perform the dimension count
	in two steps. First we choose a two-dimensional space in $\KK^n$, and then we have to count the number of
	inverse systems in two variables, containing exactly one power. To this end, consider the abstract join of
	lines:
     \begin{equation*}
         \overline{ \{ (L, x,y) \in \Gr(2, 4)\times v_3(\PP^1) \times \PP^3\ :\ x \in L,\ y \in L ,\ x \neq y \}  }.
     \end{equation*}
     Since every pair of points lies on exactly one line, this join is irreducible of dimension $4$. When we project onto $\Gr(2, 4)$, we get our Locus (iii). Since a general fiber of this projection is one-dimensional, we get the irreducibility and the dimension count.
      \item There are no cubics of rank one in the pencil.

	First assume that every cubic in the pencil depends essentially on the same space of variables $\langle
	x,y\rangle$. Here the parameter count is the following: we choose a subspace in $\Gr(2, n)$ and choose two general
	cubics in the subspace. This gives $2(n-2) + 4 = 2n$ choices.

	By Lemma \ref{annoying_cubics}, if the cubics depend on different sets of variables, we get the case
	$\langle x^2 y, x^2z\rangle$. For parameter count, we choose a linear form $x$ for which we have $\dim \PP^{n-1} = n-1$ parameters,  and a subspace $\langle y, z \rangle$ which counts to $\dim \Gr(2, n) = 2n -4$ parameters.
    \end{enumerate}
    In the following table, we summarize all the degenerations:
    \begin{center}
    \begin{tabular}{|c|c|c|}
      \hline
      $(ii) \to (iv)$ & $\langle x^3 - ay^3, x^3 - bz^3\rangle\to\langle x^3 - a y^3, x^3 - b(x+y)^3\rangle $ & $z\to
      x+y$ \\
      \hline 
      $(ii) \to (v)$ & $\langle(x + \lambda y )^3 - x^3, (x + \lambda z)^3 - x^3\rangle \to \langle x^2 y, x^2 z\rangle $ & $\lambda \to 0 $ \\
      \hline
      $(iv) \to (iii) $ & degenerate one of the cubics to a rank one cubic & \\
      \hline
    \end{tabular}
  \end{center}
  Notice that by Lemma~\ref{better_pencil_form}, a general pencil in Locus (iv) can be written as $\langle x^3 - a y^3, x^3 - b (x+y)^3 \rangle$ for some $a,b \in \KK$.

  By dimension comparison, we see that the closure of Locus (i) is an irreducible component of $\HilbFuncGr{H}{n}.$ The closure of Locus (ii) is an irreducible component of $\HilbFuncGr{H}{n}$, since an algebra of socle type $1^{n-3}\cdot 3^2$ cannot deform to an algebra of socle type $2\cdot 3^2$.
  \end{proof}

\subsection{Hilbert function $(1,n,2,2,1)$}

We start by recalling some geometry of binary forms. Let $v_4 : \PP(\mathbb{K}[x,y]_1) \to \PP(\mathbb{K}[x,y]_4)$ be the Veronese embedding. Let 
\begin{align*}
    \tau_2(v_4(\PP^1))  &= \overline{\{[F]\in \PP(\mathbb{K}[x,y]_4) \ :\ F = u^3 v \text{ for some linear forms } u, v \text{ on } \PP^1\}}, \\
    \sigma_2(v_4(\PP^1)) &= \overline{\{[F] \in \PP(\mathbb{K}[x,y]_4) \ :\ F = u^4 + v^4 \text{ for some linear forms } u, v \text{ on } \PP^1\}}
\end{align*}
be the tangential and the secant variety, respectively, so that
\begin{equation*}
    \tau_2(v_4(\PP^1)) \subset \sigma_2(v_4(\PP^1)).
\end{equation*}
 It is a classical fact that $\dim \tau_2(v_4(\PP^1)) = 2$ and $\dim \sigma_2(v_4(\PP^1)) = 3$.

\begin{proposition}\label{14221}
Let $H=(1,n,2,2,1)$. The Hilbert scheme $\HilbFuncGr{H}{n}$ consists of the following irreducible loci:
\begin{enumerate}[label=(\roman*)]
\item\label{l4l2l'} ideals orthogonal to $x^4$ and $x^2 y$, such that $x$ and $y$ are linearly independent forms. This is an irreducible subset of dimension $\dim \Gr(2,n) + \dim \PP^1 = 2(n-2) + 1$.
\item\label{2'} ideals orthogonal to a special quartic of the form $x^3 y$, where $x$ and $y$ are linearly independent forms. This is an irreducible subset of dimension $\dim \Gr(2,n) + \dim \tau_2(v_4(\PP^1)) = 2(n-2) +2$.
\item\label{l4l'3} ideals orthogonal to $x^4$ and a cubic of the form $y^3$, where $x$ and $y$ are linearly independent forms. This is an irreducible subset of dimension $2(n -2) + 2$.
\item\label{l4+l'4} ideals orthogonal to a quartic of the form $x^4 + y^4$, for two  linearly independent forms $x,y$. This is an irreducible subset of dimension $\dim \Gr(2,n) + \dim\sigma_2(v_4(\PP^1)) = 2(n-2) + 3$.
\end{enumerate}
Moreover, we have the following diagram of degenerations:
\[\begin{tikzcd}
   & \text{\textit{(iv)}}\ar{ld}\ar{rd} & \\
   \text{\textit{(ii)}}\ar{rd} &  & \text{\textit{(iii)}}\ar{ld} \\
   & \text{\textit{(i)}} & 
\end{tikzcd}\]Hence, the scheme $\HilbFuncGr{H}{n}$ is irreducible.
\end{proposition}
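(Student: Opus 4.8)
The plan is to pass to Macaulay duality and read off the structure of the top-degree dual generator. A graded algebra $R/I$ with Hilbert function $H=(1,n,2,2,1)$ corresponds to a graded submodule $M=I^{\perp}\subseteq S$ with the same Hilbert function, and by the standing embedding-dimension convention $M_1=S_1$; let $F$ span the one-dimensional space $M_4$. The observation that drives the argument is that $\dim_{\KK}(R_1\lrcorner F)$ equals the number of variables on which $F$ depends essentially, so from $R_1\lrcorner F\subseteq M_3$ and $\dim M_3=2$ one gets that $F$ depends essentially on one or two variables. If $F$ is essentially binary, then $R_1\lrcorner F=M_3$ since both are $2$-dimensional, and since $M_2\supseteq R_2\lrcorner F$ has dimension $2$ the middle catalecticant of $F$ (a $3\times 3$ matrix in the two essential variables) has rank $2$; hence $M_2=R_2\lrcorner F$, $M=R\lrcorner F+S_1$, and the ideal is recovered from $[F]$ alone. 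The condition that this rank be at most $2$ is exactly $F\in\sigma_2(v_4(\PP^1))$ inside the corresponding $\PP^3$, so after removing the rank-$\le 1$ locus (where $F$ is a fourth power, hence essentially unary) one is left with the two orbits $F=u^4+v^4$ and $F=u^3v$ with $u,v$ independent. Fibering over $\Gr(2,n)$, the choice of $\langle u,v\rangle$, with fibre an open subset of $\sigma_2(v_4(\PP^1))$, respectively $\tau_2(v_4(\PP^1))$, realizes loci (iv) and (ii) as irreducible of the stated dimensions.

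If $F$ is essentially unary, normalize $F=x^4$; then $R_1\lrcorner F=\langle x^3\rangle$ and $M_3=\langle x^3,g\rangle$ for a minimal cubic generator $g\notin\langle x^3\rangle$. The requirement $H(2)=2$ becomes $\dim(\langle x^2\rangle+R_1\lrcorner g)=2$, which already forces $g$ to depend essentially on at most two variables; I would then test it against the classification of binary cubics (perfect cube; $\ell_1^3+\ell_2^3$; $\ell_1^2\ell_2$): when $g$ is essentially binary one first deduces $x^2\in R_1\lrcorner g$, which forces $x$ to be the base/repeated factor, and the $\ell_1^3+\ell_2^3$ subcase collapses, after replacing $g$ by the second cube, into the cube case. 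What remains is $g=y^3$ with $y\notin\langle x\rangle$, giving locus (iii) with ideal $\Ann(x^4,y^3)$, or $g=x^2y$ with $y\notin\langle x\rangle$, giving locus (i) with ideal $\Ann(x^4,x^2y)$. For locus (iii) the data is an ordered pair of independent points of $\PP^{n-1}$, recovered by reading $[x]$ off $M_4$ and $[y]$ off the two perfect cubes contained in $M_3$, so it is irreducible of dimension $2(n-1)=2(n-2)+2$. For locus (i) the substitution $y\mapsto y+\lambda x$ leaves $\Ann(x^4,x^2y)$ unchanged, so the data is $[x]\in\PP^{n-1}$ together with a point of $\PP(S_1/\langle x\rangle)$, making it irreducible of dimension $2(n-2)+1$. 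Together with the binary case, this shows $\HilbFuncGr{H}{n}$ is the union of the four loci.

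For the degeneration diagram I would use that $\HilbFuncGr{H}{n}$ is proper and that along each family below the Hilbert function stays $(1,n,2,2,1)$, so the flat limit can be computed one graded piece at a time in the relevant Grassmannian. The degeneration of locus (iv) to locus (ii) comes from $F_t=\tfrac{1}{t}\big((u+tv)^4-u^4\big)\to 4u^3v$; of locus (iv) to locus (iii) from $F_t=u^4+tv^4$, whose degree-three part $R_1\lrcorner F_t=\langle u^3,v^3\rangle$ is constant while $M_4$ tends to $\langle u^4\rangle$; of locus (ii) to locus (i) from $F_t=u^3(u+tw)$, whose degree-three part $R_1\lrcorner F_t=\langle u^3,u^2w\rangle$ is constant and $M_4\to\langle u^4\rangle$; and of locus (iii) to locus (i) from the family $M^{(t)}=R\lrcorner x^4+R\lrcorner(x+tz)^3+S_1$, whose degree-three part $\langle x^3,(x+tz)^3\rangle=\langle x^3,\,3x^2z+3txz^2+t^2z^3\rangle$ tends to $\langle x^3,x^2z\rangle$. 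Consequently loci (i), (ii) and (iii) all lie in the closure of locus (iv), so $\HilbFuncGr{H}{n}$ equals that closure and is irreducible.

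I expect the main obstacle to be the bookkeeping in the essentially-unary case: one must enumerate all minimal cubic generators $g$ compatible with $H(2)=2$, recognize when two choices of $g$ span the same module (which is exactly what merges the $\ell_1^3+\ell_2^3$ subcase into locus (iii)), and pin down the correct parametrization, in particular the fact that in locus (i) the form $y$ matters only modulo $x$, which forces the dimension to be $2(n-2)+1$ rather than $2(n-2)+2$. The other point requiring care is to check that the four families above are genuinely flat with the claimed limits, that is, to compute those limits in the Grassmannian with the correct rescalings rather than by naively setting $t=0$.
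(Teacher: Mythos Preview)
Your proof is correct and follows essentially the same route as the paper's: split according to whether the quartic generator $F$ is essentially binary or unary, identify the binary case with the stratification $\sigma_2(v_4(\PP^1))\supset\tau_2(v_4(\PP^1))$ fibered over $\Gr(2,n)$, classify the extra cubic generator in the unary case, and exhibit the same four one-parameter degenerations. Your treatment of the unary case is in fact more explicit than the paper's---you justify carefully why $g$ lies in $\Sym^3\langle x,y\rangle$ for some $y$ (via $x^2\in R_1\lrcorner g$ and the conic argument ruling out the $\ell_1^3+\ell_2^3$ form unless one summand is $x^3$), whereas the paper simply asserts that $g$ can be taken as $y^3$ or $x^2y$.
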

\begin{proof}
    The degree 4 part of the inverse system of an ideal is spanned by a quartic form in two variables, since the space of its partial derivatives is at most two-dimensional. Therefore we first pick a two-dimensional subspace $W\subseteq \mathbb{K}^n$.
    We have either of the two following cases:
    \begin{enumerate}[label=(\alph*)]
    \item the inverse system consists of a degree $4$ form in $\Sym^4 W$ depending essentially on two variables, and some forms of degree $1$.
    \item the inverse system consists of a degree $4$ form $x^4 \in\Sym^4 W$, a form $g \in \Sym^3 W$ such that $x^3$ and $g$ are linearly independent, and some forms of degree $1$.
    \end{enumerate}
    In Case (a), we choose a form in $\Sym^4 W$ whose apolar has Hilbert function $(1,2,2,2,1)$, or equivalently, a form in the second secant variety of the fourth Veronese of $\PP^1$. Depending on whether we choose a form of rank two or rank four, we get the subsets \ref{l4+l'4} or \ref{2'}. The subsets are irreducible, because they are bundles over $\Gr(2, n)$ with each fiber isomorphic $\sigma_2(v_4(\PP^1))\setminus \tau_2(v_4(\PP^1))$ or $\tau_2(v_4(\PP^1))$, respectively. Since the dimensions of the secant and the tangential variety are $3$ and $2$, respectively, we get the description of the irreducible subsets \ref{l4+l'4} and \ref{2'}.

    In Case (b), we choose a form $x$ in $\Sym^4 W$ whose apolar has Hilbert function $(1,1,1,1,1)$, so we must choose another form in the third graded piece as well. As $H(2)=2$,  we can pick either $y^3$ or $x^2 y$. In the former case, we have to choose forms $x$ and $y$, so it is a two dimensional set of choices. In the latter case, the choice of the form $y$ does not matter, because
    \begin{equation*}
        a x^3 + b x^2 y = x^2(a x + by)\text{,}
    \end{equation*}
    so when we choose one linear form $y$, it is the same as if we chose any other form $ax + by$. Thus we get irreducible subsets \ref{l4l'3} and \ref{l4l2l'}, as desired.

    As for degenerations, we know that \ref{l4+l'4} degenerates to \ref{2'}. Moreover,
    \begin{equation*}
      \Ann(x^4, (a x + y)^3) = \Ann(x^4, a^3x^3 + 3a^2 x^2 y + 3 axy^2 + y^3) = \Ann(x^4, x^2 y + \frac{1}{a} xy^2 + \frac{1}{3a^2}  y^3)\text{,}
    \end{equation*}
    which goes to $\Ann(x^4, x^2 y)$ when $a \to \infty$. Thus \ref{l4l'3} degenerates to \ref{l4l2l'}. Moreover, the ideal \ref{l4+l'4} degenerates to \ref{l4l'3} with $\Ann(x^4 + ay^4)\to \Ann(x^4, y^3)$ when $a\to 0$. Similarly, \ref{2'} goes to \ref{l4l2l'} with $\Ann(x^3 (x + ay))\to \Ann(x^4, x^2 y)$ when $a \to 0$.
\end{proof}
\begin{proposition}\label{fiberdim} Let $n = 4$. Fibers over each of the given irreducible subsets in Proposition~\ref{14221} are irreducible. The fibers over \ref{l4l2l'}, \ref{l4l'3} are $19$-dimensional, yet the fibers over \ref{2'} and \ref{l4+l'4} are $14$-dimensional. In particular,  $\HilbFunc{H}{4}$ has exactly two components $P$ and $Q$, with $P$ defined as the preimage of the loci \ref{l4l2l'} and \ref{l4l'3}, and $Q$ as the closure of the preimage of \ref{2'} and \ref{l4+l'4}, respectively. 
\end{proposition}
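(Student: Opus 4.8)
The plan is to compute, for a representative graded point $[I_0]$ in each of the four loci of Proposition~\ref{14221}, the fibre $\pi_H^{-1}([I_0])$ explicitly, and then to assemble the component structure from these fibre dimensions together with the base dimensions of Proposition~\ref{14221}. By Macaulay's duality the fibre $\pi_H^{-1}([I_0])$ is identified with the set of colength-$10$ ideals $I\subset R$ with $\inn(I)=I_0$, equivalently with the set of $R$-submodules $M\subseteq S_{\le 4}$ of $\KK$-dimension $10$ whose module of top-degree forms is $M_0:=I_0^{\perp}$. For $n=4$ I would fix coordinates so that $M_0$ is generated by dual generators written in two essential variables $x,y$ together with two further variables $z,w$ occurring only linearly; an element of the fibre is then given by a choice of lift $\widetilde m=m+(\text{strictly lower-degree terms})$ of each minimal dual generator $m$ of $M_0$, subject to the open condition that no unexpected degree drop occurs (which cuts out exactly $\gr(M)=M_0$), and modulo the equivalence identifying two tuples of lifts that span the same $M$. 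The aim of the computation is to show that, after accounting for this equivalence and the open condition, the fibre is an open subscheme of an affine space, in particular irreducible, of an explicitly computed dimension.

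For loci \ref{2'} and \ref{l4+l'4} the module $M_0$ is minimally generated by a single quartic $q$ (namely $q=x^{3}y$, resp.\ $q=x^{4}+y^{4}$) together with $z,w$, so the socle type is $1^{2}4^{1}$. Running the above with $\widetilde q=q+f_3+f_2+f_1+f_0$ — where the $z,w$-part of $f_1$ and the constant $f_0$ get absorbed by the linear generators — should give an irreducible fibre of dimension $14$ in both cases. For loci \ref{l4l2l'} and \ref{l4l'3} the module $M_0$ has an additional cubic minimal generator ($x^{2}y$, resp.\ $y^{3}$), so the socle type is $1^{2}3^{1}4^{1}$; the extra freedom of lifting that cubic generator should raise the fibre dimension to $19$, again irreducible. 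As a cross-check, once a fibre is known to be smooth its dimension must equal $\dim_{\KK}\Hom_R(I_0,R/I_0)_{<0}$, which is computable from the (small) free resolutions involved.

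To assemble the components, write $W_1,W_2,W_3,W_4$ for the loci \ref{l4l2l'}, \ref{2'}, \ref{l4l'3}, \ref{l4+l'4}. For $n=4$ their dimensions are $5,6,6,7$ by Proposition~\ref{14221}, so — using that a morphism onto an irreducible base whose fibres are all irreducible of one fixed dimension has irreducible total space — each $\pi_H^{-1}(W_i)$ is irreducible, of dimension $24,20,25,21$ respectively. The $W_i$ stratify $\HilbFuncGr{H}{4}$, and $\overline{W_3}=W_3\cup W_1$ is closed, so $P:=\pi_H^{-1}(\overline{W_3})=\pi_H^{-1}(W_3\cup W_1)$ is closed; promoting the graded degeneration $W_3\rightsquigarrow W_1$ of Proposition~\ref{14221} to the non-graded level (carrying a fixed lift along that one-parameter family, using openness of the Hilbert-function condition) gives $\pi_H^{-1}(W_1)\subseteq\overline{\pi_H^{-1}(W_3)}$, so $P$ is irreducible of dimension $25$. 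Similarly $\pi_H^{-1}(W_2)\subseteq\overline{\pi_H^{-1}(W_4)}=:Q$, irreducible of dimension $21$. Since the $W_i$ cover $\HilbFuncGr{H}{4}$ we get $\HilbFunc{H}{4}=P\cup Q$; moreover $P\not\subseteq Q$ by dimension ($25>21$), while $Q\not\subseteq P$ because $Q$ contains $\pi_H^{-1}(W_4)$, which lies over $W_4$ and $W_4\cap\overline{W_3}=\varnothing$. Hence $P$ and $Q$ are exactly the two irreducible components of $\HilbFunc{H}{4}$.

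The main obstacle is the explicit fibre computation of the second paragraph: correctly isolating the genuine parameters among the lower-order coefficients of the lifts — quotienting by the equivalence of lifts and imposing the no-degree-drop conditions — so as to land precisely on $14$ and $19$ and to see irreducibility. A secondary and more routine point is the promotion of the graded degenerations of Proposition~\ref{14221} to the non-graded fibres, which is exactly what upgrades "four irreducible strata" to "exactly two components".
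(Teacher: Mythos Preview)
Your overall strategy is sound and parallels the paper's, but there are two points worth flagging.

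\textbf{The fibre computation.} The paper works on the ideal side rather than the dual side: for each representative $I_0$ it writes each minimal generator of $I_0$ (eight quadrics and one extra generator) with an arbitrary tail in $(R/I_0)_{>0}$, then imposes the linear syzygy constraints among the quadric generators (only those syzygies that can feed back into $(R/I_0)_{>0}$ matter). This gives $24$ free tail parameters minus $5$ linear constraints over \ref{l4l2l'},\ref{l4l'3} and $24-10$ over \ref{2'},\ref{l4+l'4}, exhibiting the fibre concretely as an affine space. Your dual-side plan is legitimate, but the sentence ``the $z,w$-part of $f_1$ and the constant $f_0$ get absorbed'' is far from the whole equivalence you must quotient by: two lifts $\tilde q$ give the same $M$ whenever they differ by any element of $R_{\ge 1}\lrcorner\tilde q + \langle z,w,1\rangle$, and that already accounts for an $8$- or $9$-dimensional ambiguity in $\tilde q$ alone, before one even checks the $R$-module closure conditions. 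Getting exactly $14$ and $19$ this way is doable but noticeably more delicate than the ideal-side bookkeeping; your $\Hom_R(I_0,R/I_0)_{<0}$ cross-check is the cleanest dual-side substitute.

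\textbf{The component assembly.} Your ``promote the graded degeneration by carrying a fixed lift'' step is not justified as written: the generators of $I_0$ change discontinuously along $W_3\rightsquigarrow W_1$, so there is no obvious way to transport a given tail. But you do not need this. You already invoke the fact that a surjection onto an irreducible base with all fibres irreducible of one fixed dimension has irreducible total space. Apply it not to each $W_i$ separately but directly to $\overline{W_3}=W_1\cup W_3$ (irreducible, all fibres $\cong\AA^{19}$ since each $W_i$ is a single $\GL_4$-orbit and $\pi_H$ is $\GL_4$-equivariant) and to the open complement $W_2\cup W_4$ (irreducible open in $\HilbFuncGr{H}{4}$, all fibres $\cong\AA^{14}$). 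This immediately gives $P$ and $\pi_H^{-1}(W_2\cup W_4)$ irreducible, hence $Q=\overline{\pi_H^{-1}(W_2\cup W_4)}$ irreducible, and your remaining non-containment arguments finish the proof. The paper itself compresses this last part into ``a direct dimension count,'' so your version is in fact more explicit once the fix above is made.
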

\begin{proof}
Let $R = \KK[\alpha, \beta,\gamma, \delta]$ and the dual ring is $S = \KK[x,y,z,w]$ as in Subsection~\ref{Macaulay's Inverse System}.


Suppose we are in Case \ref{l4l2l'}. We show the fibers are irreducible of the same dimension. First, let $J$ be in the fiber over \begin{equation*}
    I=\Ann(x^4,x^2y,z,w) = (\alpha^5, \alpha^3 \beta, \beta^2, \gamma \alpha, \gamma \beta, \gamma^2, \delta \alpha, \delta \beta, \delta^2, \delta \gamma)\text{,}
\end{equation*}
which must be generated by $\mathfrak{m}^5$ and the polynomials
\begin{align*}
    &\hspace{5cm} \theta_0 = \beta^2 + a_0 \alpha^3 + b_0 \alpha^2 \beta + c_0 \alpha^4 \\
    & \theta_1=\gamma \alpha + a_1 \alpha^3+b_1\alpha^2 \beta +c_1\alpha^4 &\quad \theta_2=\delta \alpha + a_2 \alpha^3 + b_2 \alpha^2 \beta + c_2 \alpha^4\\
    &\theta_3=\gamma\beta + a_3 \alpha^3+b_3\alpha^2 \beta +c_3\alpha^4&\quad \theta_4=\delta \beta + a_4 \alpha^3+b_4\alpha^2 \beta +c_4\alpha^4\\
    &\theta_5=\gamma^2 + a_5 \alpha^3+b_5\alpha^2 \beta +c_5\alpha^4 &\quad \theta_6=\delta^2 +a_6 \alpha^3+b_6\alpha^2 \beta +c_6\alpha^4 \\
    &\hspace{5cm} \theta_7=\gamma\delta + a_7 \alpha^3+b_7\alpha^2 \beta +c_7\alpha^4 \\
    &\hspace{5cm}  \zeta =\alpha^3 \beta
\end{align*}
The coefficients $a_i,b_i,c_i \in \KK$ are not free, as we require $I$ to be the initial ideal of $J$ with respect to the weight vector $\tiny{(-1,-1,-1,-1)}$. In fact, since the monomials $\alpha^j\beta,\alpha^j\gamma,\alpha^j \delta$ are in $J$ for $j\geq 3$, and $\mathfrak{m}^{5}\subset J$, it is necessary to only look at the linear syzygies among the quadric generators of $I$, involving $\alpha$, to get required conditions on the coefficients for having $I=\inn_{_{(-1,-1,-1,-1)}}(J)$. In fact, we must have $\beta \theta_1 - \alpha \theta_3, \gamma \theta_1 - \alpha \theta_5, \delta \theta_1 - \alpha \theta_7, \beta \theta_2 - \alpha \theta_4, \delta \theta_2 - \alpha \theta_6, \gamma \theta_2 - \alpha \theta_7 \in J$, which implies that $a_3 = a_4 = a_5 = a_6 = a_7 = 0$. Moreover, by Buchberger Algorithm these are all the conditions. Hence, the fibers over \ref{l4l2l'} are irreducible of dimension $3\cdot 8-5=19$. 

Now let 
\begin{equation*}
    I = \Ann(x^4, y^3, z,w) = (\alpha^5, \beta^4, \alpha \beta, \gamma \alpha, \gamma \beta, \gamma^2, \delta \alpha, \delta \beta, \delta^2, \gamma \delta)\text{.}
\end{equation*}
An ideal $J$ in the fiber over $I$ is generated by $\mathfrak{m}^5$ and 
\begin{align*}
    &\hspace{5cm} \theta_0 = \alpha \beta + a_0\alpha^3 + b_0 \beta^3 + c_0 \alpha^4 \\
    & \theta_1=\gamma \alpha +a_1\alpha^3 + b_1 \beta^3 + c_1 \alpha^4 &\quad \theta_2=\delta \alpha +a_2\alpha^3 + b_2 \beta^3 + c_2 \alpha^4 \\
    &\theta_3=\gamma\beta +a_3\alpha^3 + b_3 \beta^3 + c_3 \alpha^4  &\quad \theta_4=\delta \beta +a_4\alpha^3 + b_4 \beta^3 + c_4 \alpha^4 \\
    &\theta_5=\gamma^2 +a_5\alpha^3 + b_5 \beta^3 + c_5 \alpha^4 &\quad \theta_6=\delta^2 +a_6\alpha^3 + b_6 \beta^3 + c_6 \alpha^4   \\
    &\hspace{5cm} \theta_7=\gamma\delta +a_7\alpha^3 + b_7 \beta^3 + c_7 \alpha^4  \\
    &\hspace{5cm}  \zeta = \beta^4
\end{align*}
With the same argument, one needs to look at the linear syzygies among the eight quadric generators $\alpha \beta, \gamma \alpha, \gamma \beta, \gamma^2, \delta \alpha, \delta \beta, \delta^2, \gamma \delta$ of the ideal $I$, but only those involving $\alpha$. Those syzygies are $\gamma \theta_0 - \alpha \theta_3, \delta \theta_0 - \alpha \theta_4, \beta \theta_1 - \alpha \theta_3, \gamma \theta_1 - \alpha \theta_5, \delta \theta_1 - \alpha \theta_7, \beta \theta_2 - \alpha \theta_4, \delta \theta_2- \alpha \theta_6, \gamma \theta_2 - \alpha \theta_7$ implying $a_3 = a_4 = a_5 = a_6 = a_7 = 0$, and hence the fibers over the ideals in \ref{l4l'3} are irreducible of dimension $3\cdot 8-5=19$.

Now, let $J$ be in the fiber over 
\begin{equation*}
  I=\Ann(x^3 y, z,w) = (\alpha^4, \beta^2, \gamma \alpha, \gamma \beta, \gamma^2, \delta \alpha, \delta \beta, \delta^2, \gamma \delta)
\end{equation*}
in the component \ref{2'}, which must be generated by $\mathfrak{m}^5$ and the generators
\begin{align*}
    &\hspace{5cm} \theta_0 = \beta^2 +  a_0 \alpha^3 + b_0 \alpha^2 \beta + c_0 \alpha^3 \beta\\
    & \theta_1=\gamma \alpha + a_1 \alpha^3 + b_1 \alpha^2 \beta + c_1 \alpha^3 \beta &\quad \theta_2=\delta \alpha +a_2 \alpha^3 + b_2 \alpha^2 \beta + c_2 \alpha^3 \beta \\
    &\theta_3=\gamma\beta + a_3 \alpha^3 + b_3 \alpha^2 \beta + c_3 \alpha^3 \beta &\quad \theta_4=\delta \beta + a_4 \alpha^3 + b_4 \alpha^2 \beta + c_4 \alpha^3 \beta \\
    &\theta_5=\gamma^2 +a_5 \alpha^3 + b_5 \alpha^2 \beta + c_5 \alpha^3 \beta &\quad \theta_6=\delta^2 +a_6 \alpha^3 + b_6 \alpha^2 \beta + c_6 \alpha^3 \beta   \\
    &\hspace{5cm} \theta_7=\gamma\delta +a_7 \alpha^3 + b_7 \alpha^2 \beta + c_7 \alpha^3 \beta \\
    &\hspace{5cm}  \zeta = \alpha^4
\end{align*}
In this case, the linear syzygies involving $\alpha$ or $\beta$ are: $ 
\gamma \theta_0 - \beta \theta_3,
\delta \theta_0 - \beta \theta_4,
\beta \theta_1 - \alpha \theta_3,
\gamma \theta_1 - \alpha \theta_5,
\delta \theta_1 - \alpha \theta_7,
\beta \theta_2 - \alpha \theta_4,
\delta \theta_2 - \alpha \theta_6,
\gamma \theta_2 - \alpha \theta_7,
\gamma \theta_3 - \beta \theta_5,
\delta \theta_3 - \beta \theta_7,
\delta \theta_4 - \beta \theta_6,
\gamma \theta_4 - \beta \theta_7.
$
Therefore, the conditions $a_3, a_4, a_5, a_6, a_7, b_5, b_6, b_7, a_1 - b_3, a_2 - b_4 = 0$ are necessary and sufficient to have $I=\inn_{_{(-1,\ldots,-1)}}(J)$. Hence, the fibers are irreducible of dimension $3\cdot 8-10=14$. 

Let $J$ be the fiber over
\begin{equation*}
  I = \Ann(x^4 + y^4, z, w) = (\alpha^4 - \beta^4, \alpha\beta, \gamma \alpha, \gamma \beta,\gamma^2,
  \delta \alpha, \delta \beta, \delta^2, \gamma \delta)
\end{equation*}
in the component \ref{l4+l'4}, which must be generated by $\mathfrak{m}^5$ and the generators
\begin{align*}
    &\hspace{5cm} \theta_0 = \alpha\beta +  a_0 \alpha^3 + b_0 \beta^3 + c_0 \alpha^4 \\
    & \theta_1=\gamma \alpha + a_1 \alpha^3 + b_1 \beta^3 + c_1 \alpha^4  &\quad \theta_2=\delta \alpha +a_2 \alpha^3 + b_2 \beta^3 + c_2 \alpha^4 \\
    &\theta_3=\gamma\beta + a_3 \alpha^3 + b_3 \beta^3+ c_3 \alpha^4 &\quad \theta_4=\delta \beta + a_4 \alpha^3 + b_4 \beta^3 + c_4 \alpha^4  \\
    &\theta_5=\gamma^2 +a_5 \alpha^3 + b_5 \beta^3 + c_5 \alpha^4 &\quad \theta_6=\delta^2 +a_6 \alpha^3 + b_6 \beta^3 + c_6 \alpha^4 \\
    &\hspace{5cm} \theta_7=\gamma\delta +a_7 \alpha^3 + b_7 \beta^3 + c_7 \alpha^4 \\
    &\hspace{5cm}  \zeta = \alpha^4 - \beta^4\text{.}
\end{align*}
In this case, the linear syzygies involving $\alpha$ or $\beta$ are $\gamma \theta_0 - \beta \theta_1, \delta \theta_0 - \beta \theta_2, \gamma \theta_0 - \alpha \theta_3, \delta \theta_0 - \alpha \theta_4, \beta \theta_1 - \alpha \theta_3, \gamma \theta_1 - \alpha \theta_5, \gamma \theta_3 - \beta \theta_5, \delta \theta_2 - \alpha \theta_6, \delta \theta_4 - \beta \theta_6, \beta \theta_2 - \alpha \theta_4, \delta \theta_1 - \alpha \theta_7, \delta \theta_3 - \beta \theta_7, \gamma \theta_2 - \alpha \theta_7, \gamma \theta_4 - \beta \theta_7$ implying $a_3, a_4, a_5, a_6, a_7, b_1, b_2, b_5,b_6, b_7 = 0$. Therefore, the fibers are irreducible of dimension $24 - 10 = 14$.\\
The statement on the components follows from a direct dimension count using Proposition~\ref{14221} and adding up the dimension of each locus with the dimension of the fibers above.
\end{proof}
\begin{proposition}\label{Sm14221}
    Every local algebra with the Hilbert function $H=(1,4,2,2,1)$ is smoothable.
    
\end{proposition}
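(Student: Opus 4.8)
The plan is to reduce to the two irreducible components singled out in Proposition~\ref{fiberdim} and, in each case, to peel off a reduced point by a ray degeneration so as to land in $\Hilb_9(\AA^3)$, which is irreducible by Theorem~\ref{11points}. Since $\HilbFuncSm{10}{4}$ is closed in $\Hilb_{10}(\AA^4)$ and $\HilbFunc{H}{4}$ is the union of the irreducible components $P$ and $Q$, it suffices to prove that a general member of $P$ and a general member of $Q$ is smoothable.

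So let $\cA = R/I$ be a general member of $P$ (resp.\ $Q$). Comparing dimensions in Propositions~\ref{14221} and~\ref{fiberdim}, $\gr\cA$ lies over locus~\ref{l4l'3} (resp.~\ref{l4+l'4}), so in suitable coordinates $\alpha,\beta,\gamma,\delta$ we have $\inn(I) = \Ann(x^4,y^3,z,w)$ (resp.\ $\Ann(x^4+y^4,z,w)$); in particular $\inn(I)$ contains $\gamma^2,\gamma\alpha,\gamma\beta,\gamma\delta$ but no linear form, so $\rord_\gamma(I) = 2$ and $I$ admits a ray decomposition $I = (I\cap\frakp_\gamma) + (\gamma^2 - q)\hatR$ with $q\in R\cap\frakp_\gamma$. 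I would then form the associated lower ray family over $\KK[t]$ and verify it is flat of relative length $10$. The mechanism is clearest on the graded model, where for $t=\lambda\neq 0$ the relation $\gamma(\gamma-\lambda)=0$, together with $\gamma\alpha,\gamma\beta,\gamma\delta$, splits the fiber (via an idempotent $\approx\gamma/\lambda$) as the disjoint union of a reduced point with $\Apolar(x^4,y^3,w)$ (resp.\ $\Apolar(x^4+y^4,w)$), a length-$9$ local algebra of embedding dimension $3$. For a general $\cA$ the same ray decomposition is available, the lower ray family is again flat, and its generic fiber is $\cA'\sqcup\{\mathrm{pt}\}$ with $\cA'$ a local algebra of length $9$ and embedding dimension $\le 3$.

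Finally, $\cA'$ is a point of $\Hilb_9(\AA^3)$, which is irreducible by Theorem~\ref{11points}, hence $\cA'$ is smoothable; therefore $\cA'\sqcup\{\mathrm{pt}\}$ is smoothable (a disjoint union of a smoothable scheme and a reduced point in general position is smoothable), and so is its flat limit $\cA$. As $\cA$ ranges over a dense open subset of $P$ (resp.\ $Q$) and $\HilbFuncSm{10}{4}$ is closed, we get $P,Q\subseteq\HilbFuncSm{10}{4}$, i.e.\ every local algebra with Hilbert function $(1,4,2,2,1)$ is smoothable. The main obstacle is precisely the flatness of the lower ray family together with the exact shape of its generic fiber — that exactly one reduced point splits off, leaving a length-$9$ local algebra of embedding dimension $\le 3$ — which I would justify using the ray-family formalism of \cite{casnati_jelisiejew_notari} and the fact that $\rord_\gamma$ is constantly $2$; the remainder is the dimension bookkeeping of Propositions~\ref{14221}–\ref{fiberdim} and the cited irreducibility statements.
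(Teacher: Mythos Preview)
There is a genuine gap: for a \emph{general} member of $P$ (and likewise of $Q$) the lower ray family with respect to $\gamma$ is \emph{not} flat, contrary to your claim. The problem is hidden in the higher-degree ``tails'' of the generators recorded in Proposition~\ref{fiberdim}. Take a general $I$ in the fiber over $\Ann(x^4,y^3,z,w)$: among the generators of $J_R=I\cap\frakp_\gamma$ one finds
\[
\theta_1=\gamma\alpha+a_1\alpha^3+b_1\beta^3+c_1\alpha^4,\quad
\theta_3=\gamma\beta+b_3\beta^3+c_3\alpha^4,\quad
\theta_7=\gamma\delta+b_7\beta^3+c_7\alpha^4,
\]
and one checks that, modulo $J_R$, the ideal $\frakp_\gamma\cdot(\gamma^2-\lambda\gamma-q)$ is generated by $\lambda(a_1\alpha^3+b_1\beta^3+c_1\alpha^4)$, $\lambda(b_3\beta^3+c_3\alpha^4)$, $\lambda(b_7\beta^3+c_7\alpha^4)$ and $\lambda a_1\alpha^4$. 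For generic parameters these span $\langle\alpha^3,\beta^3,\alpha^4\rangle$. Since $\dim_\KK\frakp_\gamma/J_R=8$, the short exact sequence
\[
0\to \frakp_\gamma/\bigl(J_R+\frakp_\gamma(\gamma^2-\lambda\gamma-q)\bigr)\to R/\bigl(J_R+(\gamma^2-\lambda\gamma-q)\bigr)\to \KK[\gamma]/(\gamma^2-\lambda\gamma)\to 0
\]
gives $\dim_\KK R/(J_R+(\gamma^2-\lambda\gamma-q))=(8-3)+2=7<10$. Equivalently: at the origin $\gamma-\lambda$ is a unit, so $\gamma\in\mathfrak m^3$, and then $\theta_1,\theta_3,\theta_7$ force $\alpha^3,\beta^3,\alpha^4$ into the local ideal, collapsing the length there to $6$; adding the reduced point at $(0,0,\lambda,0)$ gives $7$. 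Your family is flat only on the very special locus where all these tails vanish (in particular for the graded ideal itself), but that locus is not dense in $P$, and you have not shown any such point is smooth in $\Hilb_{10}(\AA^4)$, so smoothability cannot be propagated to all of $P$ from there.

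This is precisely why the paper argues differently. For $Q$ it exhibits the Gorenstein point $\Ann(x^4+y^4+zw)$ in $Q$ and invokes openness of the Gorenstein locus together with Theorem~\ref{Gorenstein}. For $P$ it selects a specific, carefully chosen ideal in the fiber over $\Ann(x^4,y^3,z,w)$, realizes it as an initial degeneration of $\{\text{point}\}\sqcup\Spec(R/J_2)$ with $J_2$ of Hilbert function $(1,4,2,2)$ (smoothable by Proposition~\ref{Hf:1n22}; note the residual piece keeps embedding dimension~$4$, not~$3$), and then verifies that this ideal is a \emph{smooth} point of $\Hilb_{10}(\AA^4)$, which forces the entire component $P$ into the smoothable locus.
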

\begin{proof}
By Proposition \ref{fiberdim}, $\HilbFunc{H}{4}$ has two components $P$ and $Q$. We note that the component $Q$ has a non-empty intersection with the Gorenstein locus as it contains an ideal $\Ann(x^4+y^4+zw)$ in the fiber over $\Ann(x^4+y^4,z,w)$. Since the Gorenstein locus is an open subset of $\Hilb_{10}(\mathbb{A}^n)$, Theorem \ref{Gorenstein} implies that the whole component is inside $\Hilb_{10}^{sm}(\mathbb{A}^n)$.

We show that component $P$ is inside the smoothable locus. We consider the ideal
$$ I=(\delta^2-\alpha^4, \gamma \delta-\beta^3, \beta\delta, \alpha\delta, \gamma^2, \beta\gamma, \alpha\gamma, \alpha\beta, \beta^4)+\mathfrak{m}^5$$
in the fiber above the point $\Ann(x^4,y^3,z,w)$.
Setting
\begin{align*}
    J_1&=(\alpha+1,\beta,\gamma,\delta)\\
    J_2&=(\delta^2-\alpha^3, \gamma\delta-\beta^3, \beta\delta, \alpha\delta, \gamma^2, \beta\gamma, \alpha\gamma, \alpha\beta, \beta^4)
\end{align*}
the ideal $J_2$ has Hilbert function $(1,4,2,2)$ and hence it is smoothable by Proposition \ref{Hf:1n22}. Moreover, $I=\inn_{(1,1,1,2)}(J_1\cap J_2)$ and by a direct check $I$ is a smooth point of the Hilbert scheme $\Hilb_{10}(\mathbb{A}^4)$, which implies that the whole component $P$ containing $I$ is smoothable.  
\end{proof}

\section{Ray degeneration method}\label{RayMethod}
In this section, we exploit the ray degeneration method from \cite{casnati_jelisiejew_notari} to show that algebras with Hilbert functions of the form $(1,H(1),\ldots,H(c),1,\ldots,1)$ having at least $c$ $1$'s at the tail belong to a non-elementary component of $\Hilb_d(\mathbb{A}^n)$. Moreover, all those algebras with socle dimension $2$ are smoothable.

In this section, we use the notation $f^{(s)} = \frac{f^s}{s!}$ for $s \geq 1$ and $f \in S$. The formula for the derivation action becomes
\begin{equation*}
    \alpha^\mathbf{a}\lrcorner  x^{(\mathbf{b})}=\begin{cases}
        x^{(\mathbf{b}-\mathbf{a})} & \mathbf{b}\geq  \mathbf{a},\\
        0 & \mbox{otherwise},
    \end{cases}
\end{equation*}
where $\mathbf{a}, \mathbf{b} \in \mathbb{N}^n$.

Let $\cA$ be an algebra with Hilbert function $(1,H(1), H(2),\ldots, H(c),1,1,\ldots,1)$ of socle degree $s\geq 2c$, so that the inverse system of $\cA$ is generated by a polynomial of degree $s$ and a vector space of polynomials $W$ with $\deg(h)\leq c$ for every $h \in W$. By \cite[Example 4.4]{casnati_jelisiejew_notari}, after a change of coordinates, we can write $\cA = R/\Ann(x_1^{(s)} + g, W)$, where $\alpha_1^c \lrcorner g = 0$ and $\deg g \leq c+1$. Since $s-c \geq c$, we have
\begin{equation*}
    \alpha_1^{s-c}\lrcorner (x_1^{(s)} +g) = x_1^{(c)} + (\alpha_1^{s-c} \lrcorner g) = x_1^{(c)}\text{.}
\end{equation*}
Therefore we may use $x_1^{(c)}$ to modify $h \in W$ linearly, so that $\alpha_1^c \lrcorner h = 0$ for each $h \in W$. 

Let $I = \Ann(x_1^{(s)} + g, W)$ and for $J = \mathfrak{p}_1 \cap I$, let $
  I = J + (\alpha_1^{\nu} - q)$
be the ray decomposition of $I$, where $q\in \mathfrak{p}_1$ (see Subsection~\ref{subsection_ray_family}).
\begin{lemma}\label{a1g=0}
    With the above notation, we have $c+1 \leq \nu \leq s$ and $\alpha_1^{\nu-1}\lrcorner g=0$.
\end{lemma}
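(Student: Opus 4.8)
Everything hinges on one consequence of the normalization, which I would record first: since $\alpha_1^c\lrcorner g=0$ and $\deg g\le c+1$, every monomial of $g$ has $x_1$-exponent at most $c-1$, so $\alpha_1^m\lrcorner g=0$ for all $m\ge c$. This already reduces the last assertion to the lower bound: once $\nu\ge c+1$ we have $\nu-1\ge c$, and hence $\alpha_1^{\nu-1}\lrcorner g=0$. So it suffices to prove $c+1\le\nu\le s$.

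For the lower bound I would argue by contradiction. Since $\nu=\rord_1(I)$, a value $\nu\le c$ yields an element $\alpha_1^{\nu}-q\in I$ with $q\in\mathfrak{p}_1$, and we may take $q\in R$ (Subsection~\ref{subsection_ray_family}). Applying it to the dual generator, and using $\nu\le c\le s$ together with $q\lrcorner x_1^{(s)}=0$, gives
\begin{equation*}
    x_1^{(s-\nu)}+\alpha_1^{\nu}\lrcorner g-q\lrcorner g=0.
\end{equation*}
I would then compare the coefficient of the monomial $x_1^{(s-\nu)}$: it is $1$ in the first summand; it is $0$ in $\alpha_1^{\nu}\lrcorner g$, which has $x_1$-degree at most $c-1$; and it is $0$ in $q\lrcorner g$, whose pure $x_1$-part has $x_1$-degree at most $c-1$ because $q\in\mathfrak{p}_1$. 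As $s-\nu\ge s-c\ge c>c-1$, the coefficient equals $1\ne0$, contradicting the displayed identity. Hence $\nu\ge c+1$.

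For the upper bound, the cheap observation $\alpha_1^{s+1}\in I$ — it annihilates $x_1^{(s)}$, annihilates $g$ (as $s+1\ge c$), and annihilates each $h\in W$ since $\deg h\le c<s+1$ — already gives $\nu\le s+1$, and the task is to shave off one. I would do this by exhibiting $\alpha_1^{s}-q\in I$ for a suitable $q\in\mathfrak{p}_1$; unwinding, one needs $q\lrcorner g=1$ and $q\lrcorner W=0$. In the model case one takes $q=\alpha^{\mathbf{a}}$ for a top-degree monomial $x^{\mathbf{a}}$ of $g$: its $x_1$-exponent is at most $c-1<\deg g=|\mathbf{a}|$, so $\alpha^{\mathbf{a}}\in\mathfrak{p}_1$; it kills $W$ for degree reasons; and it sends $g$ to the nonzero scalar $c_{\mathbf{a}}(g)$ because $\mathbf{a}$ has maximal degree in $g$ — then rescale. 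Equivalently, one shows $x_1^{(s)}\notin I^{\perp}$ by comparing top-degree components in any expression for $x_1^{(s)}$ as an element of the inverse system $I^{\perp}=R\lrcorner(x_1^{(s)}+g)+R\lrcorner W$.

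The step I expect to be the genuine obstacle is exactly this upper bound, in the degenerate regime where $g$ can be reduced so that $x_1^{(s)}$ itself lies in the inverse system — the extreme case being $g=0$ — for then one computes $\nu=s+1$ and the stated bound simply fails. I anticipate this is excluded by the fine print of \cite[Example~4.4]{casnati_jelisiejew_notari}: one normalizes so that the top generator $x_1^{(s)}+g$ is irredundant, which is precisely the hypothesis under which the previous paragraph produces the required $q$ (and which forces $x_1^{(s)}\notin I^{\perp}$). In the complementary case $\cA$ is, up to a connected sum, a scheme supported at the origin that splits off a fat point on a line, and its smoothability and membership in a non-elementary component follow from independent and easier considerations — e.g.\ the decomposition into building blocks \cite{Iarrobino73} together with the smoothability results of Propositions~\ref{Hf:1n1...1}--\ref{Hf:1n22}. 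Granting the normalization, combining the two bounds with the opening reduction completes the argument.
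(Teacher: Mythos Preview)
Your reduction of $\alpha_1^{\nu-1}\lrcorner g=0$ to the bound $\nu\ge c+1$ is exactly the paper's final step. For the lower bound itself, both arguments start from $(\alpha_1^\nu-q)\lrcorner(x_1^{(s)}+g)=0$, but then diverge: you compare the coefficient of the pure monomial $x_1^{s-\nu}$ directly, whereas the paper applies the operator $\alpha_1^{s-\nu}$ to the identity $q\lrcorner g=x_1^{(s-\nu)}+\alpha_1^\nu\lrcorner g$ and uses commutativity of contraction to get $q\lrcorner(\alpha_1^{s-\nu}\lrcorner g)=1$, hence $\alpha_1^{s-\nu}\lrcorner g\ne 0$ and $s-\nu\le c-1$. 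Your route is correct and more hands-on; the paper's is a touch slicker and avoids the monomial bookkeeping.

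For the upper bound the approaches genuinely differ. The paper argues by contradiction in one line: if $\nu>s$ then $\alpha_1^\nu$ annihilates all dual generators, from which it reads off $I\subseteq\mathfrak{p}_1$, absurd since $R/I$ is finite-dimensional. You instead try to produce $q\in\mathfrak{p}_1$ with $\alpha_1^s-q\in I$, equivalently to show $x_1^{(s)}\notin I^\perp$; your equivalence is correct, and you are right that this is precisely where a degenerate $g$ (e.g.\ $g=0$) would give $\nu=s+1$. One concrete gap in your construction: taking $q=\alpha^{\mathbf a}$ for a top-degree monomial of $g$ kills $W$ ``for degree reasons'' only when $\deg g=c+1$; if $\deg g\le c$ the monomial need not annihilate $W$, so the model case does not cover everything you claim. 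The paper does not isolate this issue at all --- its contradiction is terse and does not pass through your analysis --- so what you buy with the constructive route is an honest view of the borderline case, at the cost of an argument you cannot close without invoking the normalization from \cite{casnati_jelisiejew_notari}. Note finally that the only downstream use of the lemma is $\alpha_1^{\nu-1}\lrcorner g=0$ together with $\nu-1\ge c$, and both survive even if $\nu=s+1$, so your worry about the sharp bound $\nu\le s$ does not propagate.
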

\begin{proof}
    First, we prove that $s \geq \nu$. Suppose that $\nu > s$, then $\alpha_1^\nu \lrcorner \langle x_1^{(s)} + g, W\rangle = 0$, and therefore $I = J + (q)$, so $I \subseteq \mathfrak{p}_1$, which is impossible as $R/I$ is a finite dimensional vector space over $\mathbb{K}$.
    
    Now we prove $\alpha_1^{\nu-1}\lrcorner g=0$. Since $(\alpha_1^\nu - q)\lrcorner (x_1^{(s)} + g) =0$, we have 
    \begin{equation*}
        q \lrcorner g = q\lrcorner (x_1^{(s)} + g )= \alpha_1^{\nu} \lrcorner (x_1^{(s)} + g) = x_1^{(s - \nu)} + \alpha_1^\nu \lrcorner g\text{.}
    \end{equation*}
    Then $\alpha_1^{s-\nu}\lrcorner (q \lrcorner g) = \alpha_1^{s-\nu}\lrcorner x_1^{(s-\nu)} + \alpha_1^s\lrcorner g = 1$. Hence, $\alpha_1^{s-\nu} \lrcorner g \neq 0$ and  $s-\nu\leq c -1$. Thus, $\nu -1 \geq s-c \geq c$ and $\alpha_1^{\nu -1 }\lrcorner g = 0$.
\end{proof}
In the following lemmas, we analyze the (upper) ray family attached to the ray decomposition $I = J + (a_1^\nu - q)$. The ideal of the fiber of this family above $t = \lambda$ is $J + (\alpha_1^{\nu}-\lambda \alpha_1^{\nu-1}-q$). In the three following lemmas, we prove that this ideal is equal to
\begin{equation*}
   (\alpha_1 - \lambda, \alpha_2,\ldots, \alpha_n) \cap \Ann( (x_1 + \lambda^{-1})^{(s-1)} -\lambda g, W)\text{.}
\end{equation*}

\begin{lemma}\label{firstinclusion}
    With the above notation, for any $\lambda\neq 0$, we have
\begin{equation*}
    J + (\alpha_1^\nu - \lambda \alpha_1^{\nu-1} -q) \subseteq (\alpha_1-\lambda,\alpha_2,\dots,\alpha_n) \cap ((\lambda^2 \alpha_1^{\nu-1} + \lambda
    q +p)+J)\text{,}
  \end{equation*}
  where $
    p = q \alpha_1 \left(1 + \frac{\alpha_1}{\lambda}+ \dots + \frac{\alpha_1^t}{\lambda^t}\right)\text{,}
  $
  and $t$ is the least natural number such that $\alpha_1^{t+2} \cdot q \in J$ and $t+1 \geq s-\nu$.
\end{lemma}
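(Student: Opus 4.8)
The plan is to reduce the statement to a claim about the single generator $\alpha_1^{\nu}-\lambda\alpha_1^{\nu-1}-q$, since $J$ is automatically contained in both ideals on the right: by the definition of a ray decomposition $J\subseteq I\cap\mathfrak{p}_1\subseteq\mathfrak{p}_1=(\alpha_2,\dots,\alpha_n)\subseteq(\alpha_1-\lambda,\alpha_2,\dots,\alpha_n)$, and $J$ visibly lies in $(\lambda^2\alpha_1^{\nu-1}+\lambda q+p)+J$. For membership in the first ideal I would simply write $\alpha_1^{\nu}-\lambda\alpha_1^{\nu-1}-q=\alpha_1^{\nu-1}(\alpha_1-\lambda)-q$: the first summand lies in $(\alpha_1-\lambda)$ and $q\in\mathfrak{p}_1$, so the whole element lies in $(\alpha_1-\lambda,\alpha_2,\dots,\alpha_n)$, as wanted.

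Membership in $(\lambda^2\alpha_1^{\nu-1}+\lambda q+p)+J$ is the computational core. First I would note that $t$ is well defined: $J$ is $\mathfrak{m}$-primary, so $\mathfrak{m}^N\subseteq J$ for $N\gg 0$, and since $q\in\mathfrak{p}_1\subseteq\mathfrak{m}$ we get $\alpha_1^{N-1}q\in\mathfrak{m}^{N}\subseteq J$; hence the least $t$ with $\alpha_1^{t+2}q\in J$ and $t+1\geq s-\nu$ exists. Next, writing $u=\alpha_1/\lambda$, the auxiliary polynomial $p$ is designed exactly so that
\[
  \lambda q+p=\lambda q\,(1+u+u^2+\dots+u^{t+1}),
\]
a truncated geometric series. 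Thus, with $h:=\lambda^2\alpha_1^{\nu-1}+\lambda q+p$, multiplying by $\alpha_1-\lambda=\lambda(u-1)$ telescopes the last factor and gives
\[
  (\alpha_1-\lambda)\,h=\lambda^2\alpha_1^{\nu}-\lambda^3\alpha_1^{\nu-1}+\lambda^{-t}q\,\alpha_1^{t+2}-\lambda^2 q.
\]
Since $q\,\alpha_1^{t+2}\in J$ by the choice of $t$, this shows $(\alpha_1-\lambda)h\equiv\lambda^2(\alpha_1^{\nu}-\lambda\alpha_1^{\nu-1}-q)\pmod J$. As $\lambda\neq 0$, dividing by $\lambda^2$ yields $\alpha_1^{\nu}-\lambda\alpha_1^{\nu-1}-q\in(h)+J$, which finishes the argument.

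The only genuinely nontrivial point is identifying the right auxiliary element, i.e.\ recognizing that $\lambda q+p$ should be the partial geometric sum $\lambda q\sum_{i=0}^{t+1}u^i$ so that the factor $\alpha_1-\lambda$ collapses it modulo $J$; after that the verification is a one-line computation, and I expect no further obstacles. I also note that for this lemma only the condition $\alpha_1^{t+2}q\in J$ is used — the requirement $t+1\geq s-\nu$ built into the definition of $t$ plays no role here and is presumably needed only in the following lemmas, where the reverse inclusion and the flatness of the upper ray family are established.
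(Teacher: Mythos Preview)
Your proof is correct and follows essentially the same approach as the paper: both reduce to showing $\alpha_1^{\nu}-\lambda\alpha_1^{\nu-1}-q\in (h)+J$ and both exploit the telescoping identity $(\alpha_1-\lambda)\sum_{i=0}^{t+1}(\alpha_1/\lambda)^i=\lambda^{-t-1}\alpha_1^{t+2}-\lambda$ together with $\alpha_1^{t+2}q\in J$. Your organization is arguably a bit cleaner---you multiply $h$ by $\alpha_1-\lambda$ directly, whereas the paper reduces $\lambda^2(\alpha_1^\nu-\lambda\alpha_1^{\nu-1}-q)$ modulo $(h)+J$ in several steps---but the underlying computation is the same, and your observation that the constraint $t+1\geq s-\nu$ is unused here (it is needed only in the next lemma) is also correct.
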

\begin{proof}
     Since $J, (q) \subseteq \mathfrak{p}_1$, it is enough to prove that
  \begin{equation*}
    \alpha_1^\nu - \lambda \alpha_1^{\nu-1} -q \in J + (\lambda^2 \alpha_1^{\nu -1} +\lambda q + p)\text{.}
  \end{equation*}
   Modulo $J + (\lambda^{2} \alpha_1^{\nu-1} + \lambda q + p)$, we have
  $$\lambda^2(\alpha_1^\nu - \lambda \alpha_1^{\nu-1} -q) \equiv \lambda^2(\alpha_1^{\nu} + \frac{p}{\lambda}) 
     \equiv \alpha_1(-\lambda q - p) + \lambda p 
     = p(\lambda -\alpha_1) - \alpha_1 q \lambda.$$
It suffices to prove that $p(\lambda-\alpha_1) - \alpha_1 q \lambda \in J$. We have
  \begin{align*}
    p \lambda\left(1 -\frac{\alpha_1}{\lambda}\right) &= \lambda q \alpha_1\left(1-\frac{\alpha_1}{\lambda}\right)\left(1+\frac{\alpha_1}{\lambda}
    +\dots+ \frac{\alpha_1^t}{\lambda^t}\right) \\
	&= \lambda q \alpha_1\left(1-\frac{\alpha_1^{t+1}}{\lambda^{t+1}}\right) \equiv \lambda q \alpha_1 \pmod J\text{,}
  \end{align*}
  as desired.
\end{proof}
\begin{lemma}\label{2ndinclusion} With the above notation, for every $\lambda\neq 0$, we have
    \begin{equation*}
  (\lambda^2 \alpha_1^{\nu -1} + \lambda q + p) + J \subseteq \Ann((x_1 + \lambda^{-1})^{(s-1)} - \lambda g,W)\text{.}
  \end{equation*}
\end{lemma}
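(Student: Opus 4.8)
The plan is to prove the claimed inclusion
\begin{equation*}
  (\lambda^2 \alpha_1^{\nu-1} + \lambda q + p) + J \subseteq \Ann\bigl((x_1 + \lambda^{-1})^{(s-1)} - \lambda g, W\bigr)
\end{equation*}
by checking directly that each generator on the left annihilates each of the two dual generators on the right under the contraction (equivalently, derivation) action. Since $J \subseteq I = \Ann(x_1^{(s)} + g, W)$ and $J \subseteq \mathfrak p_1$, a typical element of $J$ already kills $W$; to see it kills $(x_1+\lambda^{-1})^{(s-1)} - \lambda g$, I would expand $(x_1+\lambda^{-1})^{(s-1)} = \sum_{j\ge 0} \lambda^{-j} x_1^{(s-1-j)}$, use that an element of $\mathfrak p_1$ only sees the $x_1$-free part, and relate $\sum_j \lambda^{-j} x_1^{(s-1-j)}$ to $x_1^{(s)} + g$ via a single application of $\alpha_1$ up to the lower-order correction coming from $g$, invoking Lemma~\ref{a1g=0} ($\alpha_1^{\nu-1}\lrcorner g = 0$, $\nu \le s$) to control which derivatives of $g$ survive. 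So the element-of-$J$ case reduces to bookkeeping with the known relations defining $I$.

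The substantive generator is $\lambda^2 \alpha_1^{\nu-1} + \lambda q + p$, and the key step is to compute its contraction against $F := (x_1+\lambda^{-1})^{(s-1)} - \lambda g$ and show the result is $0$. I would split the computation: first, $\alpha_1^{\nu-1} \lrcorner F = \alpha_1^{\nu-1}\lrcorner (x_1+\lambda^{-1})^{(s-1)} - \lambda\,\alpha_1^{\nu-1}\lrcorner g = (x_1 + \lambda^{-1})^{(s-\nu)}$ using $\alpha_1^{\nu-1}\lrcorner g = 0$; second, I would use the ray-decomposition identity $(\alpha_1^\nu - q)\lrcorner (x_1^{(s)}+g) = 0$ — which in the proof of Lemma~\ref{a1g=0} gave $q\lrcorner g = x_1^{(s-\nu)} + \alpha_1^\nu\lrcorner g$ — to rewrite $q \lrcorner g$ and hence $q \lrcorner F$; and third, handle $p = q\alpha_1(1 + \alpha_1/\lambda + \dots + \alpha_1^t/\lambda^t)$ by noting $p\lrcorner F = q\alpha_1(\sum_{k=0}^t \lambda^{-k}\alpha_1^k)\lrcorner F$, where again the geometric-series factor telescopes against the expansion of $(x_1+\lambda^{-1})^{(s-1)}$. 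Assembling the three pieces, the $\lambda^2$, $\lambda$, and lower-order terms should cancel exactly, by the same algebraic identity ($1 - \alpha_1^{t+1}/\lambda^{t+1}$ acting as the identity here because $\alpha_1^{t+1}\lrcorner F$ lands in the range already killed, using $t+1 \ge s - \nu$ and $\alpha_1^{t+2}q \in J$). It is essentially the dual/contraction version of the purely ideal-theoretic manipulation performed in Lemma~\ref{firstinclusion}.

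I would organize the write-up as: (1) reduce to showing the three stated generators (elements of $J$; and $\lambda^2\alpha_1^{\nu-1}+\lambda q+p$) annihilate both $W$ and $F$; (2) dispatch $W$ immediately, since $J \subseteq I$ annihilates $W$, and $\alpha_1^{\nu-1}$, $q$, $p$ all annihilate $W$ because $\deg h \le c$, $\alpha_1^c\lrcorner h = 0$, and $q,p \in \mathfrak p_1$ with $\nu - 1 \ge c$ (so $\alpha_1^{\nu-1}\lrcorner h = 0$ as well, and any monomial of $q$ or $p$ with a positive power of some $\alpha_j$, $j\ne 1$, kills the univariate $h$ after the $\alpha_1$-part is exhausted — this needs the normalization $\alpha_1^c\lrcorner h = 0$ arranged before the lemma); (3) do the $F$ computation for $J$ via the expansion of $(x_1+\lambda^{-1})^{(s-1)}$ plus $\alpha_1^{\nu-1}\lrcorner g = 0$; (4) do the $F$ computation for $\lambda^2\alpha_1^{\nu-1}+\lambda q + p$ as sketched above. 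The main obstacle I anticipate is step (4): keeping the indices straight in the geometric-series telescoping and making sure the correction terms involving derivatives of $g$ of intermediate order all vanish (which is exactly where $c+1 \le \nu \le s$ and $\alpha_1^{\nu-1}\lrcorner g = 0$ from Lemma~\ref{a1g=0} are used), so that the three contributions cancel with no leftover. A secondary subtlety is verifying $p \in \mathfrak p_1$ and that $p\lrcorner F$ is computed correctly given $p$ carries an overall factor $\alpha_1$ — one must track that this shifts the relevant degree by one and interacts with the $\lambda q$ term so the sum matches $-\lambda^2\alpha_1^{\nu-1}\lrcorner F$.
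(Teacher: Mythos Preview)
Your approach is essentially the paper's: verify directly that $J$ and $\lambda^2\alpha_1^{\nu-1}+\lambda q+p$ each annihilate both $F=(x_1+\lambda^{-1})^{(s-1)}-\lambda g$ and $W$, using $q\lrcorner g=x_1^{(s-\nu)}$ together with the factorization $\lambda q+p=\lambda q\bigl(1+\alpha_1/\lambda+\cdots+\alpha_1^{t+1}/\lambda^{t+1}\bigr)$ to get $(\lambda q+p)\lrcorner\lambda g=\lambda^2(x_1+\lambda^{-1})^{(s-\nu)}$, which then cancels against $\lambda^2\alpha_1^{\nu-1}\lrcorner F$.

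Two corrections. First, your justification that $q\lrcorner h=0$ for $h\in W$ is wrong: $h$ is \emph{not} univariate --- elements of $W$ may depend on all variables, so ``a monomial of $q$ with a positive power of some $\alpha_j$, $j\ne 1$, kills the univariate $h$'' is false as stated. The correct reason is that $\alpha_1^\nu-q\in I$ annihilates $h$, while $\alpha_1^\nu\lrcorner h=0$ since $\deg h\le c<\nu$; subtracting gives $q\lrcorner h=0$, and then $p\lrcorner h=0$ follows. Second, your step (3) for $J\lrcorner F$ is over-engineered. Since $J\subseteq\mathfrak p_1$, any $\theta\in J$ kills $(x_1+\lambda^{-1})^{(s-1)}$ outright (it is a polynomial in $x_1$ alone); and $\theta\lrcorner g=0$ follows from $\theta\lrcorner(x_1^{(s)}+g)=0$ together with $\theta\lrcorner x_1^{(s)}=0$. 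No expansion of $(x_1+\lambda^{-1})^{(s-1)}$ and no appeal to $\alpha_1^{\nu-1}\lrcorner g=0$ is needed at this point.
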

\begin{proof}
     Since $(\alpha_1^\nu - q) \hook (x_1^{(s)} + g) = 0$, we have $x_1^{(s-\nu)} = q \hook g$. Moreover, as $J \hook
  x_1^{(s)} = 0$ (because $J \subseteq \mathfrak{p}_1$) and $J \hook (x_1^{(s)} + g) = 0$, we get $J \hook g = 0$. This implies that
  \begin{equation*}
    J \subseteq \Ann(( x_1 + \lambda^{-1})^{(s-1)} - \lambda g,W)\text{.}
  \end{equation*}
Moreover, we have 
  \begin{align*}
    (\lambda q + p) \hook \lambda g &= \left(q\lambda\left(1 + \frac{\alpha_1}{\lambda}
    +\dots+\frac{\alpha_1^{t+1}}{\lambda^{t+1}}\right)\right) \hook (\lambda g) \\
    &= \lambda^2\left(1 + \frac{\alpha_1}{\lambda} +\dots+\frac{\alpha_1^{t+1}}{\lambda^{t+1}}\right) \hook x_1^{(s-\nu)} \\
    &= \lambda^2\left(x_1^{(s-\nu)} + \frac{x_1^{(s-\nu-1)}}{\lambda} + \dots + \frac{1}{\lambda^{s-\nu}}\right)= \lambda^2(x_1+\lambda^{-1})^{(s-\nu)}\text{.}
  \end{align*}
This together with Lemma \ref{a1g=0}, implies that
  \begin{equation*}
    (\lambda^2 \alpha_1^{\nu-1} + \lambda q + p)\hook ((x_1 + \lambda^{-1})^{(s-1)} - \lambda g) = 0\text{,}
  \end{equation*}
  and since $\alpha_1^{\nu-1}\lrcorner h=q\lrcorner h=p\lrcorner h=0$ for $h\in W$, we get
  \begin{equation*}
     \lambda^2 \alpha_1^{\nu-1} + \lambda q + p \in \Ann(( x_1 + \lambda^{-1})^{(s-1)} - \lambda g,W)\text{.}
  \end{equation*}
\end{proof}
\begin{lemma}
\label{ref:hilbert_function:lemma}
The Hilbert function of $R/\Ann((x_1 + \lambda^{-1})^{(s-1)} - \lambda g, W)$ is
 
 \begin{equation*}
     (1,H(1),\dots,H(c),1,\dots,1)
 \end{equation*}
with socle degree $s-1$. 
\end{lemma}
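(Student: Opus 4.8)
The plan is to translate the statement into a computation of the degree filtration on an explicit submodule of $S$, and then to compare that submodule with the inverse system of $\cA$ itself. Throughout write $F = (x_1+\lambda^{-1})^{(s-1)} - \lambda g$ and set $M := R\lrcorner(F,W)\subseteq S$ and $M_0 := R\lrcorner(x_1^{(s)}+g,W)\subseteq S$, the latter being the inverse system of $\cA$. The only general fact needed is Macaulay duality in the form: for a finitely generated $R$-submodule $E\subseteq S$ and any $j\ge 0$, the ideal $\Ann(E)+\mm^{j+1}$ is orthogonal to the span $E_{\le j}$ of the elements of $E$ of degree $\le j$, so that
\[
  \sum_{i=0}^{j} H_{R/\Ann(E)}(i) \;=\; \dim_\KK\bigl(R/(\Ann(E)+\mm^{j+1})\bigr) \;=\; \dim_\KK E_{\le j}.
\]
Thus the Hilbert function of $R/\Ann(F,W)$ is encoded by the numbers $\dim_\KK M_{\le j}$, while by hypothesis $\dim_\KK(M_0)_{\le j}=\sum_{i\le j}H_\cA(i)$ with $H_\cA=(1,H(1),\dots,H(c),1,\dots,1)$ of socle degree $s$.

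First I would compute $M$ and $M_0$ explicitly. Using the standing assumptions $\alpha_1^c\lrcorner g = 0$ (hence $\alpha_1^k\lrcorner g = 0$ for $k\ge c$), $\deg g\le c+1$, and $\alpha_1^c\lrcorner h = 0$, $\deg h\le c$ for $h\in W$, together with $\alpha_i\lrcorner(x_1+\lambda^{-1})^{(m)}=0$ for $i\ge 2$, a direct listing of the derivatives $\alpha^{\mathbf a}\lrcorner F$ and $\alpha^{\mathbf a}\lrcorner(x_1^{(s)}+g)$ gives
\[
  M = \operatorname{span}_\KK\{x_1^{(i)}:0\le i\le s-1-c\} + \sum_{k=0}^{c-1}\KK\bigl((x_1+\lambda^{-1})^{(s-1-k)}-\lambda\alpha_1^k\lrcorner g\bigr) + N,
\]
\[
  M_0 = \operatorname{span}_\KK\{x_1^{(i)}:0\le i\le s-c\} + \sum_{k=0}^{c-1}\KK\bigl(x_1^{(s-k)}+\alpha_1^k\lrcorner g\bigr) + N,
\]
where $N := \operatorname{span}_\KK\{\alpha^{\mathbf a}\lrcorner g : a_2+\dots+a_n\ge 1\}+R\lrcorner W$ is a submodule common to both; by $\alpha_1^c\lrcorner g = 0$ and the conditions on $W$ it is contained in $S_{\le c}$ and has degree in $x_1$ at most $c-1$.

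Then I would run the degree-by-degree comparison. Since $\alpha_1^c\lrcorner g=0$, each $\alpha_1^k\lrcorner g$ ($0\le k\le c-1$) has degree in $x_1$ at most $c-1-k$, which by $s\ge 2c$ is strictly less than $s-1-k$; hence each of the $c$ generators of $M$ above has the pure power $x_1^{(s-1-k)}$ as its unique monomial of top $x_1$-degree, and these degrees $s-1,\dots,s-c$ are pairwise distinct and all strictly larger than $c\ge\deg N$ and than $s-1-c$. Consequently, viewing $M$ as $\operatorname{span}_\KK\{x_1^{(i)}:i\le s-1-c\}+N$ together with those $c$ generators, one checks that in each degree from $c+1$ to $s-1$ exactly one new dimension is added, and in degrees $\le c$ nothing is added beyond $\operatorname{span}_\KK\{x_1^{(i)}:i\le s-1-c\}+N$; the identical analysis for $M_0$ adds one new dimension in each degree from $c+1$ to $s$. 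Hence $\dim_\KK M_{\le j}=\dim_\KK(M_0)_{\le j}$ for all $j\le s-1$ and $\dim_\KK M=\dim_\KK(M_0)_{\le s-1}=\dim_\KK M_0-1$. Feeding this back into the Macaulay duality formula yields $H_{R/\Ann(F,W)}(i)=H_\cA(i)$ for $i\le s-1$ and $=0$ for $i\ge s$, i.e. the Hilbert function $(1,H(1),\dots,H(c),1,\dots,1)$ with socle degree $s-1$.

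The main obstacle is precisely this last piece of bookkeeping: one must be sure that the pure top powers $x_1^{(s-1-k)}$ of the generators of $M$ are genuinely new in their degree — not absorbed into $N$ (true because $N$ has $x_1$-degree $\le c-1<s-1-k$) and not cancellable among themselves or against $\operatorname{span}_\KK\{x_1^{(i)}:i\le s-1-c\}$ (true because $\alpha_1^c\lrcorner g=0$ controls the $x_1$-degree of $\alpha_1^k\lrcorner g$). The hypothesis $s\ge 2c$ is exactly what makes all of these inequalities work. The only configuration where the argument degenerates is $c=1$, $s=2$, $\deg g=c+1$, in which the degree-$s$ generator no longer has a pure power of $x_1$ as leading term (and, correspondingly, the upper ray family is not flat); but there $\cA$ has Hilbert function $(1,n,1,\dots,1)$ and is smoothable by Proposition~\ref{Hf:1n1...1}, so that case never invokes the present lemma.
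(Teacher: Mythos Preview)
Your approach is the same as the paper's: both decompose the inverse system into the pure $\alpha_1$-derivatives and a common part (your $N$, the paper's $\Diff_{Y'}(\alpha_1^i\lrcorner\mathbb{A})+\Diff_Y(W)$), then read off the Hilbert function from the degree filtration. The argument is correct in substance, but two points deserve tightening.

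First, the bookkeeping claim ``in degrees $\le c$ nothing is added beyond $\operatorname{span}\{x_1^{(i)}:i\le s-1-c\}+N$'' is off when $s=2c$. In that case $s-1-(c-1)=c$, so the generator $G_{c-1}=(x_1+\lambda^{-1})^{(c)}-\lambda\,\alpha_1^{c-1}\lrcorner g$ has degree exactly $c$ and does contribute in degree $c$ (its leading term $x_1^{(c)}$ lies outside $V_1+N$, since both have $x_1$-degree $\le c-1$). This does not break the conclusion: on the $M_0$ side the corresponding extra element is the pure power $x_1^{(c)}\in V_1'=\operatorname{span}\{x_1^{(i)}:i\le s-c\}$, so $\dim M_{\le c}=\dim(M_0)_{\le c}$ still holds. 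You should phrase the comparison as ``$\dim M_{\le j}=\dim(M_0)_{\le j}$ for all $j\le s-1$'' rather than as separate counts of where generators land.

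Second, in your final caveat the assertion that the upper ray family is not flat when $c=1$, $s=2$, $\deg g=2$ is incorrect: Proposition~\ref{ref:flat:prop} establishes flatness without any such restriction, and indeed the colengths still match (the fiber is a point together with an algebra of Hilbert function $(1,n-1,1)$, total degree $n+2$). What fails in that edge case is only the \emph{Hilbert function} statement of the lemma, not flatness. Your observation that this case is irrelevant to the paper's applications is correct.
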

\begin{proof}
  
  For sets $X \subseteq R$, and $U \subseteq S$, we denote by $\Diff_X(U)$ the linear span
  \begin{equation*}
    \langle \theta \lrcorner f \ :\ \theta \in X \text{ and } f \in U \rangle\text{.}
  \end{equation*}
  We also write $\Diff(U) = \Diff_R(U)$.
  Consider the sets:
  \begin{align*}
      Y &= \{1, \alpha_1, \alpha_1^2,\ldots \}, \\
      Y' &= \{ m \in \mathbb{K}[\alpha_2,\ldots,\alpha_n] \ :\ m \text{ is a monomial, } \deg m \geq 1\}.
  \end{align*}
  Let us define an affine subspace of $S$ by $\mathbb{A} = g+ W$. We get that
  \begin{equation*}
      \Ann(x_1^{(s)}+ g, W) = \Ann(x_1^{(s)}+ \mathbb{A}).
  \end{equation*}
  We have the following decomposition
  \begin{align*}
    \Diff(x_1^{(s)} + \mathbb{A}) &= \Diff_{Y}(x_1^{(s)} + \mathbb{A})+  \sum_{i=0}^{c-1} \Diff_{\alpha_1^i \cdot Y'}(x_1^{(s)} +\mathbb{A}) \\
    &= \Diff_Y(x_1^{(s)} + g) + \Diff_Y(W) + \sum_{i=0}^{c-1} \Diff_{\alpha_1^i \cdot Y'}(x_1^{(s)} +\mathbb{A})\text{.}
  \end{align*} We calculate:
  \begin{align*}
      \Diff_Y(x_1^{(s)} + g) =  \langle & x_1^{(s)} + g, \\
                        &x_1^{(s-1)} + \alpha_1 \lrcorner g, \\
                        & \vdots \\
                        & x_1^{(s-c+1)} + \alpha_1^{c-1} \lrcorner g, \\
                        & x_1^{(s-c)}, x_1^{(s-c-1)},\ldots, x_1, 1 \rangle\text{,} 
  \end{align*}
  and
  \begin{equation*}
      \Diff_{\alpha_1^i\cdot Y'}(x_1^{(s)} + \mathbb{A}) = \Diff_{Y'}( \alpha^i_1 \lrcorner \mathbb{A} )
  \end{equation*}
  for $i = 0,1,\ldots,c-1$. 
  Similarly, we have
  \begin{equation*}
    \Diff((x_1+\lambda^{-1})^{(s-1)} - \lambda \mathbb{A}) = \Diff_Y((x_1+\lambda^{-1})^{(s-1)} - \lambda g) + \Diff_Y(W) +  \sum_{i=0}^{c-1} \Diff_{Y'}(\alpha_1^i \lrcorner \mathbb{A})\text{.}
  \end{equation*}
  We calculate:
  \begin{align*}
      \Diff_Y((x_1 + \lambda^{-1})^{(s-1)}- \lambda g) = \langle & (x_1 + \lambda^{-1})^{(s-1)} - \lambda g, \\
                        & (x_1 + \lambda^{-1})^{(s-2)} - \lambda \alpha_1 \lrcorner g, \\
                        & \vdots \\
                        & (x_1 + \lambda^{-1})^{(s-c)} - \lambda\alpha_1^{c-1} \lrcorner g, \\
                        & (x_1 + \lambda^{-1})^{(s-c-1)}, (x_1 + \lambda^{-1})^{(s-c-2)},\ldots, x_1 + \lambda^{-1}, 1 \rangle\text{.}
  \end{align*}
  For any $F \in S$, we have a filtration on $\Diff(F)$ given by 
  \begin{equation*}
     \Diff^{\leq i}(F) = \{ h \in \Diff(F) \ :\ \deg h \leq i\}. 
  \end{equation*}
  The Hilbert function of $R/\Ann(F)$ is given by the formula
  \begin{equation*}
    H_{R/\Ann(F)}(i) = \dim_{\mathbb{K}} \frac{\Diff^{\leq i}(F)}{\Diff^{\leq i-1}(F)}\text{.}
  \end{equation*}
  We compare the Hilbert functions of $R/\Ann(x_1^{(s)} +g , W)$ and $R/\Ann((x_1 + \lambda^{-1})^{(s-1)} -\lambda g, W)$. From the above formulas it follows that the Hilbert functions are the same up to degree $c$, and from degree $c+1$ on they are equal to $1$, and the number of $1$'s is one higher in $R/\Ann(x_1^{(s)} + g, W)$. Since the Hilbert function of $R/\Ann(x_1^{(s)} + g, W)$ is given by assumption, the statement of the Proposition follows.
\end{proof}
\begin{proposition}
    \label{ref:flat:prop}
    Let $s$ and $c$ be as above.
    Let $\langle x_1^{(s)} + g, W\rangle$ be a subspace of polynomials of degree at most $s$
    such that $\alpha_1^c \lrcorner g = 0$ and $\alpha_1^c \lrcorner h=0 $ for each $h \in W$. Then the ray decomposition $\Ann(x_1^{(s)} + g, W) = (\alpha_1^\nu - q) + J$, where $J = \Ann(x_1^{(s)} + g,W)\cap (\alpha_2,\dots, \alpha_n)$ gives rise to an upper ray degeneration.
\end{proposition}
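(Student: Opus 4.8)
The plan is to prove flatness by checking that every fibre of the upper ray family has the same length, namely $d := \dim_\KK \cA$. Write $B = R[t]/\bigl(J_R[t] + (\alpha_1^\nu - t\alpha_1^{\nu-1} - q)R[t]\bigr)$ for the coordinate ring of the family. First I would record that $B$ is module-finite over $\KK[t]$: since $\mm^N \subseteq I$ for $N \gg 0$ we get $\frakp_1^N \subseteq J$, so $\alpha_2,\ldots,\alpha_n$, and hence $q$, are nilpotent in $B$; therefore $\alpha_1^\nu - t\alpha_1^{\nu-1} = q$ is nilpotent in $B$, which is a monic relation for $\alpha_1$ over $\KK[t]$. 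Over the PID $\KK[t]$ a module-finite algebra is flat iff it is torsion-free iff its fibre length is constant, so it is enough to compute fibre lengths. The fibre over $t = 0$ is $R/(J_R + (\alpha_1^\nu - q)) = R/I = \cA$, of length $d$, so everything comes down to the fibres over $\lambda \neq 0$.

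Here I would invoke Lemmas \ref{firstinclusion}, \ref{2ndinclusion} and \ref{ref:hilbert_function:lemma}, which are exactly tailored to this. Let $I_\lambda := J_R + (\alpha_1^\nu - \lambda\alpha_1^{\nu-1} - q)$ be the fibre ideal over $\lambda$. Composing the inclusions of the first two lemmas gives
\[
  I_\lambda\ \subseteq\ (\alpha_1 - \lambda,\alpha_2,\ldots,\alpha_n)\ \cap\ \Ann\!\bigl((x_1 + \lambda^{-1})^{(s-1)} - \lambda g,\ W\bigr)\text{,}
\]
and the two ideals on the right are comaximal (supported respectively at the distinct points $(\lambda,0,\ldots,0)$ and the origin), so by Lemma \ref{ref:hilbert_function:lemma} the right-hand ideal has colength $1 + (d-1) = d$. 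This already gives $\dim_\KK R/I_\lambda \ge d$. For the opposite inequality I would upgrade this inclusion to an equality: rereading the proof of Lemma \ref{firstinclusion} identifies $I_\lambda$ with $(\alpha_1-\lambda,\alpha_2,\ldots,\alpha_n)\cap K'_\lambda$, where $K'_\lambda = J + (\lambda^2\alpha_1^{\nu-1} + \lambda q + p)$, and then a short manipulation — using $\alpha_1^{t+2}q\in J$, $\alpha_1^M\frakp_1\subseteq J$ for $M\gg0$, and $\alpha_1^{\nu-1}\lrcorner g = 0$ from Lemma \ref{a1g=0} — shows that $a\cdot(\lambda^2\alpha_1^{\nu-1}+\lambda q+p)\in I_\lambda$ for every $a\in(\alpha_1-\lambda,\alpha_2,\ldots,\alpha_n)$, whence $K'_\lambda\cap(\alpha_1-\lambda,\alpha_2,\ldots,\alpha_n)\subseteq I_\lambda$ and the displayed inclusion is an equality. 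So $\dim_\KK R/I_\lambda = d$ for all $\lambda\neq 0$, the fibre length is constant, and $B$ is flat over $\KK[t]$, i.e. the ray decomposition gives rise to an upper ray degeneration.

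I expect the genuine obstacle to be the upper bound $\dim_\KK R/I_\lambda \le d$: the three lemmas by themselves produce only the lower bound, and this upper bound is false in general — without the hypotheses $\alpha_1^c\lrcorner g = 0$ and $\alpha_1^c\lrcorner h = 0$ for $h\in W$ the upper ray family need not be flat at all. Those hypotheses are precisely what feed into Lemma \ref{a1g=0} (giving $c+1\le\nu\le s$ and $\alpha_1^{\nu-1}\lrcorner g = 0$), so the heart of the argument is making the reverse inclusion go through by exploiting the special shape of $g$ and $q$. A cleaner alternative I might use instead is to redo the $\Diff$-bookkeeping from the proof of Lemma \ref{ref:hilbert_function:lemma}, but now over the base ring $\KK[t]$, and directly exhibit a free $\KK[t]$-basis of $B$ of cardinality $d$ (equivalently, check that the initial ideal is unchanged under the perturbation $\alpha_1^\nu-q \rightsquigarrow \alpha_1^\nu - t\alpha_1^{\nu-1}-q$, whose extra term has $\alpha_1$-degree $\nu-1<\nu$); this both establishes flatness and makes visible that the general fibre is the algebra $\Ann((x_1 + \lambda^{-1})^{(s-1)} - \lambda g, W)$ — with one fewer $1$ in its Hilbert function — together with a reduced point.
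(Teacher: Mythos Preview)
Your approach is genuinely different from the paper's, and it has a real gap.

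\textbf{The gap.} Your ``short manipulation'' correctly shows $I_\lambda = (\alpha_1-\lambda,\alpha_2,\ldots,\alpha_n)\cap K'_\lambda$ with $K'_\lambda = J + (\lambda^2\alpha_1^{\nu-1}+\lambda q + p)$. But this does \emph{not} give the equality of your displayed inclusion, nor does it give $\dim_\KK R/I_\lambda = d$. What you need is $\dim_\KK R/K'_\lambda = d-1$, and Lemma~\ref{2ndinclusion} only yields $K'_\lambda \subseteq \Ann\bigl((x_1+\lambda^{-1})^{(s-1)}-\lambda g,W\bigr)$, hence $\dim_\KK R/K'_\lambda \ge d-1$ --- the wrong inequality for an upper bound on the fibre. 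Upper semicontinuity at $t=0$ saves you on an open neighbourhood but not at every $\lambda$, so torsion supported away from $0$ is not excluded. In fact, in the paper's logic the implication runs the other way: flatness is established first (this proposition), and only \emph{then} is the inclusion upgraded to equality by comparing colengths (Theorem~\ref{rayflat}).

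\textbf{How the paper does it, and why your ingredients suffice.} The paper never touches Lemmas~\ref{firstinclusion}--\ref{ref:hilbert_function:lemma} here. It proves flatness directly via the criterion \cite[Proposition~2.12, Remark~2.13]{casnati_jelisiejew_notari}, and the sole substantive input is the claim
\[
\gamma\cdot(\alpha_1^\nu - t\alpha_1^{\nu-1} - q)\in J[t]\quad\text{for every }\gamma\in\frakp_1,
\]
which follows immediately from $\alpha_1^{\nu-1}\gamma\in J$ (Lemma~\ref{a1g=0} plus $\nu-1\ge c$). Ironically, this is exactly the fact you verify inside your ``short manipulation''. Once you have it, you can also finish by your fibre-length route: it implies that in $R/J$ the element $\alpha_1^\nu - \lambda\alpha_1^{\nu-1} - q$ is annihilated by $\frakp_1$, so the principal ideal it generates meets $M:=\frakp_1\cdot(R/J)$ trivially and maps isomorphically onto $(\alpha_1^{\nu-1}(\alpha_1-\lambda))\subseteq\KK[\alpha_1]=R/\frakp_1$, yielding a short exact sequence $0\to M\to R/I_\lambda\to \KK[\alpha_1]/(\alpha_1^{\nu-1}(\alpha_1-\lambda))\to 0$ with both ends of dimension independent of $\lambda$. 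That closes the argument --- but note it bypasses the three lemmas entirely, which is the paper's point.
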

\begin{proof}
    First, we claim that for every $\gamma\in \mathfrak{p}_1$, we have
    $$
        \gamma \cdot (\alpha_1^{\nu} - t \alpha_1^{\nu -1} -q) \in J[t]\text{.}
    $$
    Note that $(\alpha_1^\nu-q) \lrcorner \langle x_1^{(s)} + g,W\rangle = 0$ and by Lemma \ref{a1g=0}, $\alpha_1^{\nu-1}\gamma \lrcorner (x_1^{(s)} + g) =\alpha_1^{\nu -1} \gamma\lrcorner g = 0$. Moreover, since $\nu - 1\geq c$, for any $h \in W$ we have $\alpha_1^{\nu -1} \gamma \lrcorner h = 0$. This implies that $\alpha_1^{\nu -1}\gamma \in J$ and since $(\alpha_1^\nu - q)\gamma \in J$, the claim holds.

    Let $\mathcal{J} \subseteq R[t]$ be the ideal defining the upper ray family and $\mathcal{J}_0 := \mathcal{J}\cap R$.  By  \cite[Proposition 2.12]{casnati_jelisiejew_notari}, it is sufficient to show that for any $\lambda\in \mathbb{K}$, we have $(t-\lambda)\cap\mathcal{J}\subset (t-\lambda)\mathcal{J}+\mathcal{J}_0[t]$. Let $\theta \in \mathcal{J} \cap (t-\lambda)$ be an element and write $\theta = \theta_1 + \theta_2 (\alpha_1^\nu - t\alpha_1^{\nu-1} - q)$ . By \cite[Remark 2.13]{casnati_jelisiejew_notari}, we may assume that $\theta_1 \in J$ and $\theta_2 \in R$. Since $\theta \in (t-\lambda)$, we have $\theta_1+\theta_2(\alpha_1^\nu - \lambda \alpha_1^{\nu -1} -q) = 0$. Since $\theta_1 \in \mathfrak{p}_1$, we must have $\theta_2 \in \mathfrak{p}_1$ and by the claim above, we have $\theta_2 (\alpha_1^\nu - t\alpha_1^{\nu-1} -q) \in J[t]\subseteq \mathcal{J}_0[t]$. Since $\theta_1 \in J \subseteq \mathcal{J}_0[t]$, we get that $\theta\in (t-\lambda)\mathcal{J}+\mathcal{J}_0[t]$ and the upper ray family is flat.
\end{proof}
\begin{theorem}\label{rayflat}
    Let $\cA$ be a local algebra with Hilbert function $(1,H_1, H_2,\ldots, H_c,1,1,\ldots,1)$ of socle degree $s\geq 2c$. Then the spectrum of $\cA$ deforms to a disjoint union of a point and the spectrum of a local algebra with Hilbert function $(1,H_1, H_2,\ldots, H_c, 1, 1,\ldots,1)$ and socle degree $s-1$ (the number of ones is one less).  
\end{theorem}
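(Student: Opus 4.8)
The plan is to assemble the preparatory results of this section into one flat one-parameter family whose special fibre is $\Spec\cA$ and whose general fibre is the claimed disjoint union. First I would put $\cA$ into the normal form recalled above: by \cite[Example~4.4]{casnati_jelisiejew_notari} and the reduction following it, after a linear change of coordinates we may assume $\cA\cong R/I$ with $I=\Ann(x_1^{(s)}+g,W)$, where $\deg g\le c+1$, $\alpha_1^{c}\lrcorner g=0$, and $\alpha_1^{c}\lrcorner h=0$ for every $h\in W$. I then take the ray decomposition $I=J+(\alpha_1^{\nu}-q)$ with $J=I\cap\mathfrak{p}_1$, $\nu=\rord_1(I)$, and $q\in R$. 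Lemma~\ref{a1g=0} supplies the inequalities $c+1\le\nu\le s$ and the vanishing $\alpha_1^{\nu-1}\lrcorner g=0$, which are exactly the hypotheses under which the subsequent lemmas operate.

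Next I would invoke Proposition~\ref{ref:flat:prop}: the upper ray family $\KK[t]\to R[t]/\bigl(J_R[t]+(\alpha_1^{\nu}-t\alpha_1^{\nu-1}-q)R[t]\bigr)$, with $J_R=J\cap R$, is flat over $\AA^1=\Spec\KK[t]$. Its fibre over $t=0$ has ideal $J_R+(\alpha_1^{\nu}-q)R$, whose extension to $\hatR$ is $I$ (since $I$ and $J$ are $\mm$-primary), so this fibre is $\Spec\cA$. Consequently every fibre of the family has length $d:=\dim_{\KK}\cA$.

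It remains to describe a general fibre. Fix $\lambda\ne 0$. By Lemmas~\ref{firstinclusion} and \ref{2ndinclusion}, the ideal $J+(\alpha_1^{\nu}-\lambda\alpha_1^{\nu-1}-q)$ of the fibre over $t=\lambda$ is contained in
\[
   (\alpha_1-\lambda,\alpha_2,\dots,\alpha_n)\cap\Ann\bigl((x_1+\lambda^{-1})^{(s-1)}-\lambda g,\,W\bigr).
\]
The first factor cuts out a reduced point at $(\lambda,0,\dots,0)$; the second factor is the annihilator of a nonzero finite family of polynomials, hence $\mm$-primary, supported at the origin, and by Lemma~\ref{ref:hilbert_function:lemma} it has Hilbert function $(1,H_1,\dots,H_c,1,\dots,1)$ with socle degree $s-1$, so its colength is $d-1$. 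Since for $\lambda\ne 0$ the two factors have disjoint supports, they are comaximal, and the colength of their intersection is $(d-1)+1=d$. Comparing with the length of the fibre forces the inclusion above to be an equality, so the general fibre is the disjoint union of a reduced point and the spectrum of a local algebra with Hilbert function $(1,H_1,\dots,H_c,1,\dots,1)$ and socle degree $s-1$. This is the required deformation.

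At the level of the theorem, all the substance has been pushed into the earlier results: Proposition~\ref{ref:flat:prop} (flatness of the ray family, via the flatness criterion of \cite[Proposition~2.12]{casnati_jelisiejew_notari}) and Lemmas~\ref{firstinclusion}--\ref{ref:hilbert_function:lemma} (the explicit form and Hilbert function of the general fibre). The one delicate point that remains is the bookkeeping that upgrades the set-theoretic inclusion coming from the lemmas to an \emph{equality} of fibres: one must combine that inclusion with the length count $d=(d-1)+1$ supplied by flatness and Lemma~\ref{ref:hilbert_function:lemma}, together with the observation that the annihilator of a polynomial is always $\mm$-primary, so that the point and the local algebra occur as genuinely disjoint summands of the general fibre.
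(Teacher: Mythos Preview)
Your proposal is correct and follows essentially the same approach as the paper: invoke Proposition~\ref{ref:flat:prop} for flatness of the upper ray family, use Lemmas~\ref{firstinclusion} and~\ref{2ndinclusion} to obtain the inclusion of the general fibre ideal into the intersection, and then upgrade this to an equality by the colength comparison coming from Lemma~\ref{ref:hilbert_function:lemma}. Your write-up is somewhat more explicit than the paper's (spelling out the special fibre, comaximality, and the $d=(d-1)+1$ count), but the argument is the same.
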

\begin{proof}
We use the upper ray degeneration described in Proposition \ref{ref:flat:prop}.
By Lemmas \ref{firstinclusion} and \ref{2ndinclusion}, for every $\lambda\neq 0$, we have
    \begin{equation}\label{inclu}
  J + (\alpha_1^\nu - \lambda \alpha_1^{\nu -1} -q)\subseteq
  (\alpha_1 -\lambda, \alpha_2,\dots,\alpha_n) \cap \Ann( (x_1 + \lambda^{-1})^{(s-1)} -\lambda g,W)
\end{equation}
Since the algebra corresponding to $\Ann( (x_1 + \lambda^{-1})^{(s-1)} -\lambda g,W)$ has the Hilbert function $(1,H_1, H_2,\ldots, H_c,1,1,\ldots,1)$ of socle degree $s-1$, the two ideals in Equation~\eqref{inclu} have the same colength, and we have the equality proving the statement.
\end{proof}
\begin{remark}
  We remark that in Theorem~\ref{rayflat}, an algebra $\cA$ of socle dimension $\tau$ is described as the limit of a union of a point and another  algebra of a lower degree, yet with socle dimension as $\cA$.  
\end{remark}
 Now using Proposition \ref{Hf:1n21}, and Proposition \ref{smooth1nr1} which will be established independently later, as a conclusion of Theorem \ref{rayflat}, we obtain that:
\begin{corollary}\label{rayflat_socledim2}
    Every local algebra with the Hilbert function $$(1,4,2,1,1), (1,4,3,1,1), (1,4,2,1,1,1),(1,5,2,1,1)$$ and socle dimension 2 is smoothable.
\end{corollary}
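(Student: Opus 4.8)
The plan is to obtain each of the four cases as a flat limit of smoothable schemes by applying Theorem~\ref{rayflat}, removing trailing $1$'s from the Hilbert function one at a time until we reach a Hilbert function that is already known to be smoothable. First I would record that each of the listed Hilbert functions has the shape $(1,H_1,\dots,H_c,1,\dots,1)$ with socle degree $s\ge 2c$ \emph{when $c$ is taken to be $2$}: indeed $(1,4,2,1,1)$, $(1,4,3,1,1)$ and $(1,5,2,1,1)$ have $c=2$ and $s=4=2c$, while $(1,4,2,1,1,1)$ has $c=2$ and $s=5\ge 4=2c$. Therefore Theorem~\ref{rayflat} applies to any local algebra $\cA$ with one of these Hilbert functions: $\Spec\cA$ degenerates flatly to a disjoint union $\{\mathrm{pt}\}\sqcup\Spec\mathcal{B}$, where $\mathcal{B}$ is a local algebra whose Hilbert function is obtained from that of $\cA$ by deleting one trailing $1$, and, by the remark following Theorem~\ref{rayflat}, $\mathcal{B}$ still has socle dimension $2$. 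Concretely this gives the reductions $(1,4,2,1,1)\rightsquigarrow(1,4,2,1)$, $(1,4,3,1,1)\rightsquigarrow(1,4,3,1)$, $(1,5,2,1,1)\rightsquigarrow(1,5,2,1)$, and $(1,4,2,1,1,1)\rightsquigarrow(1,4,2,1,1)$.

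Next I would invoke the smoothability results at the bottom of these chains. The Hilbert functions $(1,4,2,1)$ and $(1,5,2,1)$ are of type $(1,n,2,1)$, so the corresponding $\mathcal{B}$ is smoothable by Proposition~\ref{Hf:1n21}; the Hilbert function $(1,4,3,1)$ is of type $(1,n,3,1)$, and since $\mathcal{B}$ has socle dimension $2$ it is smoothable by Proposition~\ref{smooth1nr1}. A point is smoothable, and a disjoint union of subschemes with disjoint supports is smoothable if and only if each piece is, so $\{\mathrm{pt}\}\sqcup\Spec\mathcal{B}$ is smoothable in each of these three cases; since the smoothable locus is a closed subscheme of $\Hilb_d(\mathbb{A}^n)$ and $\Spec\cA$ is a flat limit of schemes lying in it, $\cA$ is smoothable. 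This settles the cases $(1,4,2,1,1)$, $(1,4,3,1,1)$ and $(1,5,2,1,1)$. Finally, for $(1,4,2,1,1,1)$ one applies the same reasoning once more: here $\Spec\cA$ degenerates to $\{\mathrm{pt}\}\sqcup\Spec\mathcal{B}$ with $\mathcal{B}$ of Hilbert function $(1,4,2,1,1)$, which has just been shown to be smoothable, so $\cA$ is again a flat limit of smoothable schemes, hence smoothable.

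Once Theorem~\ref{rayflat} and Propositions~\ref{Hf:1n21} and~\ref{smooth1nr1} are available, the argument is essentially bookkeeping and I do not expect a serious obstacle. The only points that need a little attention are: (i) choosing the value of $c$ correctly so that the hypothesis $s\ge 2c$ of Theorem~\ref{rayflat} holds --- the choice $c=2$ works for all four Hilbert functions, whereas none of them would satisfy the hypothesis with $c=3$; and (ii) keeping track of the fact that the ray degeneration preserves socle dimension $2$, which is exactly what is needed for Proposition~\ref{smooth1nr1} to apply to the algebra with Hilbert function $(1,4,3,1)$ produced from $(1,4,3,1,1)$.
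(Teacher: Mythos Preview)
Your argument is correct and matches the paper's own (very terse) proof, which simply cites Theorem~\ref{rayflat}, Proposition~\ref{Hf:1n21}, and Proposition~\ref{smooth1nr1}; you have accurately unpacked the details, including the choice $c=2$ needed for the hypothesis $s\ge 2c$ and the use of the remark after Theorem~\ref{rayflat} to ensure the intermediate algebra retains socle dimension~$2$ before invoking Proposition~\ref{smooth1nr1} in the $(1,4,3,1)$ case.
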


\section{Components of Hilbert schemes}\label{Components}
In this section, we determine the components of $\Hilb_d(\mathbb{A}^n)$ for $d=9,10$.

\subsection{Local algebras $(\cA,\mathfrak{n})$ with $\mathfrak{n}^3=0$}\label{Sectionn3=0}

This subsection considers the local algebras $(\cA,\mathfrak{n})$  with $\mathfrak{n}^3 = 0$. We will show that such an algebra belongs to a non-elementary component of $\Hilb_d(\mathbb{A}^n)$ if $d\in\{9,10\}$, unless the algebra has Hilbert function $(1,5,3)$ or $(1,6,3)$. We first note that all algebras with $\mathfrak{n}^2=0 $ are smoothable, see e.g \cite[Proposition 4.15]{ccvv}. Therefore, here we consider algebras with $\mathfrak{n}^2\neq 0$ and the Hilbert function $(1,n,r)$ with $d=r+n+1$. By \cite[Proposition 4.2]{ccvv}, the locus of all local algebras with such a Hilbert function is irreducible.\\

Let $A=(A_1,\ldots ,A_n)$ be an $n$-tuple of commuting $d\times d$ matrices corresponding to the algebra $\cA$. Recall that the matrices $A_i$ represent multiplications with the variables $\alpha_i$ on the algebra $\cA$. Since $H=(1,n,r)$, we may choose a $\KK$-basis of $\cA$ of the form $\{q_1,\ldots ,q_r,l_1,\ldots ,l_n,1\}$ for some linear forms $l_1,\ldots ,l_n$ and quadratic forms $q_1,\ldots ,q_r$. In this basis the matrices $A_i$ are of the form
\begin{equation}\label{matrices:cube_zero}
A_i=\begin{bmatrix}
    0 & B_i & 0 \\
    0 & 0 & c_i \\
    0 & 0 & 0
\end{bmatrix},
\end{equation}
with blocks of respective sizes $r=d-n-1$, $n$ and 1, for some vectors $c_i\in \KK^n$ and some matrices $B_i\in \mathbb{M}_{r\times n}$ such that $c_1,\ldots ,c_n$ are linearly independent and the common cokernel of $B_1,\ldots ,B_n$ is trivial. The conditions on the common cokernels are consequences of the equalities
$$\dim_{\KK}\bigcap_{i=1}^n\ker A_i^T=\dim_{\KK}\cA/\mathfrak{n}=1$$
and
$$\dim_{\KK}\bigcap_{i,j}\ker (A_iA_j)^T=\dim_{\KK}\cA/\mathfrak{n}^2=n+1,$$
see Remark \ref{3.20}.

By Theorem \ref{Shafarevich} we know that algebras with Hilbert functions $(1,n,r)$ are smoothable for $r\le 2$, while they form generically reduced elementary components of $\Hilb_d(\mathbb{A}^n)$ if $3\le r\le \frac{(n-1)(n-2)}{6}+2$ and $(n,r)\ne (5,4)$. In the cases $d=9,10$, we therefore get elementary components parametrizing algebras with Hilbert functions $(1,5,3)$ and $(1,6,3)$. Using Theorem \ref{11points}, it follows that we have to consider only algebras with Hilbert functions $(1,4,4)$, $(1,4,5)$ and $(1,5,4)$. We show that algebras with these Hilbert functions are smoothable. Recall that the case $(1,5,4)$ was an exception in Shafarevich's result remarked in \ref{exception}.

\begin{proposition}\label{HF:144,154}
    Local algebras with Hilbert functions $(1,4,4)$ and $(1,5,4)$ are smoothable.
\end{proposition}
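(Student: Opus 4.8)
The plan is to leverage the fact, recalled just above, that by \cite[Proposition 4.2]{ccvv} the locus $L$ of local algebras with Hilbert function $(1,4,4)$ (respectively $(1,5,4)$) is irreducible. It therefore suffices to produce a single algebra $\cA_0 = R/I_0$ belonging to $L$ that is simultaneously smoothable and a smooth point of $\Hilb_d(\mathbb{A}^n)$: being a smooth point, $[\cA_0]$ lies on a unique irreducible component; since $\cA_0$ is smoothable, that component is $\Hilb_d^{sm}(\mathbb{A}^n)$; and since $\overline L$ is irreducible with $[\cA_0]\in\overline L$, the whole $\overline L$ lies inside that unique component, hence inside $\Hilb_d^{sm}(\mathbb{A}^n)$. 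This is exactly the mechanism used in Proposition~\ref{Sm14221}.

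To construct $\cA_0$ I would degenerate a disjoint union of a smaller smoothable algebra and two reduced points. Take $\cA' = R/\Ann(q_1,q_2)$ for two sufficiently general quadrics $q_1,q_2\in S_2$ whose first partials span $S_1$, so that $\cA'$ has Hilbert function $(1,n,2)$ and is smoothable by Theorem~\ref{Shafarevich}; note $\deg\cA' = d-2$ and its embedding dimension is already $n$. Choose points $p,p'\in\mathbb{A}^n$ off the support of $\cA'$, set $J = \Ann(q_1,q_2)\cap\mathfrak{m}_p\cap\mathfrak{m}_{p'}$, so that $\Spec(R/J) = \Spec\cA' \sqcup\{p\}\sqcup\{p'\}$ is smoothable of degree $d$, and put $I_0 = \inn_w(J)$ for a suitable positive weight vector $w$ together with a suitable position of $p,p'$. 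Since $\inn_w$ is a flat $\Gmult$-degeneration preserving the colength, $\cA_0 = R/I_0$ is a limit of $\Spec(R/J)$, hence smoothable. The data $w,p,p',q_1,q_2$ should be arranged so that the limit is supported at the origin with embedding dimension still $n$; since the embedding dimension cannot grow, the two colliding points can only enlarge the second graded piece, which forces the Hilbert function of $\cA_0$ to be $(1,4,4)$ (respectively $(1,5,4)$, working in $\mathbb{A}^5$ with $d=10$ and $q_1,q_2$ giving Hilbert function $(1,5,2)$).

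The remaining point is to verify that $[\cA_0]$ is a smooth point, i.e. that $\dim T_{[\cA_0]}\Hilb_d(\mathbb{A}^n) = nd$ (which is $36$ for $(1,4,4)$ and $50$ for $(1,5,4)$). By Lemma~\ref{dim_tangent_spaces} this reduces, via Lemma~\ref{tangent_space_commuting_marices}, to computing the tangent space to $\CommMat{d}{n}$ at the $n$-tuple of commuting matrices attached to $\cA_0$ in the block form \eqref{matrices:cube_zero}, and checking it has dimension $d^2+(n-1)d$. The main obstacle is making all these requirements compatible simultaneously: the Hilbert function of the initial ideal must come out exactly right (which constrains the admissible weights and the directions in which $p,p'$ may degenerate), while the resulting $\cA_0$ must still be a smooth point, so the degeneration data has to be chosen with care and the tangent-space computation — though a finite linear-algebra problem — is the technical heart. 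An alternative that sidesteps the smoothness computation is to exhibit directly a flat family over $\mathbb{A}^1$ whose general fibre is a union of $d$ distinct reduced points and whose special fibre is a general member of $L$; this again yields $\overline L\subseteq\Hilb_d^{sm}(\mathbb{A}^n)$.
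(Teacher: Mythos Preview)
Your high-level strategy coincides with the paper's: use irreducibility of the Hilbert-function locus and exhibit a single smoothable algebra that is also a smooth point of the Hilbert scheme. The paper, however, constructs the witness differently, working entirely on the commuting-matrix side. For $(1,4,4)$ it writes down an explicit $9\times 9$ matrix $A_1(\lambda)$ with nine distinct eigenvalues for $\lambda\ne 0$, takes $A_2(\lambda),A_3(\lambda),A_4(\lambda)$ as carefully chosen polynomials in $A_1(\lambda)$ so that each $A_i(\lambda)$ retains the block shape \eqref{matrices:cube_zero} as $\lambda\to 0$, and then verifies by a direct linear-algebra computation that $\dim T_{(A_i(0))}\CommMat{9}{4}=108=d^2+(n-1)d$. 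The $(1,5,4)$ case is done the same way with an explicit $10\times 10$ matrix. This makes both the Hilbert function of the limit and the smoothable nature of the family transparent from the block structure, so no argument about initial ideals is needed.

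There is one genuine gap in your sketch. The assertion that ``the two colliding points can only enlarge the second graded piece'' is not justified. Since $\Ann(q_1,q_2)$ is homogeneous and $J\subset\Ann(q_1,q_2)$, one does get $\inn_w(J)\subset\Ann(q_1,q_2)$, so the limit surjects onto $\cA'$; but the extra length $2$ can a priori land in higher degrees, yielding Hilbert function $(1,4,3,1)$, $(1,4,2,2)$ or $(1,4,2,1,1)$ instead of $(1,4,4)$. You later concede that the data ``has to be chosen with care'', which is correct, but the outcome should not be presented as forced by the embedding-dimension bound. Your proposed alternative at the end does not actually sidestep the smoothness check: a single flat family with smoothable general fibre only shows that its special fibre is smoothable, and to conclude $\overline L\subset\Hilb_d^{sm}(\mathbb{A}^n)$ you still need either smoothness at that point or a family whose special fibres sweep out a dense open subset of $L$.
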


\begin{proof}
    The subschemes $\HilbFunc{(1,n,r)}{n}$ are irreducible, so for $r=4$ and $n\in \{4,5\}$ it suffices to find a smoothable algebra in $\HilbFunc{(1,n,r)}{n}$ which is a smooth point on the Hilbert scheme $\Hilb_{1+n+r}(\mathbb{A}^n)$. We will prove the corresponding statement for quadruples of matrices, i.e. we will find quadruples of commuting matrices of the form \eqref{matrices:cube_zero} which are smooth points on the principal components of $\CommMat{9}{4}$, respectively $\CommMat{10}{4}$.
    
    We start with the case $n=4,r=4$. Let
    $$A_1(\lambda)=\begin{bmatrix}
        \lambda E & \mathbf{I} & 0\\
        0 & \lambda F & e_1\\
        0 & 0 & 0
    \end{bmatrix}\quad \mathrm{where}\quad E=\begin{bmatrix}
        0 & 0 & 0 & 1\\
        1 & 0 & 0 & 0\\
        0 & 1 & 0 & 0\\
        0 & 0 & 1 & 0
    \end{bmatrix}, \, F=\begin{bmatrix}
        0 & 0 & 0 & -1\\
        1 & 0 & 0 & 0\\
        0 & 1 & 0 & 0\\
        0 & 0 & 1 & 0
    \end{bmatrix},$$
    $e_1$ is the first standard basis vector of $\KK^4$ and $\mathbf{I}$ denotes the identity matrix. We define 
    \begin{align*}
        &A_2(\lambda)=\frac{1}{\lambda}A_1(\lambda)^2-\frac{1}{3\lambda^5}A_1(\lambda)^6,\\
        &A_3(\lambda)=\frac{1}{\lambda^2}A_1(\lambda)^3-\frac{1}{\lambda^6}A_1(\lambda)^7,\\
        &A_4(\lambda)=\frac{1}{\lambda^3}A_1(\lambda)^4-\frac{3}{\lambda^7}A_1(\lambda)^8.
    \end{align*}
    It is clear that the matrices $A_i(\lambda)$ commute for $\lambda\ne 0$ and that $A_1(\lambda)$ has 9 distinct eigenvalues. Therefore the quadruple $(A_i(\lambda))_{i=1}^4$ belongs to the principal component of $\CommMat{9}{4}$ for each $\lambda\ne 0$. Moreover, the constants in the expressions for $A_i(\lambda)$ are chosen in such a way that $A_i(\lambda)$ is of the form $\begin{bmatrix}
        \lambda E_i & B_i & 0\\
        0 & \lambda F_i & c_i\\
        0 & 0 & 0
    \end{bmatrix}$, so it makes sense to define $A_i(0)$, and $(A_i(0))_{i=1}^4$ is also a commuting quadruple of matrices and it belongs to the principal component. It is also easy to see that the quadruple $(A_i(0))_{i=1}^4$ corresponds to an algebra from $\HilbFunc{(1,4,4)}{4}$. Moreover, a straightforward computation using Lemma \ref{tangent_space_commuting_marices} shows that the dimension of the tangent space to $\CommMat{9}{4}$ at $(A_i(0))_{i=1}^4$ is equal to 108 which is the dimension of the principal component. So the point $(A_i(0))_{i=1}^4$ is smooth on the principal component, as required.

    The case $n=5,r=4$ is proved in exactly the same way, starting from
    $$A_1(\lambda)=\begin{bmatrix}
        \lambda E & B & 0\\
        0 & \lambda F & e_1\\
        0 & 0 & 0
    \end{bmatrix}\, \mathrm{where}\, E=\begin{bmatrix}
        0 & 0 & 0 & -1\\
        1 & 0 & 0 & 0\\
        0 & 1 & 0 & 0\\
        0 & 0 & 1 & 0
    \end{bmatrix}, \, F=\begin{bmatrix}
        0 & 0 & 0 & 0 & 1\\
        1 & 0 & 0 & 0 & 0\\
        0 & 1 & 0 & 0 & 0\\
        0 & 0 & 1 & 0 & 0\\
        0 & 0 & 0 & 1 & 0
    \end{bmatrix}, \, B=\begin{bmatrix}
        0 & 1 & 0 & 0 & 0\\
        0 & 0 & 1 & 0 & 0\\
        0 & 0 & 0 & 1 & 0\\
        1 & 0 & 0 & 0 & 1
    \end{bmatrix}$$
    and setting
    \begin{align*}
        &A_2(\lambda)=\frac{1}{\lambda}A_1(\lambda)^2+\frac{1}{3}A_1(\lambda)-\frac{1}{3\lambda^5}A_1(\lambda)^6,\\
        &A_3(\lambda)=\frac{1}{\lambda^2}A_1(\lambda)^3,\\
        &A_4(\lambda)=\frac{1}{\lambda^3}A_1(\lambda)^4-\frac{1}{3\lambda^2}A_1(\lambda)^3+\frac{1}{3\lambda^7}A_1(\lambda)^8,\\
        &A_5(\lambda)=\frac{1}{\lambda^4}A_1(\lambda)^5-\frac{2}{3\lambda^3}A_1(\lambda)^4+\frac{2}{3\lambda^8}A_1(\lambda)^9.
    \end{align*}
\end{proof}

In the case of Hilbert function $(1,4,5)$ we were unable to find a point in $\HilbFunc{H}{4}$ which is smooth in $\Hilb_{10}(\mathbb{A}^4)$, and we prove smoothability in a different way.

\begin{proposition}\label{HF:145}
    Local algebras with Hilbert function $(1,4,5)$ are smoothable.
\end{proposition}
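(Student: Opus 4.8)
The plan is to prove smoothability by \emph{liaison}: link $\mathcal{A}=R/I$, where $R=\KK[\alpha_1,\dots,\alpha_4]$, to an algebra of degree $6$ (which is automatically smoothable) and then transport a smoothing back along the link. Since $\mathfrak{n}^3=0$, the algebra $\mathcal{A}$ is isomorphic to its associated graded algebra, hence defined by a homogeneous ideal with $I_1=0$, $\dim_{\KK}I_2=5$ and $I\supseteq\mathfrak{m}^3$; a general such $\mathcal{A}$ corresponds to a general $5$-dimensional space of quadrics $I_2\subseteq\Sym^2\KK^4$. As $\HilbFunc{(1,4,5)}{4}$ is irreducible (\cite[Proposition 4.2]{ccvv}) and $\Hilb_{10}^{sm}(\AA^4)$ is closed, it suffices to smooth a general $\mathcal{A}$.

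For $I_2$ general the linear system $|I_2|$ is base-point free on $\PP^3$, so four general quadrics $q_1,\dots,q_4\in I_2$ form a regular sequence: $V(q_1,\dots,q_4)$ is the preimage under the morphism $\phi\colon\PP^3\to\PP^4$ defined by $I_2$ of the intersection of four general hyperplanes, i.e.\ of a general point of $\PP^4$, which avoids the at most $3$-dimensional image of $\phi$. Thus $\mathfrak{c}=(q_1,\dots,q_4)\subseteq I$ is an Artinian complete intersection of degree $16$ and socle degree $4$. Consider the linked ideal $I'=\mathfrak{c}:I$. Since $R/\mathfrak{c}$ is Gorenstein, $I'/\mathfrak{c}\cong\Hom_{R/\mathfrak{c}}(R/I,R/\mathfrak{c})$, whose degree $j$ piece has dimension $\binom{4}{j}-H_{R/I}(4-j)$, so $R/I'$ has Hilbert function $(1,4,1)$; in particular $\dim_{\KK}R/I'=6$ and $R/I'$ is smoothable by Proposition~\ref{Hf:1n1...1} (or by Theorem~\ref{8points}). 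Moreover $\mathfrak{c}:I'=I$, since $R/I$ is Artinian, hence unmixed.

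I would then run the linkage technique of Remark~\ref{linkage}. Choose a smoothing $(\mathcal{Z}'_t)_{t\in(T,0)}$ of $\Spec(R/I')$ over a smooth pointed curve, with $\mathcal{Z}'_t$ six distinct points for $t\neq0$; using that $\AA^4\times T$ is affine, lift $q_1,\dots,q_4$ to sections of $\mathcal{I}_{\mathcal{Z}'}$ of degree $\leq2$ to obtain a flat family of complete intersections $\mathfrak{c}_t\subseteq\mathcal{I}_{\mathcal{Z}'_t}$ with $\mathfrak{c}_0=\mathfrak{c}$ (flatness: by B\'ezout the degree of $R/\mathfrak{c}_t$ is $\leq16$, with equality at $t=0$ and for general $t$, hence constant). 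Set $\mathcal{I}_{\mathcal{Z}_t}:=\mathfrak{c}_t:\mathcal{I}_{\mathcal{Z}'_t}$. The length formula in liaison gives $\dim_{\KK}\mathcal{O}_{\mathcal{Z}_t}=16-6=10$ for all $t$, so $(\mathcal{Z}_t)_t$ is flat over $T$, with special fiber $\mathcal{Z}_0=V(\mathfrak{c}:I')=\Spec\mathcal{A}$ by double linkage. If the lifts are chosen generically, then for general $t$ the scheme $V(\mathfrak{c}_t)$ is $16$ reduced points containing the six points $\mathcal{Z}'_t$, so $\mathcal{Z}_t$ — residual to $\mathcal{Z}'_t$ inside $V(\mathfrak{c}_t)$ — is $10$ reduced points. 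Hence $\Spec\mathcal{A}=\mathcal{Z}_0\in\Hilb_{10}^{sm}(\AA^4)$, and by irreducibility every algebra with Hilbert function $(1,4,5)$ is smoothable.

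The main obstacle is choosing the lifts so that $V(\mathfrak{c}_t)$ is \emph{radical} for general $t$ while still degenerating to $\mathfrak{c}$ at $t=0$: the freedom in the lifting allows $\mathfrak{c}_t$ (for fixed $t\neq0$) to be an essentially arbitrary complete intersection of quadrics through the six general points $\mathcal{Z}'_t$, and a Bertini-type argument — the linear system of such quadrics is base-point free away from the six points, and a general member has a prescribable differential at each of them — should give radicality; the attendant flatness claims for $\mathfrak{c}_t$ and for $\mathcal{Z}_t$ rest on the length bookkeeping above. One should also check that a general $\mathcal{A}$ meets all the genericity hypotheses used — base-point-freeness of $|I_2|$, $\mathfrak{c}$ a regular sequence, $\mathcal{Z}'_t$ in general position — each of which is a non-empty open condition.
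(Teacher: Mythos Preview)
Your linkage approach is not the paper's proof of Proposition~\ref{HF:145}, but it is precisely the alternative the paper outlines in Remark~\ref{linkage}. The paper's own argument is entirely matrix-theoretic: it works inside $\CommMat{10}{4}$, constructs an explicit quadruple $(A_1,\dots,A_4)$ of block-upper-triangular commuting matrices (blocks of sizes $5,4,1$, with $A_1$ having ten distinct eigenvalues), verifies it is a smooth point on a component $\mathcal{C}$ of that block-triangular locus, and then shows that the projection $\pi$ which kills the diagonal blocks maps $\mathcal{C}$ dominantly onto the locus $\mathcal{V}$ corresponding to Hilbert function $(1,4,5)$ by checking surjectivity of the differential at that point. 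Since a general point of $\mathcal{C}$ has a matrix with ten distinct eigenvalues, a one-parameter rescaling of the diagonal blocks shows $\mathcal{V}$ lies in the principal component. No complete intersections and no liaison appear.

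Your argument is correct up to the gap you honestly flag: producing a flat family of complete intersections $\mathfrak{c}_t\supseteq\mathcal{I}_{\mathcal{Z}'_t}$, specialising to $\mathfrak{c}$, with $V(\mathfrak{c}_t)$ generically reduced. The Bertini sketch is plausible but delicate, particularly transversality at the six base points and the simultaneous requirement that $\mathfrak{c}_t\to\mathfrak{c}$ as $t\to 0$. The clean fix is exactly what Remark~\ref{linkage} does: invoke the isomorphism $\Hilb_{\ast,\mathrm{lci}}^{(10,16)}(\AA^4)\cong \Hilb_{\ast,\mathrm{lci}}^{(6,16)}(\AA^4)$ of \cite[Proposition~5.3]{Jelisiejew-Ramkumar-Sammartano}; combined with irreducibility of $\Hilb_6(\AA^4)$ and \cite[Proposition~5.2]{Jelisiejew-Ramkumar-Sammartano}, this places the pair $(\Spec R/I\subseteq\Spec R/\mathfrak{c})$ in the closure of the locus of tuples of reduced points without building the family by hand, and a second application of \cite[Proposition~5.2]{Jelisiejew-Ramkumar-Sammartano} then gives smoothability of $\Spec R/I$. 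One minor slip: in ``whose degree $j$ piece has dimension $\binom{4}{j}-H_{R/I}(4-j)$'', that formula is the Hilbert function of $R/I'$, not of $I'/\mathfrak{c}\cong\Hom_{R/\mathfrak{c}}(R/I,R/\mathfrak{c})$ (the latter is $H_{R/I}(4-j)$); the conclusion $(1,4,1)$ is unaffected.
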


\begin{proof}
    We will prove the matrix version of the proposition, i.e. we will show that all quadruples of commuting matrices $(A_1,\ldots ,A_4)$ of the form \eqref{matrices:cube_zero} belong to the principal component of $\CommMat{10}{4}$. To do this, let $\mathcal{V}$ be the set of all quadruples $(A_1,A_2,A_3,A_4)$ of commuting matrices of the form \eqref{matrices:cube_zero}
    with blocks of sizes $5,4,1$, where $c_1,c_2,c_3,c_4$ are linearly independent and $B_1,B_2,B_3,B_4$ have trivial common cokernel. We consider $\mathcal{V}$ as an open subscheme of the scheme of commuting matrices of the shape $\begin{bmatrix}
        0 & \ast & 0\\
        0 & 0 & \ast\\
        0 & 0 & 0
    \end{bmatrix}$, so the tangent space to $\mathcal{V}$ is given by Lemma \ref{tangent_space_commuting_marices} and the linear equations describing the shape of the matrices. As in the proof of \cite[Lemma 6.7]{JS} we see that the quadruples $(c_1,\ldots ,c_4)$, where $(A_1,\ldots ,A_4)\in \mathcal{V}$, form an open subset of $(\KK^4)^4$, and that given a fixed quadruple $(c_1,\ldots ,c_4)$ the matrices $B_1,\ldots ,B_4$ are solutions of a system of linearly independent $5\cdot{4\choose 2}$ linear equations, so $\dim \mathcal{V}=4\cdot 4+4\cdot 4\cdot 5-5\cdot {4\choose 2}=66$. 

    Let $\mathcal{Z}$ be the scheme of quadruples of commuting matrices of the form $$A_i=\begin{bmatrix}
        E_i & B_i & 0\\
        0 & F_i & c_i\\
        0 & 0 & 0
    \end{bmatrix}$$ and let $\mathcal{U}$ be its open subscheme where $c_1,c_2,c_3,c_4$ are linearly independent, the common cokernel of $B_1,B_2,B_3,B_4$ is trivial and $A_1$ has 10 distinct eigenvelues. Again, the tangent space to $\mathcal{U}$ is given by Lemma \ref{tangent_space_commuting_marices} and the shape of the matrices. A matrix that commutes with a matrix having all eigenvalues distinct is a polynomial in that matrix, so the dimension of the principal component of the scheme of quadruples of commuting block upper triangular matrices with blocks of sizes $5,4,1$ and lower right corner equal to 0 is $10^2-4\cdot 5-10+3\cdot 9=97$. As $\mathcal{U}$ is an open subset of the intersection of this scheme with the space of quadruples of matrices with upper right block corner equal to 0, the dimension of each of its irreducible components is at least $97-4\cdot 5=77$.

    Let 
    $$A_1=\begin{bmatrix}
        E_1 & B_1 & 0\\
        0 & F_1 & e_1\\
        0 & 0 & 0
    \end{bmatrix},\ \text{with}\ E_1=\begin{bmatrix}
        0 & 0 & 0 & 0 & 1\\
        1 & 0 & 0 & 0 & 0\\
        0 & 1 & 0 & 0 & 0\\
        0 & 0 & 1 & 0 & 0\\
        0 & 0 & 0 & 1 & 0
    \end{bmatrix}, B_1=\begin{bmatrix}
        1 & 0 & 0 & 0\\
        0 & 1 & 0 & 0\\
        0 & 0 & 1 & 0\\
        0 & 0 & 0 & 1\\
        0 & 0 & 0 & 0
    \end{bmatrix}, F_1=\begin{bmatrix}
        0 & 0 & 0 & -1\\
        1 & 0 & 0 & 0\\
        0 & 1 & 0 & 0\\
        0 & 0 & 1 & 0
    \end{bmatrix}$$
    and $e_1$ is the first standard basis vector. Furthermore, let
    \begin{align*}
        &A_2=A_1^2-\frac{3}{32}A_1^5-\frac{1}{4}A_1^7-\frac{1}{8}A_1^8-\frac{3}{32}A_1^9,\\
        &A_3=A_1^3-\frac{3}{8}A_1^5-\frac{1}{2}A_1^8-\frac{3}{8}A_1^9,\\
        &A_4=A_1^4-\frac{3}{4}A_1^5-\frac{3}{4}A_1^9.
    \end{align*}
    The constants are chosen in such a way that $(A_1,A_2,A_3,A_4)\in \mathcal{U}$, i.e. that the upper right block corners of the matrices $A_i$ are zero. Let $\mathcal{C}$ be the component of $\overline{\mathcal{U}}$ containing $(A_1,A_2,A_3,A_4)$. Then $\mathcal{C}$ is also a component of $\mathcal{Z}$. We can compute that $$\dim T_{(A_1,A_2,A_3,A_4)}\mathcal{C}=\dim T_{(A_1,A_2,A_3,A_4)}\mathcal{Z}=77,$$ so $\dim \mathcal{C}=77$ and $(A_1,A_2,A_3,A_4)$ is a smooth point on $\mathcal{C}$.

    Now let $\pi\colon \mathcal{C}\to \overline{\mathcal{V}}$ be the projection that sends the diagonal blocks of matrices to zero, and let $(A_1',A_2',A_3',A_4')=\pi(A_1,A_2,A_3,A_4)$. A computation shows that $(A_1',A_2',A_3',A_4')$ is a smooth point on $\overline{\mathcal{V}}$ and that the differential $$d\pi\colon T_{(A_1,A_2,A_3,A_4)}\mathcal{C}\to T_{(A_1',A_2',A_3',A_4')}\mathcal{V},$$
    which is also the projection that sends the diagonal blocks to zero, is surjective. The map $\pi$ is therefore dominant.

    The dominance of the map $\pi$ implies that for a general quadruple $(Y_1,Y_2,Y_3,Y_4)\in \mathcal{V}$ with $Y_i=\begin{bmatrix}
        0 & B_i & 0\\
        0 & 0 & c_i\\
        0 & 0 & 0
    \end{bmatrix}$ there exists $(X_1,X_2,X_3,X_4)\in \mathcal{C}$ with 
$$X_i=\begin{bmatrix}
        E_i & B_i & 0\\
        0 & F_i & c_i\\
        0 & 0 & 0
    \end{bmatrix}$$    
    such that $X_1$ has 10 distinct eigenvalues. However, then it is clear that $$\left(\begin{bmatrix}
        \lambda E_i & B_i & 0\\
        0 &\lambda F_i & c_i\\
        0 & 0 & 0
    \end{bmatrix}\right)_{i=1}^4\in \mathcal{C}$$
    for each $\lambda \ne 0$, and that for $\lambda\ne 0$ the first matrix in the quadruple has 10 distinct eigenvalues. So the quadruple belongs to the principal component of $\CommMat{10}{4}$ for all $\lambda\in\KK$ and in particular this holds for $(Y_1,Y_2,Y_3,Y_4)$. Therefore, $\mathcal{V}$ is a subset of the principal component, as required.
\end{proof}

\begin{remark}\label{linkage}
    Joachim Jelisiejew informed us that smoothability of algebras with Hilbert functions $(1,4,4)$ and $(1,4,5)$ could be proved also using linkage. We provide here an outline of the argument.

    The definition of linkage is due to Peskine and Szpiro \cite{liaison}. Two ideals $I,J$ of a regular ring $R$ are {\em linked} if there is a regular sequence $r_1,\ldots ,r_k\in R$ such that $J=((r_1,\ldots ,r_k):I)$ and $I=((r_1,\ldots ,r_k):J)$. To show smoothability of local algebras with Hilbert functions $(1,4,r)$ where $r\in \{4,5\}$ the main idea is to prove that a general ideal $I$ in $\HilbFunc{(1,4,r)}{4}$ is linked to an ideal $J$ in $\Hilb_{11-r}(\mathbb{A}^4)$, and then the theorem by Huneke and Ulrich \cite[Theorem 3.10]{Huneke-Ulrich} implies that smoothability of $R/I$ follows from smoothability of $R/J$.

    More precisely, observe first that if $R/I\in \HilbFunc{(1,4,r)}{4}$, then $I$ is generated by $10-r$ quadrics and all cubics. Since $\HilbFunc{(1,4,r)}{4}$ is irreducible, it suffices to prove smoothability of algebras belonging to some open subset. We may therefore consider such algebras $R/I\in \HilbFunc{(1,4,r)}{4}$ that the ideal $I$ is generated only by quadrics $q_1,\ldots ,q_{10-r}$ and that $q_1,q_2,q_3,q_4$ is a regular sequence. Then $K=(q_1,q_2,q_3,q_4)\lhd R$ is a complete intersection, so the Hilbert function of $R/K$ is $(1,4,6,4,1)$. Let $Z=\Spec(R/I)$ and $Z'=\Spec(R/K)$. Then $Z\subseteq Z'$ is a pair of subschemes of $\mathbb{A}^4$ of degrees $5+r$ and 16, with $Z'$ complete intersection, so it belongs to the sublocus $\Hilb_{\ast,\mathrm{lci}}^{(5+r,16)}(\mathbb{A}^4)$ of the nested Hilbert scheme $\Hilb^{(5+r,16)}(\mathbb{A}^4)$, defined in \cite[Definitions 5.1]{Jelisiejew-Ramkumar-Sammartano}. Now we use Propositions 5.2 and 5.3 of \cite{Jelisiejew-Ramkumar-Sammartano}. By \cite[Proposition 5.3]{Jelisiejew-Ramkumar-Sammartano} we have an isomorphism $\Hilb_{\ast,\mathrm{lci}}^{(5+r,16)}(\mathbb{A}^4)\to \Hilb_{\ast,\mathrm{lci}}^{(11-r,16)}(\mathbb{A}^4)$ that sends $Z\subseteq Z'$ to $Z''\subseteq Z'$ where $Z''$ is the closed subscheme of $\mathbb{A}^4$ defined by $(K:I)$, i.e. by the link of $I$. Since $r\ge 4$, the Hilbert scheme $\Hilb_{11-r}(\mathbb{A}^4)$ is irreducible, hence by \cite[Proposition 5.2]{Jelisiejew-Ramkumar-Sammartano} the preimage of the projection $\Hilb_{\ast,\mathrm{lci}}^{(11-r,16)}(\mathbb{A}^4)\to \Hilb_{11-r}(\mathbb{A}^4)$ is the closure of the locus of tuples of points (see \cite[Definitions 5.1]{Jelisiejew-Ramkumar-Sammartano}). In particular, the pair $Z''\subseteq Z'$ belongs to the closure of the locus of tuples of points, and by the above isomorphism $\Hilb_{\ast,\mathrm{lci}}^{(5+r,16)}(\mathbb{A}^4)\to \Hilb_{\ast,\mathrm{lci}}^{(11-r,16)}(\mathbb{A}^4)$ so does $Z\subseteq Z'$. Now we use \cite[Proposition 5.2]{Jelisiejew-Ramkumar-Sammartano} again to see that $Z$ is smoothable.
\end{remark}

\subsection{Local algebras $(A,\mathfrak{n})$ with $\mathfrak{n}^4=0$}

In this subsection we show that each local algebra $(\mathcal{A},\mathfrak{n})$ with $\mathfrak{n}^4=0$ and $\mathfrak{n}^3\ne 0$ belongs to a non-elementary component of $\Hilb_d(\mathbb{A}^n)$ if $d\in \{9,10\}$. Moreover, we show that such algebra is smoothable if its socle dimension is at most 2. By Theorem \ref{11points} we may assume $n\ge 4$, so the possible Hilbert functions for the algebras under consideration are $(1,n,r,1)$ for $n\ge 4$ and $r\le 8-n$, $(1,n,2,2)$ for $n\in \{4,5\}$ and $(1,4,3,2)$. The algebras with Hilbert functions $(1,n,2,2)$ are smoothable by Proposition \ref{Hf:1n22}, so we consider the other two cases.

\subsubsection{Local algebras with Hilbert function $(1,n,r,1)$}

Let $d,n,r$ be positive integers with $d=n+r+2$. We will consider local algebras $(\cA,\mathfrak{n})$ with Hilbert function $(1,n,r,1)$, supported at zero. Let $(A_1,\ldots ,A_n)$ be an $n$-tuple of nilpotent commuting $d\times d$ matrices corresponding to the algebra $\cA$. Note that the matrices $A_i$ represent multiplications by the variables $\alpha_i$ on the algebra $\cA$. Since $H=(1,n,r,1)$, it is clear that we can find a $\KK$-basis of $\cA$ of the form $\{c,q_1,\ldots ,q_r,l_1,\ldots ,l_n,1\}$ where $c$ is a cubic form, $q_1,\ldots ,q_r$ are quadratic forms and $l_1,\ldots ,l_n$ are linear forms. In this basis the matrices $A_i$ are of the block form
\begin{equation}\label{blocks_m^4}
A_i=\begin{bmatrix}
    0 & b_i^T & c_i^T & 0\\
    0 & 0 & D_i & 0\\
    0 & 0 & 0 & f_i\\
    0 & 0 & 0 & 0
\end{bmatrix}
\end{equation}
with blocks of respective sizes $1,r,n,1$, where $b_i\in \KK^r$,  $c_i, f_i\in \KK^n$ and $D_i\in \mathbb{M}_{r\times n}$, such that $f_1,\ldots ,f_n$ are linearly independent, the common cokernel of $D_1,\ldots ,D_n$ is trivial and not all $b_i$ are zero. As in Section \ref{Sectionn3=0}, the conditions on common cokernels follow from Remark \ref{3.20}.

Assume now that $r\le n$. For each $s\in\{1,\ldots ,r\}$ let $\X{n}{r}{s}$ be the set of all $n$-tuples of commuting matrices $(A_1,\ldots ,A_n)$ that can be in some basis written in the form \eqref{blocks_m^4} such that $f_1,\ldots ,f_n$ are linearly independent, the common cokernel of $D_1,\ldots ,D_n$ is trivial and $\dim_{\KK} \mathrm{Span}\{b_1,\ldots ,b_n\}=s$.

\begin{lemma}\label{irreducibility of X}
    The locally closed locus $\X{n}{r}{s}$ is irreducible if $1\le s\le r\le n$.
\end{lemma}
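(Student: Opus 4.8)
The plan is to exhibit $\X{n}{r}{s}$ as the image of an irreducible variety under a polynomial map, using the available group actions to normalise the block form \eqref{blocks_m^4} until the remaining parameters fill out an open subset of an affine space. First I would pass to the locally closed subset $\mathcal{W}\subseteq\mathbb{M}_d^n$ of tuples that literally have the shape \eqref{blocks_m^4}, commute, and satisfy the three side conditions (the $f_i$ linearly independent, the common cokernel of the $D_i$ trivial, and $\dim_{\KK}\mathrm{Span}\{b_1,\dots,b_n\}=s$). Since a tuple lies in $\X{n}{r}{s}$ exactly when it is $\GL_d$-conjugate to some tuple of $\mathcal{W}$, the locus $\X{n}{r}{s}$ is the image of $\GL_d\times\mathcal{W}$ under conjugation, so it suffices to prove that $\mathcal{W}$ is irreducible. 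Multiplying out \eqref{blocks_m^4}, one checks that $A_iA_j=A_jA_i$ for all $i,j$ is equivalent to the three families of bilinear relations
\begin{equation*}
  b_i^{T}D_j=b_j^{T}D_i,\qquad D_if_j=D_jf_i,\qquad c_i^{T}f_j=c_j^{T}f_i\qquad(1\le i,j\le n).
\end{equation*}

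Next I would normalise $\mathcal{W}$ using three group actions that all preserve it: the reparametrisation $G_\alpha=\GL_n$ of the index set, sending $(A_i)$ to $(\sum_j g_{ij}A_j)$; and the conjugations $G_\beta$, $G_\gamma$ by the block-diagonal matrices $\diag(1,h,I_n,1)$ and $\diag(1,I_r,g,1)$, which act on the data by $b_i\mapsto h^{-T}b_i$, $D_i\mapsto hD_i$ (respectively $D_i\mapsto D_ig^{-1}$, $f_i\mapsto gf_i$) and fix the other blocks. Because the $r\times n$ matrix $B=[\,b_1\,|\,\cdots\,|\,b_n\,]$ has rank $s$, a suitable element of $G_\alpha\times G_\beta$ (column and row operations) brings it to the form $b_i=e_i$ for $i\le s$ and $b_i=0$ for $i>s$; and since $G_\gamma$ fixes all the $b_i$, a further element of $G_\gamma$ then achieves $f_i=e_i$ for all $i$. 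Hence $\mathcal{W}$ is the image, under the action, of $(G_\alpha\times G_\beta\times G_\gamma)\times\mathcal{W}^{\mathrm{norm}}$, where $\mathcal{W}^{\mathrm{norm}}\subseteq\mathcal{W}$ is the sublocus with $b_i=e_i$ $(i\le s)$, $b_i=0$ $(i>s)$ and $f_i=e_i$ for all $i$, so it is enough to prove $\mathcal{W}^{\mathrm{norm}}$ is irreducible.

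On $\mathcal{W}^{\mathrm{norm}}$ the relations collapse. The identity $c_i^{T}f_j=c_j^{T}f_i$ says precisely that $[\,c_1\,|\,\cdots\,|\,c_n\,]$ is symmetric, so the $c_i$ contribute a free $\Sym^2\KK^n$ factor. Writing $d_{jk}\in\KK^r$ for the $k$-th column of $D_j$, the identity $D_if_j=D_jf_i$ becomes $d_{jk}=d_{kj}$, while $b_i^{T}D_j=b_j^{T}D_i$ becomes the condition that the scalars $b_i^{T}d_{jk}$ be totally symmetric in $i,j,k$; with the normalised $b_i$ this forces $(d_{jk})_i=0$ whenever $i\le s$ and $\max(j,k)>s$, makes $\big((d_{jk})_i\big)_{i,j,k\le s}$ an arbitrary element of $\Sym^3\KK^s$, and leaves the coordinates $(d_{jk})_i$ with $i>s$ completely unconstrained. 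Thus, after dropping the open condition $\mathrm{Span}\{d_{jk}:j,k\}=\KK^r$, the locus $\mathcal{W}^{\mathrm{norm}}$ is literally an affine space; and that open condition carves out a nonempty subset, since choosing the symmetric tensor and the free coordinates suitably one makes the $d_{jk}$ span $\KK^r$ (possible because $\binom{n+1}{2}\ge r$). Hence $\mathcal{W}^{\mathrm{norm}}$ is a nonempty open subset of an affine space, in particular irreducible, and unwinding the reductions shows $\X{n}{r}{s}$ is irreducible.

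The steps needing care are the block computation that turns $A_iA_j=A_jA_i$ into the three bilinear families, and the bookkeeping in the last paragraph confirming that the surviving $d_{jk}$-coordinates are genuinely free, so that $\mathcal{W}^{\mathrm{norm}}$ is an honest affine space and not merely generically so; one must also verify at each normalisation step that the group orbit covers the \emph{entire} preceding locus, so that one is taking images of irreducible varieties. I expect this last point, together with the unwinding of the symmetry condition on the $d_{jk}$, to be the main (though essentially routine) obstacle.
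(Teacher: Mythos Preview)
Your argument is correct and follows essentially the same strategy as the paper: use the $\GL_d\times\GL_n$ action (your $G_\alpha$, $G_\beta$, $G_\gamma$ are exactly the relevant pieces) to normalise the $b_i$ and $f_i$, and then observe that on the normalised slice the commutativity relations become linear, so one is left with an open subset of an affine space. The paper states this in one line (``commutativity in this locus is described by linear equations, so the locus is an affine space''), whereas you unwind the three bilinear families and the resulting symmetry conditions explicitly and also address the nonemptiness of the open cokernel condition; but the underlying idea is identical.
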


\begin{proof}
    Note that $\GL_d\times \GL_n$ acts on $\X{n}{r}{s}$ by conjugation and linear change of matrices. It is easy to see that $\X{n}{r}{s}$ is the $\GL_d\times\GL_n$-orbit of the locus of all $n$-tuples of block matrices of the form \eqref{blocks_m^4} satisfying $f_i=e_i$ for $i=1,\ldots ,n$ (where $e_i$ denotes the vector with 1 on the $i$-th component and zeros elsewhere), $b_i=e_{r-i+1}$ for $i=1,\ldots ,s$ and $b_i=0$ for $i>s$. The commutativity in this locus is described by linear equations, so the locus is an affine space, hence irreducible, and the irreducibility of $\X{n}{r}{s}$ also follows.
\end{proof}

\begin{example}\label{ex: minimal socle dimension in X}
    Let $1\le s\le r\le n$. For $i=1,\ldots ,n$, let $A_i$ be a matrix of the form \eqref{blocks_m^4} with $c_i=0$ and $f_i=e_i$ for $i=1,\ldots ,n$, $b_i=e_{r-i+1}$ for $i=1,\ldots ,s$, $b_i=0$ for $i>s$,
    $$D_i=\sum_{j=1}^{s+1-i}e_{r-s+i+j-1}e_j^T+\sum_{j=1}^{r-s}e_je_{n-r+s+1-i+j}^T\quad \mathrm{for}\quad i\le s,$$
    $$D_i=\sum_{j=1}^{r-s}e_je_{n-r+s+1-i+j}^T\quad \mathrm{for}\quad s+1\le i\le n-r+s,$$
    $$D_i=\sum_{j=1}^{n+1-i}e_{i-n+r-s+j-1}e_j^T\quad \mathrm{for}\quad i>n-r+s.$$
    A straightforward calculation shows that the matrices $A_i$ commute, $(A_1,\ldots ,A_n)\in \X{n}{r}{s}$ and that the common kernel of $D_1,\ldots ,D_n$ is trivial. The socle dimension of the algebra is therefore equal to $\dim_{\KK}\bigcap_{i=1}^n\ker A_i=1+\dim_{\KK}\bigcap_{i=1}^n\ker b_i^T=r-s+1$.
\end{example}

\begin{lemma}\label{socle dimension in X}
    Let $1\le s\le r\le n$. The socle dimension of a general algebra corresponding to an $n$-tuple from $\X{n}{r}{s}$ is equal to $r-s+1$.
\end{lemma}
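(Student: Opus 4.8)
The plan is to combine a uniform lower bound on the socle dimension that holds for \emph{every} tuple in $\X{n}{r}{s}$ with the explicit tuple of Example~\ref{ex: minimal socle dimension in X} realizing equality, and then to upgrade this to the generic tuple using irreducibility together with semicontinuity.

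First I would record, via Remark~\ref{3.20}, that for an $n$-tuple $(A_1,\ldots,A_n)\in\X{n}{r}{s}$ corresponding to an algebra $\cA$, the socle dimension equals $\dim_{\KK}\bigcap_{i=1}^n\ker A_i$. Writing each $A_i$ in the block form \eqref{blocks_m^4} with blocks of sizes $1,r,n,1$, a vector $v=(v_0,v_1,v_2,v_3)$ lies in $\bigcap_i\ker A_i$ exactly when $f_i v_3=0$, $D_i v_2=0$ and $b_i^T v_1+c_i^T v_2=0$ for all $i$. Since the $f_i$ are linearly independent, $v_3=0$; setting $K=\bigcap_i\ker D_i$ and introducing the linear map $\Phi\colon\KK^r\oplus K\to\KK^n$, $\Phi(v_1,v_2)=(b_i^T v_1+c_i^T v_2)_{i=1}^n$, we obtain $\bigcap_i\ker A_i\cong \KK\oplus\ker\Phi$, so $\tau(\cA)=1+\dim_{\KK}\ker\Phi$.

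Next I would bound $\dim_{\KK}\ker\Phi$ from below. The image of $\Phi$ is contained in the sum of $\{(b_i^T v_1)_i : v_1\in\KK^r\}$, which has dimension $\dim_{\KK}\mathrm{Span}\{b_1,\ldots,b_n\}=s$, and $\{(c_i^T v_2)_i : v_2\in K\}$, which has dimension at most $\dim_{\KK}K$; hence $\rank\Phi\le s+\dim_{\KK}K$ and $\dim_{\KK}\ker\Phi=(r+\dim_{\KK}K)-\rank\Phi\ge r-s$. Therefore $\tau(\cA)\ge r-s+1$ for every tuple in $\X{n}{r}{s}$, while Example~\ref{ex: minimal socle dimension in X} provides a tuple in $\X{n}{r}{s}$ whose algebra has socle dimension exactly $r-s+1$.

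Finally, the function $(A_1,\ldots,A_n)\mapsto\dim_{\KK}\bigcap_i\ker A_i$ equals $d$ minus the rank of the stacked matrix obtained by piling up $A_1,\ldots,A_n$, hence is upper semicontinuous on $\X{n}{r}{s}$; so the locus where $\tau\le r-s+1$ is open, and it is nonempty by the Example. Since $\X{n}{r}{s}$ is irreducible by Lemma~\ref{irreducibility of X}, this locus is dense, and on it the universal lower bound forces $\tau=r-s+1$, proving that a general algebra corresponding to a tuple of $\X{n}{r}{s}$ has socle dimension $r-s+1$. The only mildly delicate point is the block computation identifying the common kernel with $\KK\oplus\ker\Phi$ and the resulting rank estimate for $\Phi$; everything else is formal.
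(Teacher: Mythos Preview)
Your proof is correct and follows essentially the same approach as the paper: establish the universal lower bound $\tau\ge r-s+1$ via the block form and Remark~\ref{3.20}, invoke Example~\ref{ex: minimal socle dimension in X} for equality, and conclude by irreducibility (Lemma~\ref{irreducibility of X}) together with upper semicontinuity of the common kernel dimension. The paper's argument is terser---it simply asserts the lower bound is clear from the block description, whereas you spell out the kernel computation via $\Phi$; note that one could get the bound even more directly by observing that any $v_1\in\bigcap_i\ker b_i^T$ (an $(r-s)$-dimensional space) already gives $(v_0,v_1,0,0)\in\bigcap_i\ker A_i$.
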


\begin{proof}
    From the description of block matrices and from Remark \ref{3.20} it is clear that the socle dimension is equal to $\dim_{\KK}\bigcap_{i=1}^n\ker A_i\ge r-s+1$.  By Lemma \ref{irreducibility of X} it therefore suffices to find one $n$-tuple where the equality holds, and one such $n$-tuple is in Example \ref{ex: minimal socle dimension in X}.
\end{proof}

Theorem \ref{Gorenstein} then implies the following.

\begin{corollary}\label{s=r}
    Algebras corresponding to $n$-tuples from $\X{n}{r}{r}$ are smoothable if $r\le n$ and $r+n\le 11$.
\end{corollary}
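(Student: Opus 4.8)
The plan is to deduce this from Lemma~\ref{socle dimension in X} and Theorem~\ref{Gorenstein}, using the irreducibility of $\X{n}{r}{r}$ to pass from a general $n$-tuple to all of them. First I would note that an $n$-tuple in $\X{n}{r}{r}$ corresponds to a local algebra with Hilbert function $(1,n,r,1)$, hence of degree $d=n+r+2$, and the hypothesis $r+n\le 11$ gives $d\le 13$. Applying Lemma~\ref{socle dimension in X} with $s=r$, a general $n$-tuple in $\X{n}{r}{r}$ gives an algebra of socle dimension $r-r+1=1$, that is, a Gorenstein algebra, which is smoothable by Theorem~\ref{Gorenstein}. Translating through Corollary~\ref{dim_matrices-Hilbert_schemes} and the discussion of supports following it, the corresponding $n$-tuple then lies on the principal component of $\CommMat{d}{n}$.

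Next I would spread this out. By Lemma~\ref{irreducibility of X} the locus $\X{n}{r}{r}$ is irreducible, and the principal component of $\CommMat{d}{n}$ is closed, so the fact that its generic point lies on the principal component forces all of $\X{n}{r}{r}$ to lie on the principal component. Moreover, every $n$-tuple in $\X{n}{r}{r}$ admits a cyclic vector: in a basis realising the block shape \eqref{blocks_m^4}, the last basis vector is cyclic, since the $f_i$ span $\KK^n$, the common cokernel of the $D_i$ is trivial so that $\sum_i\im D_i=\KK^r$, and some $b_i$ is nonzero because $s=r\ge 1$. Hence such a tuple determines a point of $\Hilb_d(\mathbb{A}^n)$, and by the correspondence this point lies on the smoothable component, i.e. the associated algebra is smoothable.

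There is no genuine obstacle here, since the substance is already contained in Lemmas~\ref{irreducibility of X} and~\ref{socle dimension in X} and in Theorem~\ref{Gorenstein}. The one step requiring a little care is the upgrade from ``a general $n$-tuple in $\X{n}{r}{r}$ is smoothable'' to ``every $n$-tuple in $\X{n}{r}{r}$ is smoothable'', which rests precisely on the irreducibility of $\X{n}{r}{r}$ together with the closedness of the principal component (equivalently, of the smoothable locus); one should also record the easy observation above that every member of $\X{n}{r}{r}$ really does correspond to a point of the Hilbert scheme.
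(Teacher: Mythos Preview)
Your proof is correct and follows exactly the route the paper intends: the paper presents the corollary as an immediate consequence of Lemma~\ref{socle dimension in X} (giving socle dimension $1$ for general members when $s=r$) and Theorem~\ref{Gorenstein}, and you have correctly supplied the implicit step---using the irreducibility of $\X{n}{r}{r}$ from Lemma~\ref{irreducibility of X} together with closedness of the principal component---needed to pass from general to all $n$-tuples. Your extra verification that every tuple in $\X{n}{r}{r}$ has a cyclic vector is a useful sanity check that the paper takes for granted from the block description.
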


The following is the main result of this subsection.

\begin{proposition}\label{smooth1nr1}
    Let $n\ge 4$, $n+r\le 8$ and $d=n+r+2$. Then each local algebra with Hilbert function $(1,n,r,1)$ belongs to a non-elementary component of $\Hilb_d(\mathbb{A}^n)$.

    Moreover, if such an algebra has socle dimension 2, then it is smoothable.
\end{proposition}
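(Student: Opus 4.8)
The plan is to argue stratum by stratum using the loci $\X{n}{r}{s}$, $1\le s\le r$ (and note $r\le 8-n\le n$ because $n\ge 4$, so Lemma~\ref{irreducibility of X} applies). First, the cases $r\le 2$ are already settled: such algebras are smoothable by Propositions~\ref{Hf:1n1...1} and \ref{Hf:1n21}, and $\Hilb_d^{sm}(\AA^n)$ is non-elementary; likewise, for $d\le 8$ the statement follows from Theorem~\ref{8points} (if $d=8$ then necessarily $r=2$). So I may assume $d\in\{9,10\}$ and $r\ge 3$, which leaves only the Hilbert functions $(1,4,3,1)$, $(1,5,3,1)$, $(1,4,4,1)$, hence finitely many strata $\X{n}{r}{s}$. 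By Lemma~\ref{socle dimension in X} the general tuple of $\X{n}{r}{s}$ has socle dimension $r-s+1$, while every tuple of $\X{n}{r}{s}$ has socle dimension $\ge r-s+1$; in particular an algebra of socle dimension $2$ lies in $\X{n}{r}{r-1}$ or $\X{n}{r}{r}$. Since each $\X{n}{r}{s}$ is irreducible and a point of $\Hilb_d(\AA^n)$ is smooth precisely when the corresponding commuting tuple is smooth in $\CommMat{d}{n}$ (the corollary after Lemma~\ref{dim_tangent_spaces}), it suffices, for each $s$, to produce one tuple $A=(A_1,\dots,A_n)\in\X{n}{r}{s}$ that is a smooth point of $\CommMat{d}{n}$ and that lies on a component of $\CommMat{d}{n}$ containing tuples with more than one eigenvalue (resp., with $d$ distinct eigenvalues). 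Then $\overline{\X{n}{r}{s}}$ lies in the unique component through $A$, which by Corollary~\ref{dim_matrices-Hilbert_schemes} corresponds to a non-elementary component of $\Hilb_d(\AA^n)$ (resp., to $\Hilb_d^{sm}(\AA^n)$).

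The case $s=r$ is exactly Corollary~\ref{s=r}: the general tuple of $\X{n}{r}{r}$ yields a Gorenstein algebra of degree $d\le 10\le 13$, hence smoothable by Theorem~\ref{Gorenstein}, so all of $\overline{\X{n}{r}{r}}$ lies in $\Hilb_d^{sm}(\AA^n)$.

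For $1\le s<r$ I would, imitating Example~\ref{ex: minimal socle dimension in X} and the constructions in Propositions~\ref{HF:144,154} and \ref{HF:145}, write down an explicit one-parameter family $\bigl(A_1(\lambda),\dots,A_n(\lambda)\bigr)$ of commuting $d\times d$ matrices whose blocks degenerate at $\lambda=0$ into the form \eqref{blocks_m^4} with $\bigl(A_i(0)\bigr)_i\in\X{n}{r}{s}$, and such that for $\lambda\neq 0$: if $r-s+1\le 2$, the matrix $A_1(\lambda)$ has $d$ distinct eigenvalues, so the tuple is simultaneously diagonalizable and lies on the principal component; if $r-s+1\ge 3$, the tuple is, up to conjugation, the direct sum of a nilpotent commuting tuple realizing an elementary algebra of Hilbert function $(1,n',3)$ of smaller degree and finitely many scalar tuples with distinct eigenvalues, so that $A_1(\lambda)$ has at least two eigenvalues and the corresponding subscheme is not supported at a single point. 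I would then compute $T_{(A_i(0))}\CommMat{d}{n}$ via Lemma~\ref{tangent_space_commuting_marices} and check that its dimension equals that of the relevant component of $\CommMat{d}{n}$ (namely $d^2+(n-1)d$ when $r-s+1\le 2$), so that $\bigl(A_i(0)\bigr)$ is a smooth point of $\CommMat{d}{n}$ on that component. Combined with the first paragraph this yields the claim for all of $\overline{\X{n}{r}{s}}$: algebras of socle dimension $2$ then lie in $\overline{\X{n}{r}{r-1}}$ or $\overline{\X{n}{r}{r}}$, both landing on the principal component and hence smoothable (socle dimension $1$ being smoothable directly by Theorem~\ref{Gorenstein}), while all remaining algebras land on a component meeting the locus of disconnected schemes and are therefore non-elementary.

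The main obstacle is the concrete content of the last step: for each of the finitely many strata $\X{n}{r}{s}$ with $1\le s<r$ one must exhibit the commuting tuple and its degenerating family explicitly and carry out the tangent space dimension computation, and — in the socle dimension $\ge 3$ cases — arrange the family so that its general fibre is genuinely a degeneration of disjoint unions (equivalently, so that the component of $\CommMat{d}{n}$ through $(A_i(0))$ is one of the non-principal components corresponding under Corollary~\ref{dim_matrices-Hilbert_schemes} to the ``union with points'' components of $\Hilb_d(\AA^n)$). Everything else is formal bookkeeping with the component correspondence of Corollary~\ref{dim_matrices-Hilbert_schemes} and the dictionary of Remark~\ref{3.20}.
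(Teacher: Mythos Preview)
Your high-level skeleton matches the paper: stratify by $\X{n}{r}{s}$, dispose of $s=r$ via Corollary~\ref{s=r}, and treat $s<r$ separately. The execution for $s<r$, however, is genuinely different. You propose a uniform smooth-point method: for each stratum, find a single tuple $(A_i(0))\in\X{n}{r}{s}$, build a one-parameter family witnessing the target component, and verify smoothness by a tangent-space count. The paper does this only for the single case $n=r=4$, $s=3$. For $s=1$ and $s=2$ it instead produces a direct commuting perturbation valid for \emph{every} (resp.\ a general) tuple in the stratum, with no smoothness check and no need to know the dimension of the receiving component. Concretely: for $s=1$ one normalizes $b_1=e_r$, $b_i=0$ for $i\ge 2$; commutativity then forces the $(r{+}1)$-th row of each $A_i$ ($i\ge 2$) to vanish, so $E_{r+1,r+1}$ commutes with $A_2,\dots,A_n$ and $(A_1+\lambda E_{r+1,r+1},A_2,\dots,A_n)$ acquires a second eigenvalue. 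For $s=2$ an analogous rank-two diagonal-block perturbation works. Since the union of non-elementary components is closed, this already proves the non-elementary claim for the whole stratum. For smoothability in the $s=2$, $r=3$ cases the paper again avoids a smooth-point check: it reads off that the restriction of the perturbed tuple to the generalized $0$-eigenspace has degree $7$ (for $n=4$) or Hilbert function $(1,5,2)$ (for $n=5$), and invokes known irreducibility there.

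What your route buys is uniformity; what it costs is that for each of the strata with $s\le r-2$ you must (i) know in advance the dimension of the relevant non-principal ``union-with-points'' component (this is available from the $d=8$ classification, so no circularity), and (ii) actually land a smooth point of $\CommMat{d}{n}$ inside $\X{n}{r}{s}$. Point~(ii) is the real risk you flag: nothing guarantees a priori that $\X{n}{r}{s}$ contains a smooth point of $\CommMat{d}{n}$, so your method could stall where the paper's direct deformation cannot. In short, your proposal is a valid strategy, but the paper's argument for $s\in\{1,2\}$ is both shorter and unconditional.
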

\begin{proof}
    Note that the conditions $n\ge 4$ and $n+r\le 8$ imply $r\le n$. Note also that it suffices to consider algebras supported at zero, and then by the discussion from the beginning of this subsection it suffices to show that $\X{n}{r}{s}$ is a subset of the principal component of $\CommMat{d}{n}$ for each $s=1,\ldots ,r$. Moreover, by Corollary \ref{s=r} it suffices to consider the cases when $1\le s\le r-1$.

    We first consider the case $s=1$. Without loss of generality we may assume that $b_1=e_r$ and $b_i=0$ for $i\ge 2$ (see the proof of Lemma \ref{irreducibility of X}). Commutativity then implies $e_r^TD_i=0$ for $i\ge 2$, so that the $(r+1)$-th row of $A_i$ is zero for $i\ge 2$. It follows that the matrix $E_{r+1,r+1}$ commutes with $A_i$ for each $i\ge 2$. For each $\lambda \ne 0$ the matrix $A_1+\lambda E_{r+1,r+1}$ has two distinct eigenvalues, so the $n$-tuple $(A_1+\lambda E_{r+1,r+1},A_2,\ldots ,A_n)$ belongs to a non-elementary component of $\CommMat{d}{n}$ for each $\lambda\ne 0$, and so does $(A_1,\ldots ,A_n)$.
    
Note that if the socle dimension of an algebra from $\X{n}{r}{1}$ is 2, then $r\le 2$ by Proposition \ref{socle dimension in X}, hence the algebra is smoothable by Propositions \ref{Hf:1n1...1} and \ref{Hf:1n21}.

Next, we consider the case $s=2$. Again we may assume that $f_i=e_i$ for $i=1,\ldots ,n$, $b_1=e_r$, $b_2=e_{r-1}$ and $b_i=0$ for $i\ge 3$. Write $$D_i=\begin{bmatrix}
    D_i^{(1)} & D_i^{(2)} \\ D_i^{(3)} & D_i^{(4)}
\end{bmatrix},$$ where $D_i^{(1)}\in \mathbb{M}_{(r-2)\times 2}$, $D_i^{(2)}\in \mathbb{M}_{(r-2)\times (n-2)}$, $D_i^{(3)}\in \mathbb{M}_2$ and $D_i^{(4)}\in \mathbb{M}_{2\times (n-2)}$. By the irreducibility of $\X{n}{r}{2}$ and Example \ref{ex: minimal socle dimension in X} we may assume that $D_1^{(3)}$ is invertible. Now we use the commutativity conditions. Since $b_1^TD_i=b_2^TD_i=0$ for $i\ge 3$, we get $D_i^{(3)}=0$ and $D_i^{(4)}=0$ for $i\ge 3$. Moreover, from $D_if_j=D_jf_i$ for $i\le 2$ and $j\ge 3$ we get $D_1^{(4)}=D_2^{(4)}=0$. Finally, the equation $b_1^TD_2=b_2^TD_1$ implies $D_2^{(3)}=e_1u^T+e_2e_1^TD_1^{(3)}$ for some $u\in \KK^2$. Write the matrices in the block form with respect to the partition $(1,r-2,2,n,1)$ and define
$$X_1=\begin{bmatrix}
    0 & 0 & 0 & 0 & 0\\
    0 & 0 & 0 & 0 & 0\\
    0 & 0 & \mathbf{I} & 0 & 0\\
    0 & 0 & 0 & 0 & 0\\
    0 & 0 & 0 & 0 & 0
\end{bmatrix}\quad \mathrm{and}\quad X_2=\begin{bmatrix}
    0 & 0 & 0 & 0 & 0\\
    0 & 0 & 0 & 0 & 0\\
    0 & 0 & e_2e_1^T+e_1u^T(D_1^{(3)})^{-1} & 0 & 0\\
    0 & 0 & 0 & 0 & 0\\
    0 & 0 & 0 & 0 & 0
\end{bmatrix}.$$
Then $(A_1+\lambda X_1,A_2+\lambda X_2,A_3,\ldots ,A_n)\in \CommMat{d}{n}$ for each $\lambda \in \KK$. Moreover, for $\lambda \ne 0$ the matrix $A_1+\lambda X_1$ has two distinct eigenvalues, so the $n$-tuple belongs to a non-elementary component of $\CommMat{d}{n}$.

We now show the smoothability statement for algebras corresponding to $n$-tuples from $\X{n}{r}{2}$ that have socle dimension 2. First, by Proposition \ref{socle dimension in X} it follows that $r\le 3$, and by Lemma \ref{irreducibility of X} and Corollary \ref{s=r} we may assume $r=3$. (If $r=2$, we could alternatively use Proposition \ref{Hf:1n21}.) Then $n\in \{4,5\}$. Consider the above $n$-tuple $(A_1+\lambda X_1,A_2+\lambda X_2,A_3,\ldots ,A_n)$ where $\lambda \ne 0$. If $n=4$, then the generalized eigenspace of $A_1+\lambda X_1$ that corresponds to the eigenvalue 0 (i.e., the kernel of $(A_1+\lambda X_1)^9$) has dimension 7, so the irreducibility of $\Hilb_7(\mathbb{A}^4)$ implies that $\cA$ is smoothable. If $n=5$, then we can compute that the restrictions of the matrices $A_1+\lambda X_1,A_2+\lambda X_2,A_3,A_4,A_5$ to the kernel of $(A_1+\lambda X_1)^{10}$ in general determine an algebra with Hilbert function $(1,5,2)$. Such an algebra is smoothable by Theorem \ref{Shafarevich}, hence $\cA$ is smoothable.

It remains to consider the case $n=r=4$, $s=3$. We will find a quadruple in $\X{4}{4}{3}$ which is a smooth point on the principal component of $\CommMat{10}{4}$. The entire irreducible locus $\X{4}{4}{3}$ will then belong to the principal component, and the corresponding algebras will be smoothable.

Let
$$A_1=\begin{bmatrix}
    0 & 0 & 0 & 0 & 1 & 0 & 0 & 0 & 0 & 0\\
    0 & 0 & 0 & 0 & 0 & 0 & 0 & 0 & 1 & 0\\
    0 & 0 & 0 & 0 & 0 & 1 & 0 & 0 & 0 & 0\\
    0 & 0 & 0 & 0 & 0 & 0 & 1 & 0 & 0 & 0\\
    0 & 0 & 0 & 0 & 0 & 0 & 0 & 1 & 0 & 0\\
    0 & 0 & 0 & 0 & 0 & 0 & 0 & 0 & 0 & 1\\
    0 & 0 & 0 & 0 & 0 & 0 & 0 & 0 & 0 & 0\\
    0 & 0 & 0 & 0 & 0 & 0 & 0 & 0 & 0 & 0\\
    0 & 0 & 0 & 0 & 0 & 0 & 0 & 0 & 0 & 0\\
    0 & 0 & 0 & 0 & 0 & 0 & 0 & 0 & 0 & 0
\end{bmatrix}, A_2=\begin{bmatrix}
    0 & 0 & 0 & 1 & 0 & 0 & 0 & 0 & 0 & 0\\
    0 & 0 & 0 & 0 & 0 & 0 & 0 & 1 & -3 & 0\\
    0 & 0 & 0 & 0 & 0 & 0 & 0 & 0 & 0 & 0\\
    0 & 0 & 0 & 0 & 0 & 1 & 0 & 0 & 0 & 0\\
    0 & 0 & 0 & 0 & 0 & 0 & 1 & 0 & 0 & 0\\
    0 & 0 & 0 & 0 & 0 & 0 & 0 & 0 & 0 & 0\\
    0 & 0 & 0 & 0 & 0 & 0 & 0 & 0 & 0 & 1\\
    0 & 0 & 0 & 0 & 0 & 0 & 0 & 0 & 0 & 0\\
    0 & 0 & 0 & 0 & 0 & 0 & 0 & 0 & 0 & 0\\
    0 & 0 & 0 & 0 & 0 & 0 & 0 & 0 & 0 & 0
\end{bmatrix},$$
$$A_3=\begin{bmatrix}
    0 & 0 & 1 & 0 & 0 & 0 & 0 & 0 & 0 & 0\\
    0 & 0 & 0 & 0 & 0 & 0 & 1 & -3 & -1 & 0\\
    0 & 0 & 0 & 0 & 0 & 0 & 0 & 0 & 0 & 0\\
    0 & 0 & 0 & 0 & 0 & 0 & 0 & 0 & 0 & 0\\
    0 & 0 & 0 & 0 & 0 & 1 & 0 & 0 & 0 & 0\\
    0 & 0 & 0 & 0 & 0 & 0 & 0 & 0 & 0 & 0\\
    0 & 0 & 0 & 0 & 0 & 0 & 0 & 0 & 0 & 0\\
    0 & 0 & 0 & 0 & 0 & 0 & 0 & 0 & 0 & 1\\
    0 & 0 & 0 & 0 & 0 & 0 & 0 & 0 & 0 & 0\\
    0 & 0 & 0 & 0 & 0 & 0 & 0 & 0 & 0 & 0
\end{bmatrix}, A_4=\begin{bmatrix}
    0 & 0 & 0 & 0 & 0 & 0 & 0 & 0 & 0 & 0\\
    0 & 0 & 0 & 0 & 0 & 1 & -3 & -1 & -2 & 0\\
    0 & 0 & 0 & 0 & 0 & 0 & 0 & 0 & 0 & 0\\
    0 & 0 & 0 & 0 & 0 & 0 & 0 & 0 & 0 & 0\\
    0 & 0 & 0 & 0 & 0 & 0 & 0 & 0 & 0 & 0\\
    0 & 0 & 0 & 0 & 0 & 0 & 0 & 0 & 0 & 0\\
    0 & 0 & 0 & 0 & 0 & 0 & 0 & 0 & 0 & 0\\
    0 & 0 & 0 & 0 & 0 & 0 & 0 & 0 & 0 & 0\\
    0 & 0 & 0 & 0 & 0 & 0 & 0 & 0 & 0 & 1\\
    0 & 0 & 0 & 0 & 0 & 0 & 0 & 0 & 0 & 0
\end{bmatrix},$$
and $X_1=E_{33}+E_{44}+E_{55}$, $X_2=E_{43}+E_{54}$, $X_3=E_{53}$. Then $(A_1+\lambda X_1,A_2+\lambda X_2,A_3+\lambda X_3,A_4)\in \CommMat{10}{4}$ for each $\lambda \in \KK$. For $\lambda \ne 0$ the matrix $A_1+\lambda X_1$ has two distinct eigenvalues and the generalized eigenspace corresponding to the eigenvalue 0 is 7-dimensional, so the irreducibility of $\Hilb_7(\mathbb{A}^4)$ implies that $(A_1,\ldots ,A_4)$ belongs to the principal component of $\CommMat{10}{4}$. The tangent space to $\CommMat{10}{4}$ at $(A_1,\ldots ,A_4)$ has dimension $130=10^2+3\cdot 10$, so $(A_1,\ldots ,A_4)$ is a smooth point on the principal component, as required.
\end{proof}

\begin{remark}
    Note that in the cases $s\in \{1,2\}$ the above proof is independent of $n$ and $r$.
\end{remark}

\subsubsection{Local algebras with Hilbert function $(1,4,3,2)$}\label{Local1432}

In this subsection, we treat the algebras with Hilbert function $(1,4,3,2)$ and we prove the following main result.
\begin{proposition}\label{HF:1432}
    Local algebras with Hilbert function $(1,4,3,2)$ belong to non-elementary components of $\Hilb_{10}(\mathbb{A}^4)$.

    Moreover, if such an algebra has socle dimension 2, then it is smoothable.
\end{proposition}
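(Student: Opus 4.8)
The plan is to pull back the two irreducible components of the graded Hilbert scheme found in Proposition~\ref{HF1m32graded} along $\pi_H$, and then to degenerate the general member of each resulting component to a disjoint union of a reduced point and an algebra of degree $9$ that is already understood. After reducing to algebras supported at the origin, I would analyze $\pi_H\colon\HilbFunc{(1,4,3,2)}{4}\to\HilbFuncGr{(1,4,3,2)}{4}$, whose target is $\overline{(\mathrm i)}\cup\overline{(\mathrm{ii})}$. Arguing as in Proposition~\ref{fiberdim} — writing the general ideal $J$ in the fibre of $\pi_H$ over a general graded ideal of each type, imposing that its initial ideal for the weight $(-1,-1,-1,-1)$ be the prescribed graded ideal, and reading off from the Buchberger criterion which coefficients of $J$ remain free — I would prove that the fibres are irreducible, so that $\HilbFunc{(1,4,3,2)}{4}$ has exactly two irreducible components: $P$ lying over $\overline{(\mathrm i)}$ and $Q$ lying over $\overline{(\mathrm{ii})}$. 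This fibre computation is the principal technical step.

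For $Q$: a general graded ideal of type (ii) is, after a linear change of coordinates, $\Ann(x_1^3-ax_2^3,\,x_1^3-bx_3^3,\,x_4)$, the connected sum of the degree-$9$ algebra $B=\Apolar(x_1^3-ax_2^3,\,x_1^3-bx_3^3)$, embedded in $\AA^3$, with a double point. For a general $\cA\in Q$ one has $\rord_4(I)=2$, and I would check that the associated upper ray family is flat and that its fibre over $t=\lambda\neq 0$ is $\Spec B'\sqcup\{\text{point}\}$ for a degree-$9$ algebra $B'$ embedded in $\AA^3$. Since $\Hilb_9(\AA^3)$ is irreducible (Theorem~\ref{11points}), $B'$ is smoothable, hence so is $\cA$; thus $Q\subseteq\Hilb_{10}^{sm}(\AA^4)$, and in particular $Q$ is non-elementary.

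For $P$: a general graded ideal of type (i) is $\Ann(x_1^3,x_2^3,q)$ with $q$ a rank-$2$ quadric, so a general $\cA\in P$ has $\rord_1(I)=4$, its inverse system containing a polynomial with leading form $x_1^3$. The upper ray family with respect to $\alpha_1$ then degenerates $\cA$ to $\Spec\cA'\sqcup\{\text{point}\}$ with $\cA'$ of degree $9$ and Hilbert function $(1,4,3,1)$; by Proposition~\ref{smooth1nr1}, $\cA'$ lies in a non-elementary component of $\Hilb_9(\AA^4)$, hence so do $\Spec\cA'\sqcup\{\text{point}\}$ and $\cA$. Combining the two cases, every local algebra with Hilbert function $(1,4,3,2)$ lies in a non-elementary component of $\Hilb_{10}(\AA^4)$.

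It remains to treat socle dimension $2$. Such an algebra lies in $P$ or in $Q$. If $\cA\in Q$, it is smoothable by the paragraph above. If $\cA\in P$, then $\cA$ degenerates, as above, to $\Spec\cA'\sqcup\{\text{point}\}$ with $\cA'$ of Hilbert function $(1,4,3,1)$; since the algebra part of a ray degeneration has socle dimension at most that of $\cA$, the algebra $\cA'$ has socle dimension at most $2$ and is therefore smoothable by Proposition~\ref{smooth1nr1} (or by Theorem~\ref{Gorenstein} if it is Gorenstein), so $\cA$ is smoothable. Besides the fibre analysis, the remaining work is to verify flatness of the two ray families and to identify their fibres over $t\neq0$: the first is essentially of the type handled by Proposition~\ref{ref:flat:prop}, while the second requires a direct check, since the Hilbert function $(1,4,3,2)$ is not of the form treated in Section~\ref{RayMethod}.
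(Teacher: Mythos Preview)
Your approach is genuinely different from the paper's. The paper works entirely on the commuting-matrix side: for a general algebra in each component it writes the multiplication matrices $A_i$ in block form \eqref{matrices:2341}, analyzes the commutativity constraints on the blocks, and then produces explicit perturbations $X_i$ (supported on certain diagonal blocks) so that $(A_1+\lambda X_1,\ldots)$ still commute but $A_1+\lambda X_1$ acquires a second eigenvalue for $\lambda\ne 0$. This immediately gives non-elementariness, and in Case~(ii) the $7$-dimensional generalized $0$-eigenspace plus irreducibility of $\Hilb_7(\AA^4)$ gives smoothability. No ray families are used.

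Two simplifications you are missing. First, the fibre analysis you flag as the ``principal technical step'' is unnecessary: the paper invokes \cite[Proposition~4.3]{ccvv}, which already says that the irreducible components of $\HilbFunc{(1,4,3,2)}{4}$ are exactly the preimages under $\pi_H$ of the two components of $\HilbFuncGr{(1,4,3,2)}{4}$. Second, and more importantly for the socle-dimension-$2$ statement: in Case~(i) the socle always has dimension at least $3$. Indeed, if $\gr\cA=R/\Ann(x_1^3,x_2^3,q)$ and one chooses a basis $\{\alpha_1^3,\alpha_2^3,q',\alpha_1^2,\alpha_2^2,\alpha_1,\ldots,1\}$ of $\cA$ with $q'$ a quadric free of $\alpha_1^2,\alpha_2^2$, then $\alpha_i q'$ is a cubic with no $\alpha_1^3,\alpha_2^3$ monomial, hence lies in $\inn(I)_3=I_3$ (using $\mathfrak m^4\subset I$), so $q'\in\Soc(\cA)$; together with $\alpha_1^3,\alpha_2^3\in\mathfrak n^3\subset\Soc(\cA)$ this gives $\tau(\cA)\ge 3$. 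Thus the socle-$2$ case for $P$ is vacuous, and no degeneration argument is needed there. (In fact the paper remarks, from a tangent-space check, that the Case~(i) algebras are \emph{not} smoothable.)

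Your ray-degeneration outline is plausible but the $P$ side is the one to worry about. Neither of your two ray families is of the shape in Section~\ref{RayMethod}: there the inverse system has a single top-degree generator $x_1^{(s)}+g$ with $\alpha_1^c\lrcorner g=0$, whereas for $P$ you have two independent degree-$3$ generators $x_1^3,x_2^3$, so Proposition~\ref{ref:flat:prop} and Lemma~\ref{ref:hilbert_function:lemma} do not apply even ``essentially''. You would need a fresh argument both for flatness of the upper ray family in $\alpha_1$ and for the claim that the non-point factor of the generic fibre has Hilbert function $(1,4,3,1)$; this is not automatic. Your $Q$ ray family (in $\alpha_4$, with $\rord_4=2$) is closer in spirit to the ``extra linear variable'' situation and is more likely to go through, but still requires a direct check.
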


\begin{proof}
    We first note that we proved in Proposition \ref{HF1m32graded} that the Hilbert scheme $\HilbFuncGr{(1,4,3,2)}{4}$ has exactly two irreducible components, which we described explicitly. Moreover, \cite[Proposition 4.3]{ccvv} implies that $\HilbFunc{(1,4,3,2)}{4}$ also has exactly two irreducible components, which are the preimages of the two components of $\HilbFuncGr{(1,4,3,2)}{4}$. It therefore suffices to prove the proposition for the general algebras from the two components of $\HilbFunc{(1,4,3,2)}{4}$.

    Let $(\cA,\mathfrak{n})$ be a local algebra with Hilbert function $(1,4,3,2)$ which is supported at 0. Let $(A_1,A_2,A_3,A_4)\in \CommMat{10}{4}$ be the corresponding quadruple of nilpotent commuting matrices. As in the previous subsection we can find a basis such that 
    the matrices $A_i$ are of the form
    \begin{equation}\label{matrices:2341}
    \begin{bmatrix}
        0 & B_i & C_i & 0\\
        0 & 0 & D_i & 0\\
        0 & 0 & 0 & f_i\\
        0 & 0 & 0 & 0
    \end{bmatrix}
    \end{equation}
    with blocks of respective sizes 2, 3, 4, 1, such that $f_1,\ldots ,f_4$ are linearly independent, $\bigcap_{i=1}^4\ker D_i^T=\{0\}$ and $\bigcap_{i=1}^4\ker B_i^T=\{0\}$. Without loss of generality we may assume that $f_i=e_i$ for each $i$.

    If any linear combination of $B_1,\ldots ,B_4$ has rank at most 1, then these matrices either have common 1-dimensional image or common 2-dimensional kernel, see \cite[Lemma 2]{AL}. The first option is not possible, since the common cokernel of the matrices $B_1,\ldots ,B_4$ is trivial. In the second case we may assume without loss of generality that $B_1=E_{13}$, $B_2=E_{23}$ and $B_i=0$ for $i=3,4$. However, then the commutativity implies that the last row of $D_i$ is zero for each $i$, which contradicts the assumption that the common cokernel of the matrices $D_i$ is trivial.
    
    We have shown that some linear combination of $B_1,\ldots ,B_4$ has rank 2. Now we consider two cases.

    {\bf Case 1. Assume that we are in Case (i) of Proposition \ref{HF1m32graded}.} Up to a linear change of variables we assume that $\gr \cA=R/I$ where $I=\Ann(x_1^3,x_2^3,q)$ where $q$ is a quadric such that $q,x_1^2,x_2^2$ are linearly independent. Then $I$ is generated by all quadrics without the monomials $\alpha_1^2,\alpha_2^2$ that annihilate $q$ and all cubics without the monomials $\alpha_1^3,\alpha_2^3$. We may therefore take a $\KK$-basis of $\cA$ of the form $\{\alpha_1^3,\alpha_2^3,q',\alpha_1^2,\alpha_2^2,\alpha_1,\alpha_2,\alpha_3,\alpha_4,1\}$ for some quadric $q'$ without the monomials $\alpha_1^2,\alpha_2^2$, so that the matrices $A_i$ are of the form \eqref{matrices:2341}. Observe that $q'$ is in the kernel of multiplication by any linear form, so the matrices $B_i$ are of the form $B_i=\begin{bmatrix}
        0 & B_i'
    \end{bmatrix}$ for some $2\times 2$ matrices $B_i'$. Furthermore, by a linear change of variables we may assume without any loss of generality that $\mathrm{rank}\, B_1=2$, and then that $B_1'=\mathbf{I}$. Write the matrices $D_i$ in the form $D_i=\begin{bmatrix}
        d_i^T\\D_i'
    \end{bmatrix}$ where $d_i\in \KK^4$ and $D_i'\in \mathbb{M}_{2\times 4}$. Commutativity then implies $D_i'=B_i'D_1'$ for $i=2,3,4$, $(B_i'B_j'-B_j'B_i')D_1'=0$ for $i,j=2,3,4$, and $D_1'e_i=B_i'D_1'e_1$ for $i=2,3,4$.

    First note that $D_1'$ is nonzero, as otherwise the matrices $D_1,D_2,D_3,D_4$ would have a 2-dimensional common cokernel. Assume that $D_1'$ has rank 1. Write $D_1'=uv^T$ for some (nonzero) $u\in \KK^2$ and $v\in \KK^4$. Then $uv^Te_i=B_i'uv^Te_1$ for all $i=2,3,4$. If $v^Te_1=0$, then $uv^Te_i=0$ for $i=2,3,4$, implying either $u=0$ or $v=0$, a contradiction. On the other hand, if $v^Te_1\ne 0$, then $B_i'u$ is a multiple of $u$ for $i=2,3,4$, implying that the matrices $D_1',\ldots ,D_4'$ have a nontrivial common cokernel, which is again a contradiction.

    The above shows that the rank of $D_1'$ is 2. However, the equality $(B_i'B_j'-B_j'B_i')D_1'=0$ then implies that the linear span of $B_1',\ldots ,B_4'$ is a linear space of commuting $2\times 2$ matrices and hence at most 2-dimensional. We may therefore assume that $B_3'=B_4'=0$. Commutativity then implies $D_3'=D_4'=0$, $D_2'=B_2'D_1'$ and $D_1'e_3=D_1'e_4=0$. Now write the matrices in block form with respect to the partition $(2,1,2,4,1)$ and define
    $$X_1=\begin{bmatrix}
        0 & 0 & 0 & 0 & 0\\
        0 & 0 & 0 & 0 & 0\\
        0 & 0 & \mathbf{I} & 0 & 0\\
        0 & 0 & 0 & 0 & 0\\
        0 & 0 & 0 & 0 & 0
    \end{bmatrix},\quad  X_2=\begin{bmatrix}
        0 & 0 & 0 & 0 & 0\\
        0 & 0 & 0 & 0 & 0\\
        0 & 0 & B_2' & 0 & 0\\
        0 & 0 & 0 & 0 & 0\\
        0 & 0 & 0 & 0 & 0
    \end{bmatrix}.$$
    For each $\lambda \in \KK$ the quadruple $(A_1+\lambda X_1,A_2+\lambda X_2,A_3,A_4)$ belongs to $\CommMat{10}{4}$. Since $A_1+\lambda X_1$ has two distinct eigenvalues for $\lambda\ne 0$, the algebra $\cA$ belongs to a non-elementary component of $\Hilb_{10}(\mathbb{A}^4)$. (However, a tangent space computation shows that it does not belong to the smoothable component.)

    For the statement about smoothability observe that the socle of $\cA$ contains $x_1^3$, $x_2^3$ and $q'$, so it is at least 3-dimensional.

    {\bf Case 2. Assume that we are in Cases (ii)-(v) of Proposition \ref{HF1m32graded}.} Cases (iii), (iv), and (v) are degenerations of Case (ii), so we may assume we are in Case (ii). By the proof of Proposition \ref{HF1m32graded} we may therefore assume that $\gr \cA=R/I$ where $I=\Ann (c_1,c_2)$ for some cubics $c_1,c_2$ in the variables $x_1,x_2,x_3$ only, where $c_2$ depends essentially on all three variables.
    It follows that $I$ contains all quadrics divisible by $\alpha_4$. This means that $\alpha_4$ is in the kernel of multiplication by any linear form in $\gr \cA$, or equivalently, that the common kernel of $D_1,\ldots ,D_4$ is nontrivial. We can therefore write $D_i=\begin{bmatrix}
        D_i' & 0
    \end{bmatrix}$ for each $i$. Recall that we assumed that $f_i=e_i$ for each $i$. Commutativity therefore implies $D_4=0$, and then $B_4D_i=0$ for $i=1,2,3$. By the assumption the common cokernel of $D_1,D_2,D_3$ (and $D_4$) is trivial, so we get also $B_4=0$. On the other hand, recall that we already know that some linear combination of $B_1,\ldots ,B_4$ has rank 2, and we may assume that $\rank B_1=2$. 

    Recall that the closure of the locus in $\HilbFunc{(1,4,3,2)}{4}$ that corresponds to Case (ii) of Proposition \ref{HF1m32graded} is irreducible. Therefore we may assume any open condition on the matrices $A_1,\ldots ,A_4$. We therefore assume that some linear combination of $D_1',D_2',D_3'$ is invertible and that the common kernel of $B_1,B_2,B_3$ is trivial. (It is easy to see that some such quadruple $(A_1,\ldots ,A_4)$ exists.) Without loss of generality we may then assume that $D_1'=\mathbf{I}$. Commutativity then implies
    \begin{equation}\label{n^4=0 commutativity}
    D_i'e_1=e_i\,\, \mathrm{for}\,\, i=2,3,\quad D_2'e_3=D_3'e_2, \quad B_i=B_1D_i'\,\, \mathrm{for}\,\, i=2,3\quad \mathrm{and}\quad B_1(D_2'D_3'-D_3'D_2')=0.
    \end{equation}
    Recall that $B_1$ has rank 2. By changing the basis and adding suitable multiples of $A_1$ and $A_2$ to $A_3$ (in order to keep $D_1'=\mathbf{I}$ and $f_3=e_3$) we may assume that $B_1=\begin{bmatrix}
        1 & 0 & 0\\
        0 & 1 & 0
    \end{bmatrix}$. Now after a short computation we see that the equalities \eqref{n^4=0 commutativity} together with the assumption that the common kernel of $B_1,B_2,B_3$ is trivial implies that $D_2'$ and $D_3'$ commute. Let $$X_i=\begin{bmatrix}
        0 & 0 & 0 & 0\\
        0 & D_i' & 0 & 0\\
        0 & 0 & 0 & 0\\
        0 & 0 & 0 & 0
    \end{bmatrix}$$ for $i=1,2,3$. Then $(A_1+\lambda X_1,A_2+\lambda X_2,A_3+\lambda X_3,A_4)\in \CommMat{10}{4}$ for each $\lambda \in \KK$. Since $A_1+\lambda X_1$ has two distinct eigenvalues for $\lambda \ne 0$, the quadruple belongs to a non-elementary component of $\CommMat{10}{4}$.
    
    To prove smoothability observe that the generalized eigenspace of $A_1+\lambda X_1$ corresponding to eigenvalue 0 (i.e. $\ker (A_1+\lambda X_1)^{10}$) has dimension 7, and smoothability then follows from irreducibility of $\Hilb_7(\mathbb{A}^4)$.
\end{proof}

\subsection{Components of $\Hilb_d(\mathbb{A}^n)$ for $d\in \{9,10\}$}\label{subsection:mainTh}

\begin{theorem}\label{mainTh}
    Let $d\in \{9,10\}$. The only elementary components of $\Hilb_d(\mathbb{A}^n)$ are the closures of the loci of local algebras with the Hilbert functions $(1,5,3)$ and $(1,6,3)$.

    Moreover, all local algebras of degree 9 or 10 and socle dimension 2 are smoothable.
\end{theorem}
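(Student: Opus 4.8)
The plan is to assemble the case analyses of Sections \ref{DescriptionOfComponents}--\ref{Components} into a single census of local algebras. By \cite[Lemma~1]{Iarrobino73} the general point of an elementary component of $\Hilb_d(\AA^n)$ is $\Spec\cA$ for a local algebra $\cA$ of degree $d$ supported at one point, and by Theorem \ref{11points} the scheme $\Hilb_d(\AA^n)$ is irreducible whenever $n\le 3$; moreover, it is well known that smoothability and membership in a non-elementary component are insensitive to enlarging the ambient affine space beyond the embedding dimension. So I would first list every Hilbert function $H=(1,H(1),\dots,H(s))$ of a local algebra with $\sum_i H(i)=d$ and $H(1)\ge 4$, organized by the socle degree $s$; Macaulay's bound keeps the list short --- one obtains exactly twelve such Hilbert functions for $d=9$ and nineteen for $d=10$ --- and the goal is to show that for each $H$ other than $(1,5,3)$ (for $d=9$) and $(1,6,3)$ (for $d=10$), every local algebra with Hilbert function $H$ lies in a non-elementary component of $\Hilb_d(\AA^n)$.

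Next I would match each Hilbert function to an earlier result. The functions $(1,n)$ and $(1,n,1,\dots,1)$ are covered by \cite[Prop.~4.15]{ccvv} and Proposition \ref{Hf:1n1...1}; the functions $(1,n,r)$ with $r\le 2$, together with $(1,n,2,1)$ and $(1,n,2,2)$, by Theorem \ref{Shafarevich} and Propositions \ref{Hf:1n21}, \ref{Hf:1n22}; the remaining socle-degree-two functions $(1,4,4),(1,5,4),(1,4,5)$ by Propositions \ref{HF:144,154} and \ref{HF:145}; the functions $(1,n,r,1)$ with $n+r\le 8$ and the function $(1,4,3,2)$ by Propositions \ref{smooth1nr1} and \ref{HF:1432}; the function $(1,4,2,2,1)$ by Proposition \ref{Sm14221}; and the functions $(1,4,2,1,1),(1,4,3,1,1),(1,5,2,1,1),(1,4,2,1,1,1)$, which are of the shape $(1,H_1,\dots,H_c,1,\dots,1)$ with socle degree $\ge 2c$, by Theorem \ref{rayflat} (since the limiting scheme there is disconnected, $\Spec\cA$ lies in a non-elementary component). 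The only functions on the two lists for which no non-elementary component is produced are $(1,5,3)$ (for $d=9$) and $(1,6,3)$ (for $d=10$); for these, Theorem \ref{Shafarevich} provides a generically reduced elementary component of $\Hilb_d(\AA^{d-4})$, which remains an elementary component of $\Hilb_d(\AA^n)$ for every $n\ge d-4$ (it becomes a bundle over $\Gr(d-4,n)$), as recorded in Table~1.

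I would then turn this census into the statement about components. Let $Z\subseteq\Hilb_d(\AA^n)$ be an elementary component and $H$ the Hilbert function of its general point. A dense open subset of $Z$ lies in $\HilbFunc{H}{n}$, so $Z$ is contained in the closure of one of the finitely many irreducible components $V_i$ of $\HilbFunc{H}{n}$ (indeed $\HilbFunc{H}{n}$ is irreducible when $s\le 2$ by \cite[Prop.~4.2]{ccvv}); since $\overline{V_i}$ is irreducible it lies inside a unique irreducible component $W$ of $\Hilb_d(\AA^n)$, whence $Z\subseteq W$ and therefore $Z=W$. If $H\ne(1,5,3),(1,6,3)$ the previous step forces $W$ to be non-elementary, a contradiction; hence $H\in\{(1,5,3),(1,6,3)\}$, and the same inclusions give $Z=\overline{\HilbFunc{H}{n}}$, one of the two claimed components. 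For the second assertion I would re-run the census: among the listed Hilbert functions, those admitting a local algebra of socle dimension exactly $2$ are precisely those whose last nonzero value is at most $2$, and for each such function the matched result --- directly, or through the socle-dimension-$2$ clauses of Propositions \ref{smooth1nr1}, \ref{HF:1432}, or through Corollary \ref{rayflat_socledim2} in the tail-of-ones cases --- already yields smoothability; the socle-dimension-one algebras of degree $9,10$ are smoothable anyway by Theorem \ref{Gorenstein}.

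I expect the main obstacle to be purely organizational: verifying that the census of Hilbert functions is complete and that every case falls under a result whose hypotheses are genuinely satisfied --- in particular that Theorem \ref{rayflat} applies exactly to the tail-of-ones functions with socle degree $\ge 2c$, while the short tails $(1,4,4,1),(1,5,3,1),(1,4,3,2),(1,4,2,2,1)$ lie outside its range and need the dedicated Propositions \ref{smooth1nr1}, \ref{HF:1432}, \ref{Sm14221}. A secondary point to make explicit is why an elementary component cannot be contained in a non-elementary one (the irreducibility argument above), and one must remember to record that $(1,5,3)$ and $(1,6,3)$ do genuinely give elementary components, not merely that nothing else does.
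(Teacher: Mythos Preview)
Your proposal is correct and follows essentially the same approach as the paper's own proof: both reduce to a census of Hilbert functions with $H(1)\ge 4$ (using Theorem~\ref{11points} to dispose of embedding dimension $\le 3$) and then dispatch each case to the appropriate earlier result, invoking Theorem~\ref{Shafarevich} for the two genuine elementary components and Corollary~\ref{rayflat_socledim2} in place of Theorem~\ref{rayflat} for the socle-dimension-$2$ claim. Your write-up is somewhat more explicit than the paper's about why ``every local algebra lies in a non-elementary component'' rules out further elementary components; the only cosmetic slip is the word ``unique'' when passing from $\overline{V_i}$ to a component $W$ of $\Hilb_d(\AA^n)$, but the argument goes through by simply choosing $W$ to be any non-elementary component containing $\overline{V_i}$.
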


\begin{proof}
    The closures of the loci of local algebras with Hilbert functions $(1,5,3)$ and $(1,6,3)$ are elementary components of $\Hilb_9(\mathbb{A}^n)$ and $\Hilb_{10}(\mathbb{A}^n)$, respectively, by Theorem \ref{Shafarevich}.

    To show that there are no other elementary components we have to show that each other local algebra belongs to some non-elementary components. By Theorem \ref{11points} it suffices to consider algebras with embedded dimension more than 3. Possible remaining Hilbert functions are therefore $(1,n,r)$ with $r\le 2$, $(1,4,4)$, $(1,5,4)$, $(1,4,5)$, $(1,n,r,1)$ with $n\ge 4$ and $n+r\le 8$, $(1,n,2,2)$ for $n\in\{4,5\}$, $(1,4,3,2)$, $(1,4,2,2,1)$ and the Hilbert functions $(1,n,H_2,\ldots,H_c,1,\ldots,1)$ with at least $c$ $1$'s at the tail. Algebras with these Hilbert functions belong to non-elementary components of $\Hilb_d(\mathbb{A}^n)$ by Theorem \ref{Shafarevich}, Propositions \ref{HF:144,154} and \ref{HF:145}, Proposition \ref{smooth1nr1}, Proposition \ref{Hf:1n22}, Proposition \ref{HF:1432}, Proposition \ref{Sm14221} and Theorem \ref{rayflat}.

    Smoothability in the case of socle dimension 2 follows from the same results, only Corollary \ref{rayflat_socledim2} should be used instead of Theorem \ref{rayflat}.\\
   
\end{proof}

\section{Grassmann cactus variety}\label{GrasCactus}
In this section, as a crucial outcome of previous sections, we establish that certain cactus varieties of pencils are irreducible.\\

Considering the general problem of simultaneous decomposition of a family of polynomials, for a projective variety $X\subset \PP^N$ the $(r,k)-$Grassmann secant variety $\sigma_{r,k}(X)\subset  \Gr(k,\mathbb{K}^{N+1})$ is the closure of the locus of $k$-dimensional subspaces contained in the span of $r$ points.
The case $k=1$ recovers the most studied case, the $r$-the secant variety $\sigma_r(X)$.

On the way to understand the barrier in finding the equations of the secant variety, the $k$-cactus variety is defined as
$$ \mathfrak{K}_r(X)=\overline{\bigcup_{R\subset X} \langle R\rangle}= \overline{\bigcup \lbrace\langle R\rangle\ :\ R\in \Hilb^{Gor}_{\leq r}(X)\rbrace},$$
where the sum is over all finite subschemes $R\subset X$ of length at most $r$. See \cite[Section~2]{JW} for comprehensive details and a proof of the second equality.

A generalization of the Grassmann secant variety is defined as the Grassmann cactus variety $\mathfrak{K}_{r,k}(X)\subset \Gr(r,\KK^{N+1})$, described as
    $$\mathfrak{K}_{r,k}(X)=\overline{\lbrace [V]\in \Gr(k,\KK^{N+1}):\ \PP(V)\subset \langle R\rangle,\ R \subset X,\ \dim H^0(R, \mathcal{O}_R)\leq r \rbrace}.$$
Obviously, $\sigma_{r,k}(X)\subset \mathfrak{K}_{r,k}(X)$. We expect that the Grassmann cactus variety $\mathfrak{K}_{r,k}(X)$ exhibits similar, though weaker, ``barrier'' properties as the standard cactus variety. \\

The key point is that $\mathfrak{K}_{r,k}$ depends only on the algebras of socle dimension at most $k$.
\begin{definition}
  Let $r$ and $k$ be positive integers, and let $X$ be a variety. We define
  \begin{align*}
      \Hilb^{\leq k}_{\leq r}(X) = \{ [R] \in \Hilb_{\leq r}(X):\ & R = \Spec A,\ A \cong A_1\times\cdots\times  A_m,
      \\ &\text{ every } A_i \text{ is a local algebra of socle dimension at most } k \}.
  \end{align*}
  It is an open subscheme of $\Hilb_{\leq r}$. By \cite[Theorem 1]{bbg} or \cite[Proposition 4.2]{JK22}  we have:
\end{definition}
\begin{theorem}
\label{theorem:cactus_vs_hilbert}
If $X$ is a smooth projective variety, then    
\begin{equation*}
    \mathfrak{K}_{r,k}(X) = \overline{ \{ [V] \in \Gr(k,\KK^{N+1}) : \PP V \subset \langle R \rangle,\ R \in \Hilb^{\leq k}_{\leq r}(X) \}}.
\end{equation*}
\end{theorem}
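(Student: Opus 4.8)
The plan is to prove the two inclusions of the claimed equality separately. Write $\mathcal{G}=\{[V]\in \Gr(k,\KK^{N+1}) : \PP V\subseteq \langle R\rangle \text{ for some } R\in \Hilb^{\leq k}_{\leq r}(X)\}$, so that the assertion becomes $\mathfrak{K}_{r,k}(X)=\overline{\mathcal{G}}$. The inclusion $\overline{\mathcal{G}}\subseteq \mathfrak{K}_{r,k}(X)$ is immediate and I would dispatch it first: any $R\in \Hilb^{\leq k}_{\leq r}(X)$ is a finite subscheme of $X$ with $\dim_{\KK} H^0(R,\mathcal{O}_R)=\deg R\leq r$, so every $[V]$ with $\PP V\subseteq\langle R\rangle$ lies in the defining locus of $\mathfrak{K}_{r,k}(X)$; passing to closures gives the inclusion.

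For the reverse inclusion, since $\mathfrak{K}_{r,k}(X)$ is by definition the closure of $D:=\{[V]:\PP V\subseteq\langle R\rangle,\ R\subseteq X \text{ finite},\ \dim_{\KK} H^0(\mathcal{O}_R)\leq r\}$, it suffices to show $D\subseteq\mathcal{G}$. So I would fix $[V]\in D$ witnessed by a finite $R\subseteq X$, and among all subschemes $R'\subseteq R$ with $\PP V\subseteq\langle R'\rangle$ pick one, $R_0$, of least length; then $\deg R_0\leq\deg R\leq r$ and $R_0$ is inclusion-minimal with the property $\PP V\subseteq\langle R_0\rangle$. The key claim to establish is that every connected component of $R_0$ has socle dimension at most $k$: granting it, the canonical factorization $\mathcal{O}_{R_0}\cong\prod_i A_i$ over the support points exhibits $R_0$ as a point of $\Hilb^{\leq k}_{\leq r}(X)$ with $\PP V\subseteq\langle R_0\rangle$, i.e. $[V]\in\mathcal{G}$.

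To prove the claim I would fix $p$ in the support of $R_0$ and write $A=\mathcal{O}_{R_0,p}=\hat{R}/I$, which is a well-defined presentation over a regular local ring $\hat{R}=\KK[[\alpha_1,\dots,\alpha_n]]$ with $n=\dim_p X$, precisely because $X$ is smooth at $p$. By Macaulay's duality (the Theorem in Subsection~\ref{Macaulay's Inverse System}), $\tau(A)=\dim_{\KK}\Soc(A)$ equals the minimal number of generators of the inverse system $I^\perp$. Translating $\PP V\subseteq\langle R_0\rangle$ into apolarity, the $k$-dimensional space $V$ imposes at most $k$ independent linear conditions on the functionals supported at $p$; hence if $\dim_{\KK}\Soc(A)\geq k+1$ one can choose a proper submodule of $I^\perp$ with one fewer minimal generator still carrying those functionals, and the inclusion-reversing bijection of Macaulay's duality converts this into a proper subscheme $(R_0')_p\subsetneq (R_0)_p$ such that the scheme $R_0'$ obtained by replacing the component at $p$ still satisfies $\PP V\subseteq\langle R_0'\rangle$ — contradicting minimality. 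This reduction is exactly the content of \cite[Theorem~1]{bbg} (see also \cite[Proposition~4.2]{JK22}), which also takes care of the bookkeeping when the spans of the components of $R_0$ are not in general position and when $R_0$ fails to impose independent conditions on $H^0(\mathcal{O}_{\PP^N}(1))$, so I would invoke it rather than reprove it.

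The hard part is precisely that last step: making rigorous the passage from ``$V$ sees at most $k$ of the socle directions of $A$'' to ``$I^\perp$ can be generated by fewer elements without losing $V$'', i.e. reconciling the linear-span/apolarity dictionary for a finite subscheme embedded by a fixed, possibly non-complete, linear system with Macaulay's inverse-system duality for its local ring at $p$; smoothness of $X$ enters exactly to present $A$ as a quotient of a regular local ring so that this duality is available. The remaining ingredients — the easy inclusion and the two passages to closures — are routine, so the proof reduces to citing \cite{bbg,JK22} for the local shrinking argument and assembling the inclusions as above.
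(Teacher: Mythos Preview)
The paper does not give its own proof of this statement: it is stated as a consequence of \cite[Theorem 1]{bbg} and \cite[Proposition 4.2]{JK22}, with no further argument. Your proposal ultimately does the same thing --- you reduce to and invoke exactly those two references for the nontrivial direction --- so you are aligned with the paper; the additional explanatory scaffolding you provide (the easy inclusion, passing to a minimal $R_0$, and the socle-dimension heuristic via Macaulay duality) is a reasonable outline of what happens in those references, but it is not something the paper itself supplies or needs.
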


By the above description, using the language of Hilbert schemes, the problem of finding the smallest $r$ where $\sigma_{r,k}\neq \mathfrak{K}_{r,k}$ translates into the problem of finding the smallest $r$ such that the locus of subschemes of length $r$ and socle dimension at most $k$, denoted by $\Hilb^{ \leq k}_{r}(\mathbb{A}^n)$, is not contained in the smoothable component $\HilbFuncSm{r}{n}$. \\

By the results of \cite{JW}, it is known that for $r\leq 13$ and $X$ smooth projective, the cactus and secant variety are identical. The first extremal case $\mathfrak{K}_{14}(\nu_d(\PP^n))$ is studied in \cite[Section~5]{gmr} where it is shown that $\mathfrak{K}_{14}(\nu_d(\PP^n))$ has one more component than $\sigma_{14}(\nu_d(\PP^n))$, for $ n,d\geq 6$. Moreover, from the results of \cite{ccvv} and of \cite[Section~6]{gmr}, it follows that $r=8$ and $n\geq 4$ is the first case where $\sigma_{8,3}(\nu_d(\PP^n))\neq \mathfrak{K}_{8,3}(\nu_d(\PP^n))$.\\

The intermediate case $k=2$ is not known. By \cite{JW}, we know that all Gorenstein (socle dimension $1$) algebras of degree $r\leq 13$ are smoothable. In \cite{JK22}, the authors prove that $\Hilb^{\leq 2}_r(\mathbb{A}^n,0)$, parametrizing all local subschemes of length $r$ and socle dimension at most $2$ supported at the origin, is of dimension bounded from above by $(r-1)(n-1)$ for $r\leq 11$, though they believe this bound also holds true for $r=12$. On the other hand, there are examples of non-smoothable algebras of degree $r=13$  and socle dimension $2$ in hand, given by Hilbert function $(1,5,6,1)$. Moreover, by results of \cite{ccvv}, any local algebra of length at most $8$ and socle dimension $2$ is smoothable. 
Therefore,  we can look for an example of $\sigma_{r,2}(\nu_d(\PP^n))\neq \mathfrak{K}_{r,2}(\nu_d(\PP^n))$  among $r$ with $9\leq r\leq 13$.

As a consequence of Theorem  \ref{mainTh} showing that all local algebras of degree $r=9,10$ and socle dimension $2$ are smoothable, the following result improves this bound to $11\leq r\leq 13$.

 \begin{theorem}
    For $r=9,10$ and $n\geq 4$ and any $d \geq 1$, the Grassmann cactus variety $\mathfrak{K}_{r,2}(\nu_d(\PP^n))$  is irreducible and coincides with $\sigma_{r,2}(\nu_d(\PP^n))$.
\end{theorem}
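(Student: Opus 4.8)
The plan is to deduce the theorem directly from Theorem~\ref{theorem:cactus_vs_hilbert} together with the smoothability statement in Theorem~\ref{mainTh}. Recall that Theorem~\ref{theorem:cactus_vs_hilbert} expresses $\mathfrak{K}_{r,2}(\nu_d(\PP^n))$ as the closure of the locus of subspaces $\PP V \subseteq \langle R\rangle$ where $[R]$ ranges over $\Hilb^{\leq 2}_{\leq r}(\nu_d(\PP^n))$, i.e. over finite subschemes of $\nu_d(\PP^n)$ whose coordinate ring is a product of local algebras each of socle dimension at most $2$. The key observation is that $\nu_d(\PP^n)$ is a smooth projective variety, so its Hilbert scheme of finite subschemes embeds naturally, and a length-$r$ subscheme of $\nu_d(\PP^n)$ with $r \in \{9,10\}$ is abstractly a disjoint union of local pieces of total degree $\leq r$.

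First I would recall the structure of $\Hilb^{\leq 2}_{\leq r}(X)$ for $r \leq 10$: a point is $\Spec A$ with $A \cong A_1 \times \cdots \times A_m$, each $A_i$ local of socle dimension $\leq 2$ and $\sum \deg A_i \leq r \leq 10$. By Theorem~\ref{mainTh}, every local algebra of degree $\leq 10$ and socle dimension $\leq 2$ is smoothable (for degrees $\leq 8$ this is already in \cite{ccvv}; for degrees $9$ and $10$ this is the new content of Theorem~\ref{mainTh}; the Gorenstein, i.e. socle dimension $1$, case is covered by Theorem~\ref{Gorenstein} as well). Since smoothability of each local factor $A_i$ implies smoothability of the product $A_1 \times \cdots \times A_m$ — a disjoint union of smoothable subschemes deforms to a union of distinct reduced points — every $[R] \in \Hilb^{\leq 2}_{\leq r}(\AA^n)$ lies in the smoothable component $\HilbFuncSm{r}{n}$. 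Applying this locally on $\nu_d(\PP^n)$ (smoothability is a local and intrinsic property of the subscheme), the entire locus $\Hilb^{\leq 2}_{\leq r}(\nu_d(\PP^n))$ is contained in the closure of the locus of $r$ distinct points of $\nu_d(\PP^n)$.

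Next I would use this containment in the formula of Theorem~\ref{theorem:cactus_vs_hilbert}: the defining locus of $\mathfrak{K}_{r,2}(\nu_d(\PP^n))$ is contained in the closure of $\{[V] : \PP V \subseteq \langle R\rangle,\ R \text{ a reduced } r\text{-point scheme}\}$, and the closure of the latter is exactly $\sigma_{r,2}(\nu_d(\PP^n))$ by definition of the Grassmann secant variety. Hence $\mathfrak{K}_{r,2}(\nu_d(\PP^n)) \subseteq \sigma_{r,2}(\nu_d(\PP^n))$. The reverse inclusion $\sigma_{r,2} \subseteq \mathfrak{K}_{r,2}$ always holds. Therefore the two varieties coincide, and since $\sigma_{r,2}(\nu_d(\PP^n))$ is irreducible (being the closure of the image of an irreducible parameter space — a fibration over the symmetric product $\mathrm{Sym}^r(\PP^n)$, or more directly the closure of an irreducible incidence variety), so is $\mathfrak{K}_{r,2}(\nu_d(\PP^n))$.

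The only real subtlety — and the step I would treat most carefully — is the passage from algebras supported at the origin in $\AA^n$ to finite subschemes of the projective variety $\nu_d(\PP^n)$, and the verification that the ``product of smoothable local algebras is smoothable'' step is legitimate at the level of Hilbert schemes of a smooth variety rather than just affine space; this is standard (smoothability is étale-local, and $\nu_d(\PP^n)$ is smooth, so one works in an affine chart $\AA^n$, and disjointly supported components smooth out independently), but worth stating explicitly. One should also note that the degree-$\leq 10$ hypothesis is used in full: it is what guarantees that \emph{every} local factor appearing in a length-$\leq 10$ scheme of socle dimension $\leq 2$ is covered by Theorem~\ref{mainTh}. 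This completes the proof.
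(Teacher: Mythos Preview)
Your proposal is correct and follows essentially the same approach as the paper: use Theorem~\ref{mainTh} to show that every finite scheme in $\Hilb^{\leq 2}_{\leq r}$ (for $r\le 10$) is smoothable, pass from $\AA^n$ to the smooth variety via \'etale-local identification, and then invoke Theorem~\ref{theorem:cactus_vs_hilbert} to conclude $\mathfrak{K}_{r,2}=\sigma_{r,2}$. The paper in fact states this result as the special case $X=\nu_d(\PP^n)$ of the next, more general theorem, whose short proof is the same argument you give, phrased as ``$\Hilb^{\leq 2}_{\leq r}(\AA^n)$ is irreducible, hence so is $\Hilb^{\leq 2}_{\leq r}(X)$.''
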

This is a particular case of the following theorem.
\begin{theorem}
   Let $X \subset \PP^N$ be a projective smooth non-degenerate variety. For $r = 9,10$, the Grassmann cactus variety $\mathfrak{K}_{r,2}(X)$ is irreducible and coincides with $\sigma_{r,2}(X)$.
\end{theorem}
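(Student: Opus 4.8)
The plan is to deduce the statement from Theorem~\ref{mainTh} through the Hilbert-scheme description of Grassmann cactus varieties. First I would record the automatic inclusion $\sigma_{r,2}(X)\subseteq\mathfrak{K}_{r,2}(X)$ and the (classical) irreducibility of $\sigma_{r,2}(X)$: it is the closure of the image of the locus $\{(p_1,\dots,p_r,[V])\in X^r\times\Gr(2,\KK^{N+1}):\PP V\subseteq\langle p_1,\dots,p_r\rangle\}$, which over the dense open subset of $X^r$ where $\langle p_1,\dots,p_r\rangle$ has maximal dimension is a Grassmann bundle over an irreducible base. Thus the whole content is the reverse inclusion $\mathfrak{K}_{r,2}(X)\subseteq\sigma_{r,2}(X)$. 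By Theorem~\ref{theorem:cactus_vs_hilbert}, $\mathfrak{K}_{r,2}(X)$ is the closure of the set of $[V]\in\Gr(2,\KK^{N+1})$ such that $\PP V\subseteq\langle R\rangle$ for some $R\in\Hilb^{\leq 2}_{\leq r}(X)$, so it is enough to prove that every such $R$ is a flat limit of reduced subschemes of $X$ of length $\deg R\le r$.

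This smoothability claim is the crux, and it is exactly where Theorem~\ref{mainTh} enters. Writing $R=\Spec(A_1\times\cdots\times A_m)$ with each $A_i$ local of socle dimension $\le 2$ and supported at a point $p_i\in X$ (the $p_i$ distinct): since $X$ is smooth, every $\Spec A_i$ is contained in a chart isomorphic to $\AA^{\dim X}$, so $A_i$ has embedding dimension $\le\dim X$, and $\deg A_i\le\deg R\le r\le10$. Then each $A_i$ is smoothable: for $\deg A_i\le 8$ this is classical (by \cite{ccvv}; in degree $8$ the only non-smoothable component parametrizes algebras of socle dimension $3$), for $\deg A_i\in\{9,10\}$ with $A_i$ Gorenstein by Theorem~\ref{Gorenstein}, and for $\deg A_i\in\{9,10\}$ of socle dimension $2$ by the ``moreover'' part of Theorem~\ref{mainTh}. (If $\dim X\le3$ one could instead invoke Theorem~\ref{11points}.) Smoothing the $\Spec A_i$ independently in disjoint charts and gluing presents $R$ as a flat limit of tuples of $\deg R$ distinct points of $X$.

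It then remains to pass from the smoothability of $R$ to the inclusion of linear spans, which is routine. Given $[V]$ with $\PP V\subseteq\langle R\rangle$, fix a flat family $\mathcal R\to T$ over a smooth pointed curve with $\mathcal R_0=R$ and, for $t\ne0$, $\mathcal R_t$ a tuple of $\ell:=\deg R$ distinct points of $X$. Shrinking $T$ so that $\langle\mathcal R_t\rangle$ has constant projective dimension for $t\ne0$, properness of the corresponding Grassmannian lets these spans converge to a linear subspace $\Lambda_0$; since containment of a subscheme in a linear space is a closed condition, $R\subseteq\Lambda_0$, hence $\PP V\subseteq\langle R\rangle\subseteq\Lambda_0$. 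As $\Lambda_0$ is a flat limit of spans of $\ell$ distinct points of $X$, any line contained in it lies in $\sigma_{\ell,2}(X)\subseteq\sigma_{r,2}(X)$ because $\ell\le r$. Hence $[V]\in\sigma_{r,2}(X)$; together with the reverse inclusion and the irreducibility of $\sigma_{r,2}(X)$ this gives the theorem.

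The genuine difficulty of the statement is entirely absorbed into Theorem~\ref{mainTh} --- the smoothability of the degree-$9$ and degree-$10$ local algebras of socle dimension $2$, established earlier via the variety of commuting matrices. The remainder is the now-standard ``cactus variety equals secant variety once the relevant finite schemes are smoothable'' mechanism of \cite{JW} and \cite{gmr}. The only technical subtlety to keep track of is that the projective dimension of $\langle\mathcal R_t\rangle$ may jump down in the limit $t\to0$; this forces one to shrink the base to make that dimension constant and to extract $\Lambda_0\supseteq\langle R\rangle$ in the appropriate Grassmannian by properness, instead of naively taking the limit inside a fixed Grassmann bundle.
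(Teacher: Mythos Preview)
Your proposal is correct and follows essentially the same route as the paper: both reduce the statement to Theorem~\ref{mainTh} via Theorem~\ref{theorem:cactus_vs_hilbert}. The paper's proof is slightly more streamlined in presentation: instead of arguing smoothability scheme-by-scheme and then running a span-limit argument, it observes in one stroke that $X$ is \'etale-locally $\AA^{\dim X}$, so the irreducible components of $\Hilb^{\leq 2}_{\leq r}(X)$ biject with those of $\Hilb^{\leq 2}_{\leq r}(\AA^{\dim X})$; the latter is irreducible by Theorem~\ref{mainTh}, hence so is the former, and since its general point is a reduced tuple the closure in Theorem~\ref{theorem:cactus_vs_hilbert} is exactly $\sigma_{r,2}(X)$. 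Your explicit decomposition into local pieces and the flat-limit-of-spans argument unpack the same content, and your handling of the possible dimension drop of $\langle\mathcal{R}_t\rangle$ at $t=0$ via properness of the Grassmannian is the right thing to do.
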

\begin{proof}
  Let $n = \dim X$. The variety $X$ is \'etale-locally isomorphic to the affine space $\AA^n$, hence there is a bijection of the irreducible components of $\Hilb^{\leq 2}_{\leq r}(X)$ and $\Hilb^{\leq 2}_{\leq r}(\AA^n)$. For $r = 9, 10$, the scheme $\Hilb^{\leq 2}_{\leq r}(\AA^n)$ is irreducible, hence we get that $\Hilb^{\leq 2}_{\leq r}(X)$ is irreducible, and this means that the smoothable component is the only component. Therefore, by Theorem~\ref{theorem:cactus_vs_hilbert}, we conclude that $\mathfrak{K}_{r,2}(X)$ is irreducible and is equal to $\sigma_{r,2}(X)$.
\end{proof}
  {\small
\bibliographystyle{alpha}
\bibliography{biblio}}
\end{document}